\newcommand{\vertiii}[1]{{\left\vert\kern-0.25ex\left\vert\kern-0.25ex\left\vert #1 
    \right\vert\kern-0.25ex\right\vert\kern-0.25ex\right\vert}}
\theoremstyle{plain}
\newtheorem{proposition}{Proposition}[section]
\newtheorem{corollary}[proposition]{Corollary}
\newtheorem{lemma}[proposition]{Lemma}
\newtheorem{theorem}[proposition]{Theorem}
\theoremstyle{definition}
\newtheorem{remark}[proposition]{Remark}
\newcommand{\B}{\mathscr{B}}
\newcommand{\BB}{\mathcal{B}}
\newcommand{\D}{{\mathbb D}}
\renewcommand{\H}{\mathcal{H}}
\newcommand{\N}{{\mathbb N}}
\def\Ap2{A^p_{\omega_p}}
\def\Lp2{L^p_{\omega_p}}
\def\om{\omega}
\newcommand{\R}{{\mathbb R}}
\newcommand{\Q}{{\mathbb Q}}
\def\om{\omega}
\def\SW{\mathcal{SW}}
\def\b{\beta}
\newcommand{\opnorm}{\@ifstar\@opnorms\@opnorm}
\newcommand{\@opnorms}[1]{%
  \left|\mkern-1.5mu\left|\mkern-1.5mu\left|
   #1
  \right|\mkern-1.5mu\right|\mkern-1.5mu\right|
}
\newcommand{\@opnorm}[2][]{%
  \mathopen{#1|\mkern-1.5mu#1|\mkern-1.5mu#1|}
  #2
  \mathclose{#1|\mkern-1.5mu#1|\mkern-1.5mu#1|}
}
\newcommand{\boldopnorm}{\@ifstar\@boldopnorms\@boldopnorm}
\newcommand{\@boldopnorms}[1]{%
  \pmb{\left|}\mkern-1.5mu\pmb{\left|}\mkern-1.5mu\pmb{\left|}
   #1
  \pmb{\right|}\mkern-1.5mu\pmb{\right|}\mkern-1.5mu\pmb{\right|}
}
\newcommand{\@boldopnorm}[2][]{%
  \mathopen{#1\pmb{|}\mkern-1.5mu#1\pmb{|}\mkern-1.5mu#1\pmb{|}}
  #2
  \mathclose{#1\pmb{|}\mkern-1.5mu#1\pmb{|}\mkern-1.5mu#1\pmb{|}}
}
\numberwithin{equation}{section}
\theoremstyle{plain} % just in case the style had changed
\newcommand{\thistheoremname}{}
\newtheorem*{genericthm*}{\thistheoremname}
\newenvironment{namedthm*}[1]
{\renewcommand{\thistheoremname}{#1}%
	\begin{genericthm*}}
	{\end{genericthm*}}
\theoremstyle{definition} % just in case the style had changed
\newcommand{\thisdefinitionname}{}
\newtheorem*{genericdefinition*}{\thisdefinitionname}
\newenvironment{nameddefinition*}[1]
{\renewcommand{\thisdefinitionname}{#1}%
	\begin{genericdefinition*}}
	{\end{genericdefinition*}}
\begin{document}

\title [Words of analytic paraproducts 
on Bergman spaces]
{Words of analytic paraproducts
on Bergman spaces induced by smooth rapidly decreasing weights}

\author[C. Cascante]{Carme Cascante}
\address{C. Cascante: Departament de Matem\`atiques i
    Inform\`atica, Universitat  de Barcelona,
     Gran Via 585, 08071 Barcelona, Spain}
\email{cascante@ub.edu}
\author[J. F\`abrega]{Joan F\`abrega}
\address{J. F\`abrega: Departament de Matem\`atiques i
    Inform\`atica, Universitat  de Barcelona,
     Gran Via 585, 08071 Barcelona, Spain}
\email{joan$_{-}$fabrega@ub.edu}
\author[D. Pascuas]{Daniel Pascuas}
\address{D. Pascuas: Departament de Matem\`atiques i
    Inform\`atica, Universitat  de Barcelona,
     Gran Via 585, 08071 Barcelona, Spain}
\email{daniel$_{-}$pascuas@ub.edu}
\author[J. A. Pel\'aez]{Jos\'e \'Angel Pel\'aez}
\address{J. A. Pel\'aez: Departamento de An\'alisis Matem\'atico, Universidad de M\'alaga, Campus de
Teatinos, 29071 M\'alaga, Spain} \email{japelaez@uma.es}

\thanks{
The research of the first, second, and third authors
	was supported in part by
        Ministerio de Ciencia e Innovaci\'{o}n, 
        Spain, project  PID2021-123405NB-I00.  The first author is also supported by the Spanish State Research Agency, through the Severo Ochoa and Mar\'{\i}a de Maeztu Program for Centers and Units of Excellence in R\&D 
(CEX2020-001084-M), and by Departament de Recerca i Universitats  grant SGR 2021-00087.
The research of the fourth author was supported in part by Ministerio de Ciencia e Innovaci\'on, Spain, project  PID2022-136619NB-I00; La Junta de Andaluc{\'i}a,
project FQM210.}
\date{\today}

\subjclass[2020]{30H20, 30H30, 47G10.}

\keywords{Analytic paraproduct, iterated composition of operators, weighted Bergman spaces, smooth rapidly decreasing weights.}

\date{\today}

\begin{abstract} 
For a fixed analytic function g on the unit disc, we consider the analytic paraproducts induced by g, which are formally defined by 
$T_gf(z)=\int_0^zf(\zeta)g'(\zeta)d\zeta$,  $S_gf(z)=\int_0^zf'(\zeta)g(\zeta)d\zeta$, and $M_gf(z)=g(z)f(z)$.   An  $N$-letter $g$-word is an operator of the form $L=L_1\cdots L_N$, where each $L_j$ is either $M_g$, $S_g$ or $T_g$. 
It has been recently proved in \cite{Aleman:Cascante:Fabrega:Pascuas:Pelaez,Aleman:Cascante:Fabrega:Pascuas:Pelaez2} that understanding the boundedness of  a $g$-word on classical Hardy and Bergman spaces is a challenging problem due to the potential cancellations involved.
Our main result provides a complete quantitative characterization of the boundedness of an arbitrary $g$-word on a  weighted Bergman space  $A^p_{\om^{p/2}}$,  where $\om=e^{-2\varphi}$ is a smooth rapidly decreasing weight. In particular, it
 states that any   $N$-letter $g$-word   such that $\#\{j:L_j=T_g\}=n\ge 1$ is bounded on $A^p_{\om^{p/2}}$
if and only if $g$ satisfies the "fractional" Bloch-type condition 
\[
\|g\|_{\B^s_\varphi}^s:=
\sup_{z\in\D}\frac{s|g(z)|^{s-1}|g'(z)|}{1+\varphi'(|z|)}<\infty,
\]
 where $s=\frac{N}{n}$, and  $\|L\|_{A^p_{\om^{p/2}}}\simeq \|g\|_{\B^s_\varphi}^N$.

The class of smooth rapidly decreasing weights contains
 the radial weights
\begin{equation*}
\omega_n(z)=e^{-2\exp_{n}(g_{\alpha,c}(|z|))},
\quad\mbox{where}\quad g_{\alpha,c}(r)=\tfrac{c}{(1-r^2)^{\alpha}},
\quad\mbox{for $c,\alpha>0$,}
\end{equation*} 
$\exp_0(x)=x$ and  $\exp_n(x)=e^{\exp_{n-1}(x)}$, for $n\in\N$. Therefore it contains
 weights which decrease arbitrarily rapidly  to zero as $|z|\to 1^-$. 

\end{abstract}
\maketitle

\noindent
\section{Introduction}
Let $\H(\D)$ denote the space of analytic functions on the unit disc $\D$ of the complex plane.  
For a non-negative function $\om\in L^1([0,1))$ such that  $\int_r^1 \omega(s)\,ds>0$ for any $r\in [0,1)$,
the extension to $\D$ defined by $\om(z)=\om(|z|)$ 
 is called a radial weight.  For $0<p<\infty$ and a radial weight $\omega$, the weighted Bergman
space $A^p_\omega$ consists of those $f\in\H(\D)$ for which
    $$
    \|f\|_{A^p_\omega}^p=\int_\D|f(z)|^p\omega(z)\,dA(z)<\infty,\index{$\Vert\cdot\Vert_{A^p_\omega}$}
    $$
where $dA(z)=\frac{dx\,dy}{\pi}$ is the normalized Lebesgue area measure on $\D$. The condition
$\int_r^1 \omega(s)\,ds>0$,  $r\in [0,1)$,  is justified by the fact that $A^p_\omega=\H(\D)$ whenever it does not hold.
As usual,  we write $A^p_\alpha$ for the Bergman space induced by the  standard weight 
$\omega(z)=(\alpha+1)(1-|z|^2)^\alpha$, $\alpha>-1$.  

For any $g\in\H(\D)$, we consider the {\em$g$-analytic paraproducts} $M_g,T_g,S_g:\H(\D)\to\H(\D)$ defined by
$M_gf= fg$,
\[
T_gf(z)= \int_0^z f(\zeta)g'(\zeta)\,d\zeta 
\quad\mbox{and}\quad
S_gf(z)= \int_0^z f'(\zeta)g(\zeta)\,d\zeta.
\]
The boundedness  of $g$-analytic paraproducts has been studied on many spaces of analytic functions since the seminal papers  \cite{AC,AS,AS2,Duren:Romberg:Shields}, where the authors described  
their action on
classical Hardy spaces %$H^p$ 
and standard Bergman spaces. %$A^p_\alpha$. 
Going further, motivated by understanding meaningful cancelation phenomena, it has been recently considered 
the  boundedness of compositions (products) of analytic paraproducts acting on them \cite{Aleman:Cascante:Fabrega:Pascuas:Pelaez,Aleman:Cascante:Fabrega:Pascuas:Pelaez2}.
We recall that  
 an {\em $N$-letter $g$-word} is an operator of the form $L=L_1\cdots L_N$, where each $L_j$ is either $M_g$, $S_g$ or $T_g$. 
 In  \cite{Aleman:Cascante:Fabrega:Pascuas:Pelaez2},
among other results, it is obtained a characterization of the symbols $g$ such that an $N$-letter $g$-word is bounded on $H^p$  and on $A^p_\alpha$, which only depends on the number of appearances of each of the letters
$T_g$, $S_g$ and $M_g$ in the given word.

In this paper we are interested in describing  the boundedness of
$N$-letter $g$-words in the setting of Bergman spaces $A^p_{\om_p}$, $\om_p=\om^{p/2}$, where $\om$ is a smooth rapidly decreasing weight. 
A radial weight $\omega$ is {\em smooth rapidly decreasing} if $\omega=e^{-2\varphi}$, where $\varphi$  satisfies the following conditions:

\begin{enumerate}[label={\sf(\alph*)}, topsep=3pt, leftmargin=32pt,itemsep=3pt] 
	
	\item\label{rapidly:decreasing:condition:a} 
	$\varphi$ is a radial positive $C^2$ function on $\D$ which is increasing on $[0,1)$ and satisfies that
	$\Delta\varphi>0$ on $\D$, where $\Delta$ denotes the standard Laplace operator.
	
	\item\label{rapidly:decreasing:condition:b}  $\left(\Delta\varphi(z)\right)^{-1/2}\simeq\tau(|z|)$, where
	$\tau$ is a positive decreasing $C^1$ function on $[0,1)$
	such that $\lim_{r\to 1^-}\tau(r)=\lim_{r\to 1^-}\tau'(r)=0$.
	
	\item\label{rapidly:decreasing:condition:c}
	There exists  a constant $C>0$  such that either
	$\tau(r)(1-r)^{-C}$ increases for $r$ close to $1$ or
	\begin{equation}\label{eqn:condition:tauprime}
	\lim_{r\to 1^-}\tau'(r)\log\frac{1}{\tau(r)}=0.
	\end{equation}
	\item\label{rapidly:decreasing:condition:e}
	$\varphi$ is convex on $[0,1)$ and there is $\eta>0$ such that the function $(1+\varphi') \tau^\eta$ is essentially decreasing in $[0,1)$. Recall that a function $h:\,[0,1)\to \mathbb{R}$ is {\em essentially decreasing} when
	$h(s)\lesssim h(t)$, for $0\le t\le s<1$.
\end{enumerate}

Here, as usual, for two non-negative functions $A$ and $B$, 
$A\lesssim B$ ($B\gtrsim A$) means that there is a finite positive constant $C$, independent of the variables involved, which satisfies $A\le C\,B$. Moreover, we write $A\simeq B$  when $A\lesssim B$ and $B\lesssim A$.

The class of smooth rapidly decreasing weights is denoted by $\SW$.
Obviously it is enough that  the radial positive $C^2$ function $\varphi$ on $\D$ satisfies the four conditions \ref{rapidly:decreasing:condition:a}-\ref{rapidly:decreasing:condition:e} on an interval $[r_0,1)$, for some $r_0\in [0,1)$, because then there exists $\nu\in \SW$ such that $A^p_{\om^p}=A^p_{\nu^p}$, $0<p<\infty$.
 We recall that, in the literature, the weights $\omega=e^{-2\varphi}$ satisfying the three conditions \ref{rapidly:decreasing:condition:a}-\ref{rapidly:decreasing:condition:c} are called {\em rapidly decreasing} (see \cite{Pau-Pelaez:JFA2010,CasFabPasPelJFA2024}).

\medskip

The class $\SW$ 
contains
 weights which decrease arbitrarily rapidly  to zero as $r\to 1^-$. Namely, if $\exp_0(x)=x$ and  $\exp_n(x)=e^{\exp_{n-1}(x)}$, for $n\in\N$,  then, 
for each $n\in \N_0$, the radial weight
\begin{equation}\label{eqn:examples:intro}
\omega_n(r)=e^{-2\exp_{n}(g_{\alpha,c}(r))},
\quad\mbox{where}\quad g_{\alpha,c}(r)=\tfrac{c}{(1-r^2)^{\alpha}},
\quad\mbox{for $c,\alpha>0$,}
\end{equation}
belongs to $\SW$ (see Corollary \ref{cor:examples}). We note that any of these weights satisfies condition \eqref{eqn:condition:tauprime}.

On the other hand, the
class  $\SW$ does not include the standard weights, but it
includes weights which decrease to zero slightly quicker than any standard weight $(\beta+1)(1-|z|^2)^\beta$, $\beta>0$. In fact,  
for each $\alpha>1$ there exists a radial positive $C^2$ function $\varphi_\alpha$ on $\D$ such that $\varphi_\alpha(r)=\left( \log\frac{e}{1-r^2}\right)^\alpha$ on  some interval
$[r_\alpha, 1)$, $r_\alpha\in [0,1)$, and
 \begin{displaymath}
\omega(z)=\exp
\left(-2\varphi_\alpha(|z|)\right)
\end{displaymath}
belongs to $\SW$.

\medskip

Let us recall that $S_g$ and $M_g$ are bounded on  $A^p_{\om_p}$ if and only if $g\in H^\infty$ and 
$\|M_g\|_{A^p_{\om_p}}\simeq  \|S_g\|_{A^p_{\om_p}}\simeq \|g\|_{H^\infty}$ \cite[Lemma~4.1]{CasFabPasPelJFA2024},
where as usual $H^\infty$ is the space of bounded analytic functions on $\D$.  
 As for the boundedness of the integration operator $T_g$, 
 by  \cite[(9.3)]{CP} (see also \cite{PavP})
for any $p,q\in(0,\infty)$ and $\omega=e^{-2\varphi}\in\SW$, we have
\begin{equation}\label{eq:LP2}
\|f\|^p_{A^p_{\om^q}}
\simeq |f(0)|^{p}+\|f'\|^p_{A^p_{\omega^{q} (1+\varphi')^{-p}}} \qquad(f\in\H(\D)).
\end{equation}
 This Littlewood-Paley type formula together with \eqref{eqn:tau:1+varphiprime:on:Ddeltaa} below,
allows to omit the hypotheses (6) in \cite[Theorem~2]{Pau-Pelaez:JFA2010} to obtain that $T_g\in\BB(A^p_{\omega_p})$ if and only if 
$g$ belongs to the Bloch-type space 
\[
\B_{\varphi}:=\left\{g \in\H(\D): \|g\|_{\B_\varphi}=\sup_{z\in\D}\frac{|g'(z)|}{1+\varphi'(|z|)}<\infty\right\}.
\]
 Moreover, $\|T_g\|_{A^p_{\omega_p}} \simeq \|g\|_{\B_{\varphi}}.$  

In order to deal with the case of $N$-letters $g$-words, $N\ge 2$, some definitions and notations are needed.
For a function $\psi:\D\to\R$ we define 
\begin{equation*}
	|\nabla\psi|(z)
	:=\limsup_{w\to z}\frac{|\psi(w)-\psi(z)|}{|w-z|}\in[0,\infty]
	\qquad(z\in\D).
\end{equation*}
The notation  $|\nabla\psi|(z)$ is justified by the fact that, when $\psi$ is differentiable at $z$, $|\nabla\psi|(z)$ is just the Euclidean norm $|\nabla\psi(z)|$ of the gradient of $\psi$ at $z$. 
In particular,  for $q\ge 1$ and $g\in\H(\D)$, we have that
$|\nabla|g|^q|=q|g|^{q-1}|g'|$.
Now, for any $q\ge1$  we introduce   new   Bloch-type classes of power functions 
$$\B_{\varphi}^q:=\{g\in\H(\D):\|g\|_{\B^q_\varphi}<\infty\},$$ where
\[
\|g\|_{\B^q_\varphi}^q:=
\sup_{z\in\D}\frac{|\nabla|g|^q|(z)}{1+\varphi'(|z|)}
\qquad(g\in\H(\D)).
\]
So  $\B_{\varphi}^1=\B_{\varphi}$ and, for $q\in \N$, we have that $g\in {\B^q_\varphi}$ if and only if $g^q\in \B_{\varphi}$.

Let $\N_0:=\N\cup\{0\}$, where, as usual, $\N$ denotes the set of positive integers. 
Let $\ell,m,n\in\N_0$ so that $N=\ell+m+n\ge 1$.
Then $W_g(\ell,m,n)$ is the set of all $g$-words of the form 
\begin{equation*}
L=L_1\cdots L_N,
\end{equation*} 
with $\#\{j:L_j=M_g\}=\ell$, $\#\{j:L_j=S_g\}=m$, and 
$\#\{j:L_j=T_g\}=n$. 

Throughout the manuscript  the space of bounded  linear operators on $A^p_\omega$ is denoted by $\BB(A_\omega^p)$, and for  any linear map  $L:\H(\D)\to\H(\D)$ we write
$\|L\|_{A_\omega^p}:=\sup \{\|Lf\|_{A_\omega^p}:\|f\|_{A_\omega^p}=1\}$. We refer to this quantity as the operator norm of $L$ on $A^p_\omega$, despite $A^p_\omega$ is not a normed space for $0 <p <1$.
 For any linear map $L:\H(\D)\to\H(\D)$, let
\[
\opnorm{L}_{A^p_{\omega_p}}
:=\sup\{\|Lf\|_{A^p_{\omega_p}}:f\in A^p_{\omega_p}(0),\,\|f\|_{A^p_{\omega_p}}\le1\},
\] 
where $A^p_{\omega_p}(0):=A^p_{\omega_p}\cap\H_0(\D)$ and 
$\H_0(\D):=\{f\in \H(\D):\,f(0)=0\}$.

Our main result provides a complete characterization of bounded $N$-letter $g$-words on $A^p_{\omega_{p}}$, for $\omega\in\SW$.

\begin{theorem}
\label{thm:sharp:estimate:norm:word}
Let be $\om=e^{-2\varphi}\in \SW$, $0<p<\infty$, $g\in\H(\D)$ and 
$L_g\in W_g(\ell,m,n)$, where  $\ell,m,n\in\N_0$ and $N=\ell+m+n\ge1$.
	\begin{enumerate}[label={\sf\alph*)},topsep=3pt, 
		leftmargin=0pt, itemsep=4pt, wide, listparindent=0pt, itemindent=6pt] 
\item \label{thm:sharp:estimate:norm:word:a}
If $n=0$, then $L_g$ is bounded on either $\Ap2$ or $\Ap2(0)$
if and only if $g\in H^\infty$. Moreover,
\begin{equation}
\label{eqn:sharp:estimate:norm:wordceroTg}
\|L_g\|_{\Ap2}\simeq\opnorm{L_g}_{\Ap2}
\simeq\|g\|^{N}_{H^\infty}.
\end{equation}
\item \label{thm:sharp:estimate:norm:word:b}
If $n\ge1$, then $L_g$ is bounded on either $\Ap2$ or $\Ap2(0)$ if and only $g\in \B_\varphi^s$, where $s=\frac{N}n=\frac{\ell+m}n+1$. Moreover,
\begin{equation}
\label{eqn:sharp:estimate:norm:word}
\|L_g\|_{\Ap2}\simeq\opnorm{L_g}_{\Ap2}
\simeq\|g\|^{N}_{\B_\varphi^s}.
\end{equation}
\end{enumerate}
 \end{theorem}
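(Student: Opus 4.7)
My plan is to prove each part separately, with part~\ref{thm:sharp:estimate:norm:word:a} being considerably easier. When $n=0$, every letter is bounded on $\Ap2$ with operator norm $\simeq\|g\|_{H^\infty}$ by \cite[Lemma~4.1]{CasFabPasPelJFA2024}, so submultiplicativity immediately gives the upper bound $\|L_g\|_{\Ap2}\lesssim\|g\|_{H^\infty}^N$. For the matching lower bound I would test $L_g$ against normalized peak functions $F_a\in\Ap2(0)$ concentrated near boundary points $a\in\D$ at which $|g(a)|$ is close to $\|g\|_{H^\infty}$; since $g$ is essentially constant on the pseudohyperbolic disc carrying the mass of $F_a$, mean-value approximations show that each letter $M_g$ or $S_g$ acts on $F_a$ as multiplication by $g(a)$ modulo admissible errors, so $L_gF_a\approx g(a)^NF_a$ and the required lower bound follows.

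For part~\ref{thm:sharp:estimate:norm:word:b} the upper bound is the delicate one. I would iterate the Littlewood--Paley formula \eqref{eq:LP2} exactly $n$ times---once per $T_g$-letter---to get
\[
\|L_gf\|_{\Ap2}^p \simeq \sum_{k=0}^{n-1}|(L_gf)^{(k)}(0)|^p
+ \bigl\|(L_gf)^{(n)}\bigr\|_{A^p_{\om^{p/2}(1+\varphi')^{-np}}}^p.
\]
The boundary terms at $0$ are controlled by reproducing-kernel pointwise estimates for $f$ together with pointwise data of $g,g',\dots$ at $0$. For $(L_gf)^{(n)}$ I would establish, by induction on $N$ and with judicious integration by parts (using $S_g=M_g-T_g$ on $\H_0(\D)$) to eliminate residual integrations, a Leibniz-type expansion whose dominant term is a constant multiple of $f\cdot g^{\ell+m}\cdot(g')^n$. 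The key algebraic observation is that the pointwise bound $|g'|\lesssim(1+\varphi')|g|^{1-s}\|g\|_{\B_\varphi^s}^s$, immediate from the definition of $\B_\varphi^s$ with $s=N/n$, collapses into
\[
|g|^{\ell+m}|g'|^n \lesssim (1+\varphi')^n\,\|g\|_{\B_\varphi^s}^N,
\]
since $(\ell+m)+n(1-s)=N-sn=0$ and $sn=N$; notice that this final bound has no singularity at the zeros of $g$, even though the intermediate step does. Inserting it into the weighted integral exactly cancels the factor $(1+\varphi')^{-np}$ produced by the iterated LP and yields $\|g\|_{\B_\varphi^s}^N\|f\|_{\Ap2}$. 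The remaining terms either carry higher derivatives $g^{(k)}$---which I would bound using Cauchy estimates on pseudohyperbolic discs of radius $\simeq\tau$ combined with the regularity of $\varphi$ encoded in \ref{rapidly:decreasing:condition:a}--\ref{rapidly:decreasing:condition:e}---or reduce to shorter words and are absorbed by the induction on $N$.

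For the lower bound in part~\ref{thm:sharp:estimate:norm:word:b}, given any $a\in\D$ near the boundary I would use the peak functions $F_a\in\Ap2(0)$ with $\|F_a\|_{\Ap2}\simeq 1$ that are available for $\om\in\SW$ from \cite{Pau-Pelaez:JFA2010,CasFabPasPelJFA2024} and that concentrate on the pseudohyperbolic disc of radius $\simeq\tau(|a|)$ around $a$. Evaluating $L_gF_a$ near $a$ by iterated mean-value estimates on that disc---where $g$, $g'$ and $1+\varphi'$ are essentially constant---the $n$ nested integrations contribute a factor $\simeq(g'(a)/(1+\varphi'(|a|)))^n$ while the $\ell+m$ multiplicative letters contribute $g(a)^{\ell+m}$, yielding
\[
\opnorm{L_g}_{\Ap2}\gtrsim\|L_gF_a\|_{\Ap2}\gtrsim\frac{|g(a)|^{\ell+m}|g'(a)|^n}{(1+\varphi'(|a|))^n}\qquad(a\in\D).
\]
Taking the supremum over $a$ and using $(s-1)n=\ell+m$ recovers exactly $\|g\|_{\B_\varphi^s}^N$ and, since $F_a\in\Ap2(0)$, the same estimate applies simultaneously to $\|L_g\|_{\Ap2}$ and $\opnorm{L_g}_{\Ap2}$, closing the circle of equivalences in \eqref{eqn:sharp:estimate:norm:word}.

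The main obstacle, I expect, is the Leibniz-type expansion of $(L_gf)^{(n)}$: making it uniform across the finitely many words in $W_g(\ell,m,n)$ and, above all, controlling the lower-order terms that carry either higher derivatives of $g$ or residual $T_g$-integrations without unbalancing the delicate exponent count behind the bound $|g|^{\ell+m}|g'|^n\lesssim(1+\varphi')^n\|g\|_{\B_\varphi^s}^N$. The cleanest route is probably a joint induction on $(N,n)$ that couples each LP iteration with the removal of one $T_g$-letter from $L_g$, so that at every step the fractional Bloch exponent $s=N/n$ rescales consistently.
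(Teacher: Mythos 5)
The critical gap is in your lower bound for part~\ref{thm:sharp:estimate:norm:word:b}. Testing $L_g$ on peak functions $F_a$ and claiming that ``iterated mean-value estimates'' yield $|T_gF_a|\gtrsim\frac{|g'(a)|}{1+\varphi'(|a|)}|F_a|$ near $a$ does not work: $T_gF_a(z)=\int_0^zF_ag'\,d\zeta$ accumulates contributions along the whole path from $0$ to $z$, and the part of the integral outside the disc $D_{\delta}(a)$ carries an uncontrolled phase that can cancel the local contribution, so there is no pointwise lower bound for the antiderivative in terms of $F_a$ itself. A single application of \eqref{eq:LP2} disposes of the outermost integration (which is why the case of one $T_g$ is easy), but for $n\ge2$ one is left needing a lower bound for $|T_g^{j}F_a|$ near $a$, and this is precisely the cancellation phenomenon the paper highlights as the central obstruction. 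The paper circumvents it with an entirely different mechanism: the two-step reduction through the non-analytic operators $Q_g^{\sigma,\ell}=|g|^{\sigma\ell}T_g^{\ell}$ (Propositions~\ref{prop:sharp:lower:estimate:norm:word:1} and~\ref{lem:sharp:lower:estimate:norm:word:2}), whose base case (Proposition~\ref{lem:sharp:lower:estimate:norm:word:3}) represents $g^{N\sigma}(g')^NK^{\om}_a$ at $a$ by the reproducing formula and then integrates by parts via Stokes' theorem so that only $T_gK^{\om}_{a,0}$ --- never a pointwise lower bound for an iterated integral --- appears. Your proposal contains no substitute for this step, and without it the chain of equivalences in \eqref{eqn:sharp:estimate:norm:word} does not close.

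The upper bound in part~\ref{thm:sharp:estimate:norm:word:b} is also only a sketch with unresolved issues, though these are less fatal. First, \eqref{eq:LP2} is stated for weights $\om^{q}$ with $\om\in\SW$; iterating it $n$ times requires a Littlewood--Paley formula for the perturbed weights $\om^{p/2}(1+\varphi')^{-kp}$, which you do not justify (the paper avoids this by applying \eqref{eq:LP2} once to suitable analytic powers $g^{3m\ell}(T_g^{\ell}f)^{3n}$ and resolving the resulting self-referential inequality for $\|Q_g^{\sigma,\ell}\|$ in Proposition~\ref{prop:upper:estimate:norm:Q}). Second, your Leibniz expansion of $(L_gf)^{(n)}$ produces terms carrying $g'',\dots,g^{(n)}$ and residual operators $T_g^{j}$; controlling the former requires nontrivial estimates of the type \eqref{eqn:estimate:second:derivative} (which the paper proves only for the specific combination $|g|^m|g'|^{n-1}|g''|$ and only needs in the lower bound), and absorbing the latter forces you to pass from $\B^{s}_\varphi$ to $\B^{s'}_\varphi$ with $s'<s$, i.e.\ you are implicitly using the quantitative embedding $\|g\|_{\B^{q_1}_\varphi}\lesssim\|g\|_{\B^{q_2}_\varphi}$ of Theorem~\ref{prop:radicality:estimateintro}, a nontrivial result that your proposal neither cites nor proves. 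Your pointwise identity $|g|^{\ell+m}|g'|^n\lesssim(1+\varphi')^n\|g\|_{\B^s_\varphi}^N$ for the dominant term is correct and is indeed the same exponent bookkeeping the paper uses, but by itself it only handles one of the many terms in the expansion.
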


As for the proof of Theorem~\ref{thm:sharp:estimate:norm:word}~\ref{thm:sharp:estimate:norm:word:a},  the identity $\|L_g\|_{\Ap2}
\simeq\|g\|^{N}_{H^\infty}$  has been proved in \cite[Theorem~1.3~b)]{CasFabPasPelJFA2024} for any radial weight $\omega$. The remaning inequality
$\|g\|^{N}_{H^\infty}\lesssim \opnorm{L_g}_{\Ap2}$  of \eqref{eqn:sharp:estimate:norm:wordceroTg}
can be proved bearing in mind \cite[Theorem~1.3~a)]{CasFabPasPelJFA2024}  (see also Theorem~\ref{thm:compo} below) and mimicking its proof.

In \cite[Theorems~1.3-1.4]{CasFabPasPelJFA2024} it is proved that
 when $s=\frac{N}n\in\N$ we have  $\|L_g\|_{\Ap2}\simeq\|S_g^{s-1}T_g\|_{\Ap2}^n\simeq\|g^s\|_{\B_\varphi}^n=\|g\|_{\B_\varphi^s}^N$, for any radial weight $\omega$. 
Therefore Theorem~\ref{thm:sharp:estimate:norm:word}~\ref{thm:sharp:estimate:norm:word:b} holds for any radial weight $\omega$ when either $\ell+m=0$  or $N\in \{1,2\}$. In the remaining cases, a good number of significant obstacles which require  new  techniques and ideas have to be overcome in order to complete the proof.

The first one is related with the following embeddings among the classes of symbols $\B^{s}_\varphi$.
\begin{theorem}\label{prop:radicality:estimateintro}
Let be $\om=e^{-2\varphi}\in \SW$. Then, 
\begin{equation}\label{eq:i1n}
\B^{q_2}_\varphi\subset \B^{q_1}_\varphi, \quad\text{for any $1\le q_1<q_2$.}
\end{equation}
In addition, we have the estimate 
\begin{equation}\label{eq:i1}
\|g\|_{\B^{q_1}_\varphi}\lesssim \|g\|_{\B^{q_2}_\varphi}
\qquad(g\in\H(\D)).
\end{equation}
\end{theorem}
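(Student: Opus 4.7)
The plan is to establish the norm inequality \eqref{eq:i1}; the inclusion \eqref{eq:i1n} then follows at once. By homogeneity I may normalize $\|g\|_{\B^{q_2}_\varphi}=1$ and reduce to proving $q_1|g(z)|^{q_1-1}|g'(z)|\lesssim 1+\varphi'(|z|)$ for every $z\in\D$, knowing only that $q_2|g(z)|^{q_2-1}|g'(z)|\le 1+\varphi'(|z|)$. My strategy is to split according to whether $|g(z)|\ge 1$ or $|g(z)|<1$.

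In the regime $|g(z)|\ge 1$, the factorization
\[
q_1|g(z)|^{q_1-1}|g'(z)|=\tfrac{q_1}{q_2}|g(z)|^{q_1-q_2}\cdot q_2|g(z)|^{q_2-1}|g'(z)|
\]
combined with $|g(z)|^{q_1-q_2}\le 1$ (since $q_1<q_2$ and $|g(z)|\ge 1$) yields the desired inequality directly from the hypothesis, with constant $q_1/q_2$.

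The hard case is $|g(z)|<1$. Here the pointwise chain collapses---indeed, the $\B^{q_2}_\varphi$ bound carries no information about $g'(z)$ at a zero of $g$---so a complex-analytic argument is required. Since $|g(z)|^{q_1-1}\le 1$, it suffices to prove $|g'(z)|\lesssim 1+\varphi'(|z|)$, and I would deduce this from Cauchy's estimate on the disc $D(z,\delta_z)$ with $\delta_z:=c\,(1+\varphi'(|z|))^{-1}$ for a small fixed $c>0$. At this point the $\SW$ hypotheses enter crucially through the standard estimate $\tau(|z|)(1+\varphi'(|z|))\gtrsim 1$ on $\D$---a consequence of $\varphi''\simeq\tau^{-2}$ together with the Lipschitz control $\tau'\to 0$; compare \eqref{eqn:tau:1+varphiprime:on:Ddeltaa}---which forces $\delta_z\lesssim\tau(|z|)$, so that $D(z,\delta_z)$ is contained in the natural bubble at $z$ on which $1+\varphi'(|\cdot|)$ is essentially constant. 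Integrating the gradient bound $|\nabla|g|^{q_2}|\le 1+\varphi'$ along the segment from $z$ to $w\in D(z,\delta_z)$ then yields
\[
\bigl||g(w)|^{q_2}-|g(z)|^{q_2}\bigr|\lesssim (1+\varphi'(|z|))\,\delta_z\lesssim 1,
\]
so $|g(w)|\lesssim 1$ throughout $D(z,\delta_z)$, and Cauchy's formula delivers $|g'(z)|\lesssim \delta_z^{-1}\simeq 1+\varphi'(|z|)$, as required.

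Combining the two regimes proves \eqref{eq:i1} and thus \eqref{eq:i1n}. The main obstacle is genuinely the small-$|g|$ analysis, and the $\SW$ structure cannot be dispensed with there: in the unweighted analogue ($\varphi\equiv 0$, which is not in $\SW$) the function $g(z)=(1-z)^{1/2}$ satisfies $|\nabla|g|^2|\equiv 1$ while $|g'(z)|=(2|1-z|^{1/2})^{-1}$ is unbounded, showing that $\B^2\not\subset\B^1$ in general. Hence the Cauchy step, together with the comparison $1+\varphi'\gtrsim\tau^{-1}$, is the crucial place where the rapid decay assumptions are used.
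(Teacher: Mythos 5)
Your proof is correct and follows essentially the same route as the paper: the same case split at $|g(z)|=1$, the same pointwise factorization when $|g(z)|\ge1$, and, when $|g(z)|<1$, a Cauchy estimate on a small disc combined with the integrated gradient bound and the key fact $\tau(1+\varphi')\gtrsim1$ from Proposition~\ref{prop:properties:varphi}. The only (inessential) difference is that you take the disc radius $\simeq(1+\varphi'(|z|))^{-1}$ rather than $\delta\tau(z)$ as in the paper, which slightly streamlines the final exponent bookkeeping.
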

Estimate \eqref{eq:i1} is essential to prove Theorem~\ref{thm:sharp:estimate:norm:word}~\ref{thm:sharp:estimate:norm:word:b}. Both results can be used to study the boundedness
of some linear combinations of $g$-words, for instance:

\begin{corollary}\label{cor:main:thm}
Let be $\om=e^{-2\varphi}\in \SW$, $0<p<\infty$, $g\in\H(\D)$ .
Let $L_{g,j}\in W_g(\ell_j,m_j,n_j)$, $N_j=\ell_j+m_j+n_j$, and $s_j=\frac{N_j}{n_j}$, with $n_j>0$, for $j\in\{0,\dots,J\}$.
If %$\ell_j+m_j<\ell_0+m_0$
$N_j<N_0$ and $s_j<s_0$, for $j\in\{1,\dots,J\}$, then $L_g=L_{g,0}+\cdots+L_{g,J}$ is bounded on $\Ap2$ if and only if so is $L_{g,0}$.
\end{corollary}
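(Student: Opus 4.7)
The plan is to prove both implications using Theorem \ref{thm:sharp:estimate:norm:word}\ref{thm:sharp:estimate:norm:word:b} in conjunction with the embedding provided by Theorem \ref{prop:radicality:estimateintro}.

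For the sufficiency direction, assume $L_{g,0}$ is bounded. Theorem \ref{thm:sharp:estimate:norm:word}\ref{thm:sharp:estimate:norm:word:b} yields $g\in\B_\varphi^{s_0}$, and since $s_j<s_0$ for every $j=1,\dots,J$, Theorem \ref{prop:radicality:estimateintro} gives $\|g\|_{\B_\varphi^{s_j}}\lesssim\|g\|_{\B_\varphi^{s_0}}$, hence $g\in\B_\varphi^{s_j}$. A second application of Theorem \ref{thm:sharp:estimate:norm:word}\ref{thm:sharp:estimate:norm:word:b} then gives $\|L_{g,j}\|_{\Ap2}\simeq\|g\|_{\B_\varphi^{s_j}}^{N_j}\lesssim\|g\|_{\B_\varphi^{s_0}}^{N_j}$, and the triangle inequality produces $\|L_g\|_{\Ap2}\le\sum_{j=0}^J\|L_{g,j}\|_{\Ap2}\lesssim 1+\|g\|_{\B_\varphi^{s_0}}^{N_0}<\infty$.

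For the converse, combining the equivalence of Theorem \ref{thm:sharp:estimate:norm:word}\ref{thm:sharp:estimate:norm:word:b} with the embedding of Theorem \ref{prop:radicality:estimateintro} yields, for $j\ge 1$,
\[
\|L_{g,j}\|_{\Ap2}\simeq\|g\|_{\B_\varphi^{s_j}}^{N_j}\lesssim\|g\|_{\B_\varphi^{s_0}}^{N_j}\simeq\|L_{g,0}\|_{\Ap2}^{N_j/N_0}.
\]
Writing $L_{g,0}=L_g-\sum_{j\ge 1}L_{g,j}$ and applying the triangle inequality then yields
\[
\|L_{g,0}\|_{\Ap2}\lesssim\|L_g\|_{\Ap2}+\sum_{j=1}^J\|L_{g,0}\|_{\Ap2}^{N_j/N_0},
\]
and since $N_j/N_0<1$ for each $j\ge 1$, a standard absorption argument concludes $\|L_{g,0}\|_{\Ap2}\lesssim 1+\|L_g\|_{\Ap2}$, so $L_{g,0}$ is bounded.

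The main obstacle is that the displayed chain of inequalities yields a useful bound only when $\|L_{g,0}\|_{\Ap2}$ is a priori finite; otherwise both sides are trivially infinite and the absorption step fails. To handle this, I would first carry out the argument for the regularization $g_r(z)=g(rz)$ with $r\in(0,1)$, for which $g_r\in H^\infty$ ensures the finiteness of all relevant norms and the absorption argument applies to give $\|L_{g_r,0}\|_{\Ap2}\lesssim 1+\|L_{g_r}\|_{\Ap2}$. The convexity of $\varphi$ guaranteed by condition \ref{rapidly:decreasing:condition:e} implies that $\|g_r\|_{\B_\varphi^{s_0}}$ is nondecreasing in $r$ and converges to $\|g\|_{\B_\varphi^{s_0}}$ as $r\to 1^-$; combining this with a uniform control of $\|L_{g_r}\|_{\Ap2}$ by $\|L_g\|_{\Ap2}$ (via the intertwining identity $L_{g_r}D_r=D_rL_g$, applied to polynomials, which are dense in $\Ap2$, together with the boundedness of the dilation $D_r$ on $\Ap2$) and passing to the limit $r\to 1^-$ transfers the bound to $g$ itself.
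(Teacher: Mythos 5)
Your argument is correct and follows essentially the same route as the paper: both directions rest on Theorem \ref{thm:sharp:estimate:norm:word}, the embedding of Theorem \ref{prop:radicality:estimateintro}, the dilation estimate of Proposition \ref{prop:norm:dilations}, the limit identity \eqref{eqn:radialized:symbol:Bloch}, and an absorption argument exploiting $N_j<N_0$ (the paper absorbs at the level of $\|g_r\|_{\B^{s_0}_\varphi}$ rather than of $\|L_{g_r,0}\|_{\Ap2}$, which is equivalent via \eqref{eqn:sharp:estimate:norm:word}). The only inessential blemish is your claim that $r\mapsto\|g_r\|_{\B_\varphi^{s_0}}$ is nondecreasing, which is neither proved in the paper nor needed: the identity $\lim_{r\to1^-}\|g_r\|_{\B^{s_0}_\varphi}=\|g\|_{\B^{s_0}_\varphi}$ from Proposition \ref{pr:radialized:symbol:Bloch} suffices.
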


In \cite[Section~2]{Aleman:Cascante:Fabrega:Pascuas:Pelaez2}, by using their conformally invariance, it is proved that 
BMOA-type classes of symbols which appear in the context of Hardy spaces and standard Bergman spaces satisfy analogous embeddings to \eqref{eq:i1n}. 
 Due to the different nature of Hardy spaces (and standard Bergman spaces) and weighted Bergman spaces induced by smooth rapidly decreasing weights,
the spaces of symbols  $\B^{s}_\varphi$ are strictly contained in the classical Bloch space, so they  are not conformally invariant.  This fact leads us to use different skills in the proof of Theorem~\ref{prop:radicality:estimateintro}. Indeed,
we provide a direct  proof of Theorem~\ref{prop:radicality:estimateintro} where it is strongly used the identity (see \eqref{eqn:Bqphi:Lipschitz} below)
\[
\|g\|_{\B^q_\varphi}^q
=\sup_{\substack{z,w\in\D\\ z\ne w}}
\frac{\bigl||g(z)|^q-|g(w)|^q\bigr|}{\beta_{\varphi}(z,w)}
\qquad(g\in\H(\D),q\ge1),
\]
where $\beta_{\varphi}(z,w)$ is  the distance  on $\D$ induced by the Riemannian metric
\[
\tfrac12\bigl(1+\varphi'(|z|)\bigr)(dz\otimes d\overline{z}).
\]
It is worth mentioning that, when $\varphi$ satisfies an additional condition (see \eqref{varphi}), the Bloch-type class $\B^q_\varphi$ coincides with the growth class 
\begin{equation}\label{eqn:def:Hintyqvarphi}
H^{\infty,q}_\varphi:=
\left\{g \in\H(\D): \|g\|^q_{H^{\infty,q}_\varphi}
                      =\sup_{z\in\D}\frac{|g(z)|^q}{\varphi(|z|)}<\infty\right\}
\end{equation}
and therefore  \eqref{eq:i1n}  trivially holds because
if $1\le q_1<q_2$ then
\[\|g\|_{H^{\infty,q_1}_\varphi}\lesssim  \|g\|_{H^{\infty,q_2}_\varphi}\qquad(g\in\H(\D)).
\]
However, $\|g\|_{\B^q_{\varphi}}$ and $\|g\|_{H^{\infty,q}_\varphi}$ are not comparable for any $g\in\H(\D)$, so 
 the inequality \eqref{eq:i1}  which
plays a key role in the proof 
of  Theorem~\ref{thm:sharp:estimate:norm:word}~\ref{thm:sharp:estimate:norm:word:b}, does not follow from the above inequality.

Especially, if $q_1,q_2\in \N$, Theorem~\ref{prop:radicality:estimateintro} states that  $\B_\varphi$ has the radicality property \cite{CasFabPasPelJFA2024}. To figure out the radicality property for a class of analytic functions may be a tough problem which has attracted some attention in the recent years \cite{Aleman:Cascante:Fabrega:Pascuas:Pelaez2,CasFabPasPelJFA2024,Gomez-Lefevre-Queffelec-2025}. 
 The classical Bloch space satisfies the radicality property \cite[Section~2]{Aleman:Cascante:Fabrega:Pascuas:Pelaez}, therefore when $\mathcal{T}(A^p_\omega)=\left \{g\in \H(\D): T_g\,\text{ is bounded on}\, A^p_\omega\right\}$ coincide with it, this latter space has the radicality property. For instance, it happens if $\omega$ is a radial doubling weight or a Bekoll\'e-Bonami weight, see   \cite{CasFabPasPelJFA2024} and the references therein. In addition,  by using  operator theoretical arguments,  it has been recently proved in   \cite[Theorem~1.1]{CasFabPasPelJFA2024} that
$\mathcal{T}(A^p_\omega)=\left \{g\in \H(\D): T_g\,\text{ is bounded on}\, A^p_\omega\right\}$ has the radicality property for any
 radial weight  $\omega$ and $0<p<\infty$.
\vspace{1em}

The second major obstacle tackled in the proof of Theorem~\ref{thm:sharp:estimate:norm:word} consists on proving the following inequality
 (see Proposition~\ref{lem:sharp:lower:estimate:norm:word:3} below)
\begin{equation}
\label{eqn:lem3:sharp:lower:estimate:norm:wordintro}
\|g\|^{\sigma +1}_{\B^{\sigma+1}_{\varphi}}\lesssim
\opnorm{Q_g^{\sigma,1}}_{\Lp2} 
\qquad(g\in\H(\overline{\D})), \quad 0<\sigma\in\Q,
\end{equation}
which involves the intermediate operators  $Q_g^{\sigma,\ell} f= |g|^{\sigma \ell} T^{\ell}_g f$, $\ell\in \N$, $\sigma>0$.

Our proof of \eqref{eqn:lem3:sharp:lower:estimate:norm:wordintro} has a several complex variable flavour. Indeed,  
it is based on a representation formula,   
 a convenient application of Stokes' theorem, new norm estimates for intermediate operators  $Q_g^{\sigma,\ell}$,  pointwise norm estimates
 and
 precise norm estimates  of the Bergman reproducing kernel $K^a_\om$ of $A^2_\omega$,  on weighted Bergman spaces induced by perturbations of $\omega_p$. However, the
analogous inequality in the Hardy setting (see \cite[Proposition 7.4]{Aleman:Cascante:Fabrega:Pascuas:Pelaez2}) is proved by a completely different method 
which uses the theory of tent spaces and a description of the space of symbols in terms of Carleson measures. 

The paper is organized as follows. 
In Section \ref{section:auxiliary:results} we gather auxiliary results which will be used in proving our main theorem. Section \ref{section:symbols} is devoted to study our Bloch classes of power functions, and, in particular, we prove Theorem \ref{prop:radicality:estimateintro}. In Section \ref{section:reduction:to:SmTn} we reduce the proof of our main result to the case $L_g=S^m_gT^n_g$. We finish its proof 
in Sections \ref{section:proof:of:proposition4.1} and \ref{section:proof:of:proposition4.2}. 
In Section \ref{section:proof:of:proposition4.2} we also prove 
Corollary \ref{cor:main:thm}.
Finally, Section \ref{examples} addresses the proof that the weights \eqref{eqn:examples:intro} belong to $\SW$.

\section{Auxiliary results}\label{section:auxiliary:results}

\subsection{Properties of the function $\varphi$}

Next proposition collects some well known properties of (one-half of) the logarithm $\varphi$ of a rapidly decreasing weight (see, for example, \cite[Lemma 32]{CP} and \cite[Lemma 2.1]{Pau-Pelaez:JFA2010}). Due to their simplicity, and for the sake of completeness, we give a full proof of them. 

\begin{proposition}\label{prop:properties:varphi}
Let $\varphi$ be a function satisfying \ref{rapidly:decreasing:condition:a} and \ref{rapidly:decreasing:condition:b}. Then:
\begin{enumerate}[label={\sf\alph*)},topsep=3pt, 
		leftmargin=0pt, itemsep=4pt, wide, listparindent=0pt, itemindent=6pt] 
\item\label{item1:properties:varphi}
 ${\displaystyle\lim_{r\to1^{-}}\frac{\tau(r)}{1-r}=0}$.
\item\label{item2:properties:varphi} 
$|\tau(r)-\tau(s)|\lesssim|r-s|$, for $r,s\in[0,1)$.
\item\label{item3:properties:varphi}
 ${\displaystyle\lim_{r\to 1^-}(1-r)\varphi'(r)=\infty}$, and, in particular,
${\displaystyle\lim_{r\to 1^-}\varphi'(r)=\infty}$.
\item\label{item4:properties:varphi} 
${\displaystyle
\lim_{r\to 1^-}\frac{\varphi(r)}{\log(\frac1{1-r})}=\infty}$, and, in particular,
${\displaystyle\lim_{r\to 1^-}\varphi(r)=\infty}$.
\item\label{item5:properties:varphi}
 ${\displaystyle\lim_{r\to1^-}\tau(r)\varphi'(r)=\infty}$, or, equivalently, 
         ${\displaystyle
          \lim_{r\to 1^-}\frac{\varphi''(r)}{(\varphi'(r))^2}=0}$.
\item\label{item6:properties:varphi} There exists $0<\delta<\inf\{(1-|z|)/\tau(z):z\in\D\}$ such that
\begin{equation}\label{eqn:tau:1+varphiprime:on:Ddeltaa}
\tau(z)\simeq\tau(a)
\quad\mbox{and}\quad
1+\varphi'(|a|)\simeq1+\varphi'(|z|)
\quad(a\in\D,\,z\in D_{\delta}(a)),
\end{equation}
  where $D_{\delta}(a):=D(a,\delta\tau(a))$.
\end{enumerate}
\end{proposition}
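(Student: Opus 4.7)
The plan is to prove the six items sequentially, each leveraging the previous ones; the key reformulation of \ref{rapidly:decreasing:condition:b} is the ODE-like identity $(r\varphi'(r))' = r\Delta\varphi(r) \simeq r/\tau(r)^2$, obtained because on radial functions the Laplacian acts as $\Delta = r^{-1}\partial_r(r\partial_r)$. Items \ref{item3:properties:varphi}--\ref{item5:properties:varphi} then follow from integrating this identity with increasingly sharp control on $\tau$, while \ref{item6:properties:varphi} combines Lipschitz-type estimates with the outcomes of \ref{rapidly:decreasing:condition:b} and \ref{item5:properties:varphi}.

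For \ref{item1:properties:varphi} I would write $\tau(r) = -\int_r^1 \tau'(s)\,ds$, using $\tau(1^-)=0$; since $\tau'(s)\to 0$, for any $\varepsilon>0$ there is $r_0$ with $|\tau'(s)|<\varepsilon$ on $[r_0,1)$, whence $\tau(r) \le \varepsilon(1-r)$ for $r\ge r_0$. For \ref{item2:properties:varphi}, the same observation shows $\tau'$ is bounded on $[0,1)$ as a continuous function extending continuously by $0$ at $r=1$, and the Lipschitz bound then follows from the mean value theorem.

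For \ref{item3:properties:varphi}--\ref{item5:properties:varphi} I integrate $(r\varphi'(r))'\simeq r/\tau(r)^2$ from some fixed $r_0$. Substituting $\tau(s)\le\varepsilon(1-s)$ (from \ref{item1:properties:varphi}) yields $r\varphi'(r) \gtrsim 1/(\varepsilon^2(1-r))$, and the arbitrariness of $\varepsilon$ gives \ref{item3:properties:varphi}; item \ref{item4:properties:varphi} is then immediate on integrating $\varphi'$ once more. For \ref{item5:properties:varphi} a sharper bound is needed: from $(1/\tau(s))'=-\tau'(s)/\tau(s)^2$ and $|\tau'(s)|\le\varepsilon$ on $[r_1,1)$ I obtain $1/\tau(r) - 1/\tau(r_1) \le \varepsilon\int_{r_1}^r 1/\tau(s)^2\,ds$; combined with the integrated ODE this gives $r\tau(r)\varphi'(r) \gtrsim 1/\varepsilon$, and letting $\varepsilon\to 0$ concludes that $\tau(r)\varphi'(r)\to\infty$. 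The advertised equivalence with $\varphi''(r)/(\varphi'(r))^2\to 0$ is then algebraic: $(\tau\varphi')^{-2}\simeq (\varphi''+\varphi'/r)/(\varphi')^2 = \varphi''/(\varphi')^2 + 1/(r\varphi')$, and \ref{item3:properties:varphi} kills the last summand.

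The most delicate step is \ref{item6:properties:varphi}. Comparability of $\tau$ on $D_\delta(a)$ is immediate from \ref{item2:properties:varphi}, since $|\tau(|z|)-\tau(|a|)| \le C|z-a| \le C\delta\tau(|a|)$, so any $\delta$ with $C\delta < 1/2$ works uniformly in $a\in\D$. For the comparability of $1+\varphi'$ I would estimate, for real $r,r'$ in the radial projection of $D_\delta(a)$, the difference $|r'\varphi'(r') - r\varphi'(r)| = |\int_r^{r'} s\,\Delta\varphi(s)\,ds| \lesssim |r'-r|/\tau(|a|)^2 \lesssim \delta/\tau(|a|)$, using the $\tau$-comparability just obtained; then \ref{item5:properties:varphi} converts $1/\tau(|a|)$ into $O(1+\varphi'(|a|))$. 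The subtle part is to pass from $r\varphi'(r)$ to $\varphi'(r)$ uniformly in $a\in\D$: I would treat separately the compact regime $|a|\le 1/2$, where $1+\varphi'$ is bounded and continuity suffices, and the regime $|a|\ge 1/2$, where $r/r' \simeq 1$ throughout $D_\delta(a)$. Choosing $\delta$ small enough then yields $1+\varphi'(|z|) \simeq 1+\varphi'(|a|)$ uniformly in $a\in\D$.
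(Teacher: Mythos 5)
Your proposal is correct throughout. For items \ref{item1:properties:varphi}--\ref{item5:properties:varphi} it follows essentially the same path as the paper: both proofs rest on the identity $r\varphi'(r)=\int_0^r s\,\Delta\varphi(s)\,ds$ together with $\Delta\varphi\simeq\tau^{-2}$; the only difference is that you replace the paper's repeated applications of L'H\^{o}pital's rule by explicit $\varepsilon$-estimates derived from $\tau(1^-)=\tau'(1^-)=0$ (e.g.\ $\tau(r)\le\varepsilon(1-r)$ near $1$, and $1/\tau(r)\le 1/\tau(r_1)+\varepsilon\int_{r_1}^r\tau(s)^{-2}ds$ for item \ref{item5:properties:varphi}), which is a quantitative rephrasing of the same computation. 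Where you genuinely diverge is in the second estimate of \ref{item6:properties:varphi}: the paper compares $\varphi'(|z|)$ and $\varphi'(|a|)$ multiplicatively, writing their ratio as $\frac{|a|}{|z|}\int_0^{|z|}s\Delta\varphi\,ds\big/\int_0^{|a|}s\Delta\varphi\,ds$ and showing via L'H\^{o}pital (using $\int_0^1 s\tau(s)^{-2}ds=\infty$ and $\tau(r\pm\delta\tau(r))/\tau(r)\to1$) that this ratio tends to $1$; you instead bound the difference additively, $|\,|z|\varphi'(|z|)-|a|\varphi'(|a|)\,|\le\int_{|a|}^{|z|}s\Delta\varphi(s)\,ds\lesssim\delta/\tau(a)\lesssim\delta\bigl(1+\varphi'(|a|)\bigr)$, where the last step is exactly item \ref{item5:properties:varphi} (plus boundedness of $1/\tau$ on compacts). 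Your route is shorter and avoids the delicate limit \eqref{eqn:lemma:properties:varphi:5}, at the price of having to choose $\delta$ small (depending on the implicit constant) to absorb the error term in the lower bound; the paper's ratio argument gets the two-sided comparison for a $\delta$ fixed in advance and only then shrinks $\delta$ to handle the compact core. Both treat the region $|a|$ bounded away from $1$ by the same continuity argument. The only point you leave implicit is that $\inf_{z\in\D}(1-|z|)/\tau(z)>0$ (needed so that $D_\delta(a)\subset\D$), but this is immediate from item \ref{item1:properties:varphi} and the continuity and positivity of $\tau$, as the paper notes.
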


\begin{proof}
Assertion \ref{item1:properties:varphi} directly follows from  L'H\^{o}pital's rule and \ref{rapidly:decreasing:condition:b}. Moreover,
it is clear that  \ref{rapidly:decreasing:condition:b} implies that  $\sup_{0<r<1}|\tau'(r)|<\infty$, so \ref{item2:properties:varphi} holds by the mean value theorem. 

Since 
$s\,\Delta\varphi(s)=\frac{d\,}{ds}\bigl(s\,\varphi'(s)\bigr)$, for any $0<s<1$, we have that
\begin{equation}\label{eqn:lemma:properties:varphi:0}
r\,\varphi'(r)=\int_0^rs\,\Delta\varphi(s)\,ds\qquad(0<r<1),
\end{equation}
and, taking into account \ref{rapidly:decreasing:condition:b}, we get that
\begin{equation*}
r\,(1-r)\,\varphi'(r)\gtrsim\frac{\int_0^r\frac{s}{\tau(s)^2}\,ds}{\frac1{1-r}}
=\Phi(r)\qquad(0<r<1).
\end{equation*}
But L'H\^{o}pital's rule and \ref{item1:properties:varphi} show that $\Phi(r)\to\infty$, as $r\to1^{-}$, so \ref{item3:properties:varphi} holds. Then \ref{item4:properties:varphi} directly follows from \ref{item3:properties:varphi}, by L'H\^{o}pital's rule again.

In order to prove \ref{item5:properties:varphi}, multiply both terms of \eqref{eqn:lemma:properties:varphi:0} by $\tau(r)$ to obtain that
\begin{equation*}
r\,\tau(r)\,\varphi'(r)
\gtrsim\frac{\int_0^r\frac{s}{\tau(s)^2}\,ds}{\frac1{\tau(r)}}=\Psi(r)
\qquad(0<r<1).
\end{equation*}
Since $\Psi(r)\to\infty$, as $r\to1^{-}$, by L'H\^{o}pital's rule and \ref{rapidly:decreasing:condition:b}, we have that $\lim_{r\to1^-}\tau(r)\varphi'(r)=\infty$, which is equivalent 
to $\lim_{r\to 1^-}\frac{\varphi''(r)}{(\varphi'(r))^2}=0$, because
\begin{equation*}
\bigl(\tau(r)\varphi'(r)\bigr)^{-2}\simeq\frac{\varphi''(r)}{(\varphi'(r))^2}+\frac1{r\varphi'(r)}
\qquad(0<r<1)
\end{equation*}
and $\lim_{r\to1^{-}}r\varphi'(r)=\infty$, by \ref{item3:properties:varphi}. Therefore \ref{item5:properties:varphi} holds.

Finally, we are going to prove \ref{item6:properties:varphi}. 
By \ref{item2:properties:varphi}, there is a positive constant~{$C$} such that $|\tau(r)-\tau(s)|\le C\,|r-s|$, for all $r,s\in[0,1)$.
Let 
$\delta_0=\frac12\min\bigl\{\frac1{C},\,{\displaystyle\inf_{z\in\D}}\frac{1-|z|}{\tau(z)}\bigr\}$,
 which is a positive number by \ref{item1:properties:varphi} and the hypothesis that $\tau$ is a radial positive continuous function on $\D$.  Then
\begin{equation*}
|\tau(z)-\tau(a)|\le C||z|-|a||\le C|z-a|\le C\delta_0\tau(a)\le\tfrac12\,\tau(a),
\end{equation*}
so $\frac12\tau(a)\le\tau(z)\le\frac32\tau(a)$,
 for any $a\in\D$ and $z\in D_{\delta_0}(a)$, and that shows the first 
estimate of \ref{item6:properties:varphi}. Now let us prove that 
\begin{equation}\label{eqn:lemma:properties:varphi:1}
0<\varliminf_{|a|\to1^{-}}
\inf_{z\in D_{\delta_0}(a)}\tfrac{\varphi'(|z|)}{\varphi'(|a|)}
\quad\mbox{and}\quad
\varlimsup_{|a|\to1^{-}}
\sup_{z\in D_{\delta_0}(a)}\tfrac{\varphi'(|z|)}{\varphi'(|a|)}<\infty.
\end{equation}
By \eqref{eqn:lemma:properties:varphi:0}, we have that
\begin{equation}\label{eqn:lemma:properties:varphi:2}
\frac{\varphi'(|z|)}{\varphi'(|a|)}
=\frac{|a|}{|z|}
   \frac{\int_0^{|z|}s\Delta\varphi(s)\,ds}{\int_0^{|a|}s\Delta\varphi(s)\,ds}
\qquad(z,a\in\D\setminus\{0\})
\end{equation}
If $z\in D_{\delta_0}(a)$ then  $||z|-|a||<\delta_0\tau(|a|)$ so $|a|-\delta_0\tau(|a|)\le|z|\le|a|+\delta_0\tau(|a|)$. 
Since $\lim_{|a|\to1^{-}}\tau(|a|)=0$, it follows that
\begin{equation}\label{eqn:lemma:properties:varphi:3}
\lim_{|a|\to1^{-}}
\inf\bigl\{\tfrac{|z|}{|a|}:z\in D_{\delta_0}(a)\bigr\}=
\lim_{|a|\to1^{-}}
\sup\bigl\{\tfrac{|z|}{|a|}:z\in D_{\delta_0}(a)\bigr\}=1.
\end{equation}
Moreover, there is $0<r_0<1$ such that $0<|a|-\delta_0\tau(|a|)$,
 for $r_0<|a|<1$, and so there are constants $C>c>0$ such that
\begin{equation}\label{eqn:lemma:properties:varphi:4}
c\,\frac{\int_0^{|a|-\delta_0\tau(|a|)}\frac{s}{\tau(s)^2}\,ds}{\int_0^{|a|}\frac{s}{\tau(s)^2}\,ds}
\le   
\frac{\int_0^{|z|}s\Delta\varphi(s)\,ds}{\int_0^{|a|}s\Delta\varphi(s)\,ds}
\le C\,
\frac{\int_0^{|a|+\delta_0\tau(|a|)}\frac{s}{\tau(s)^2}\,ds}{\int_0^{|a|}\frac{s}{\tau(s)^2}\,ds},
\end{equation}
 for $r_0<|a|<1$ and $z\in D_{\delta_0}(a)$. 
By \ref{item1:properties:varphi}, $\int_0^1\frac{s}{\tau(s)^2}\,ds=\infty$, so we may apply L'H\^{o}pital's rule to get that
\begin{equation}\label{eqn:lemma:properties:varphi:5}
\lim_{r\to1^{-}}\frac{\int_0^{r\pm\delta_0\tau(r)}\frac{s}{\tau(s)^2}\,ds}{\int_0^{r}\frac{s}{\tau(s)^2}\,ds}
=\lim_{r\to1^{-}}\frac{\bigl(1\pm\delta_0\tfrac{\tau(r)}{r}\bigr)\bigl(1\pm\delta_0\tau'(r)\bigr)\,\tau(r)^2}{\tau\bigl(r\pm\delta_0\tau(r)\bigr)^2}=1,
\end{equation}
because $\lim_{r\to1^{-}}\frac{\tau(r)}{\tau(r\pm\delta_0\tau(r))}=1$.
In fact, since ${\displaystyle\lim_{r\to1^{-}}\tau(r)=\lim_{r\to1^{-}}\tau'(r)=0}$, the mean value theorem shows that
\begin{equation*}
|\tau(r\pm\delta_0\tau(r))-\tau(r)|\le c(r)\tau(r), \quad\mbox{with}\quad\lim_{r\to1^{-}}c(r)=0,
\end{equation*}
and so ${\displaystyle\lim_{r\to1^{-}}\tfrac{\tau(r\pm\delta_0\tau(r))}{\tau(r)}=1}$.
Finally, \eqref{eqn:lemma:properties:varphi:1} easily follows 
from~{\eqref{eqn:lemma:properties:varphi:2}-\eqref{eqn:lemma:properties:varphi:5}.}

Note that \eqref{eqn:lemma:properties:varphi:1} implies that there is a radius $0<r<1$ such that 
\begin{equation}\label{eqn:lemma:properties:varphi:6}
1+\varphi'(|z|)\simeq1+\varphi'(|a|)\qquad(r<|a|<1,\,z\in D_{\delta_0}(a))
\end{equation}
If $|a|\le r$ then  
$|\tau(a)-\tau(0)|\le C\,r$, so
\[
\overline{D}(a,\delta\tau(a))\subset\overline{D}(0,r+\delta\tau(a))\subset \overline{D}(0,R_{\delta,r}).
\]
where $R_{\delta,r}=r+\delta(\tau(0)+Cr)>0$. Let
 $\delta=\min\{\delta_0,\,\frac12\frac{1-r}{\tau(0)+Cr}\}$. Then $R_{\delta,r}<1$, and therefore a continuity argument shows that
\begin{equation}\label{eqn:lemma:properties:varphi:7}
1+\varphi'(|z|)\simeq1+\varphi'(|a|)\qquad(|a|\le r,\,z\in D_{\delta}(a)).
\end{equation}
Hence \eqref{eqn:lemma:properties:varphi:6} and \eqref{eqn:lemma:properties:varphi:7} gives the second estimate of \ref{item6:properties:varphi}, and that ends the proof of the proposition.
\end{proof}

The next result applied to $\psi(r)=\varphi(r)+r$ will be used in the proof of Lemma~\ref{le:crecimientog-gprima}.

\begin{proposition}\label{prop:properties:varphi2}
Let $\psi$ be an increasing convex $C^2$ function on $[0,1)$ such that $\psi'(0)>0$,  $\lim_{r\to 1^-}(1-r)\psi'(r)=\infty$ and  $\lim_{r\to 1^-}\frac{\psi''(r)}{\left(\psi'(r) \right)^2}=0$. Then there is $\delta>0$ such that
\begin{align}
\label{eq:compatibility}
&r+\frac{\delta}{\psi'(r)}<1, \quad\text{ for any $r\in [0,1)$,}\qquad\mbox{and}
\\
\label{eq:cond-crecimiento}
&\sup_{0\le r<1}\frac{\psi'\left(r+\frac{\delta}{\psi'(r)}\right)}{\psi'(r)}<\infty.
\end{align}
\end{proposition}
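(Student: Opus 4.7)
My plan is as follows. For the first assertion \eqref{eq:compatibility}, consider the auxiliary function $h(r):=(1-r)\psi'(r)$. It is continuous on $[0,1)$ and strictly positive there, because $\psi'$ is non-decreasing by convexity, so $\psi'(r)\ge\psi'(0)>0$. The hypothesis $\lim_{r\to 1^-}(1-r)\psi'(r)=\infty$ gives some $r_1\in(0,1)$ with $h\ge 1$ on $[r_1,1)$, while $h$ attains a positive minimum on the compact interval $[0,r_1]$; hence $\inf_{[0,1)}h>0$. Choosing any $\delta$ strictly less than this infimum gives $r+\delta/\psi'(r)<1$ for every $r\in[0,1)$.

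For \eqref{eq:cond-crecimiento} I would switch to $G(r):=1/\psi'(r)$, which is positive and non-increasing by convexity, with
\[
-G'(r)=\frac{\psi''(r)}{\psi'(r)^2}\xrightarrow[r\to 1^-]{}0
\]
by hypothesis. Fix any $\delta>0$ satisfying \eqref{eq:compatibility}, pick $\varepsilon>0$ with $\varepsilon\delta\le 1/2$, and choose $r_0\in[0,1)$ such that $-G'(t)\le\varepsilon$ on $[r_0,1)$. For every $r\in[r_0,1)$, put $s:=r+\delta G(r)$; by the first part $s<1$, so integrating $-G'$ over $[r,s]\subset[r_0,1)$ gives
\[
G(r)-G(s)=\int_r^s(-G'(t))\,dt\le\varepsilon(s-r)=\varepsilon\delta\,G(r),
\]
which rewrites as $\psi'(s)/\psi'(r)=G(r)/G(s)\le 1/(1-\varepsilon\delta)\le 2$.

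It remains to control the supremum on $[0,r_0]$. Here the map $r\mapsto\psi'(r+\delta/\psi'(r))/\psi'(r)$ is continuous, since the argument stays strictly below $1$ by \eqref{eq:compatibility} and $\psi'$ is continuous on $[0,1)$; as the domain is compact, the supremum is automatically finite. Combining both ranges yields \eqref{eq:cond-crecimiento}. I do not anticipate a genuine obstacle: the heart of the argument is the single observation that the hypothesis $\psi''/(\psi')^2\to 0$ is precisely $G'\to 0$, which forces $1/\psi'$ to change by only a small fraction of itself over intervals of length $\delta/\psi'$.
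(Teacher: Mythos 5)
Your proof is correct, and it takes a genuinely different route from the paper's. For \eqref{eq:cond-crecimiento} the paper discretizes: it introduces the sequence $r_n$ defined by $\psi'(r_n)=e^n\psi'(0)$, uses the mean value theorem to show $r_n+\delta/\psi'(r_n)\le r_{n+1}$ for large $n$ (which bounds the ratio along the sequence by $e$ times a constant), and then transfers this to all $r$ by observing that $h(r)=r+\delta/\psi'(r)$ is eventually increasing because $h'=1-\delta\psi''/(\psi')^2\to1$. You instead integrate the quantity $-G'=\psi''/(\psi')^2$ directly over $[r,\,r+\delta G(r)]$ with $G=1/\psi'$, getting $G(r)-G(s)\le\varepsilon\delta\,G(r)$ and hence a ratio bound of $1/(1-\varepsilon\delta)\le2$ near $1$, with the compact piece $[0,r_0]$ handled by continuity. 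Both arguments hinge on the same observation — that the hypothesis $\psi''/(\psi')^2\to0$ says exactly that $1/\psi'$ has small derivative, so it changes by only a small fraction of itself over an interval of length $\delta/\psi'(r)$ — but your continuous version is shorter, avoids the auxiliary sequence and the separate monotonicity argument for $h$, and yields an explicit bound near the boundary. The only detail worth making fully explicit is that on $[0,r_0]$ the image of $r\mapsto r+\delta/\psi'(r)$ is a compact subset of $[0,1)$, so $\psi'$ is bounded there; you essentially say this and it is immediate from \eqref{eq:compatibility} and continuity.
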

\begin{proof}
Since $\lim_{r\to 1^-}(1-r)\psi'(r)=\infty$, there exists $\delta>0$ satisfying \eqref{eq:compatibility}.
Now let us consider an increasing sequence $\{r_n\}_{n=0}^\infty$ in $[0,1)$ satisfying that $\psi'(r_n)=e^{n}\psi'(0)$, for any $n\ge0$.
It is clear that $\lim_{n\to\infty} r_n=1$, 
 and let us prove that  there is $n_0\in \N$ such that 
\begin{equation}\label{eq:cond-crecimiento-discreta-1}
 r_n+\frac{\delta}{\psi'(r_n)}\le r_{n+1}, \quad \text{for any $n\ge n_0$},
\end{equation}
Indeed, by the main value theorem, for each $n\in\N$ there is $x_n\in (r_n,r_{n+1})$ such that  
$$(e-1)\psi'(r_{n})=\psi'(r_{n+1})-\psi'(r_{n})=\psi''(x_n)(r_{n+1}-r_n),$$
so the inequality in \eqref{eq:cond-crecimiento-discreta-1} is equivalent to
\begin{equation}\label{eq:cond-crecimiento-discreta-10}
\frac{\delta\psi''(x_n)}{(\psi'(r_n))^2}<e-1, \quad \text{for any $n\ge n_0$}.
\end{equation}
By the convexity of $\psi$ 
$$\frac{\delta\psi''(x_n)}{(\psi'(r_n))^2}= \frac{\delta\psi''(x_n)}{(\psi'(x_n))^2}\frac{(\psi'(x_n))^2}{(\psi'(r_n))^2}
\le \frac{\delta\psi''(x_n)}{(\psi'(x_n))^2}\frac{(\psi'(r_{n+1}))^2}{(\psi'(r_n))^2}=e^2 \frac{\delta\psi''(x_n)}{(\psi'(x_n))^2},$$
which together with the hypothesis $\lim_{r\to 1^-}\frac{\psi''(r)}{\left(\psi'(r) \right)^2}=0$ implies that \eqref{eq:cond-crecimiento-discreta-10} holds.
Next observe that \eqref{eq:cond-crecimiento-discreta-1} implies that 
\begin{equation*}
A=\sup_{n\in\N}\frac{\psi'\left(r_n+\frac{\delta}{\psi'(r_n)}\right)}{\psi'(r_n)}<\infty.
\end{equation*}
Now let us consider the function $h(r)=r+\frac{\delta}{\psi'(r)}$. Since $h'(r)=1-\delta\frac{\psi''(r)}{\left(\psi'(r) \right)^2}$ and $\lim_{r\to 1^-}\frac{\psi''(r)}{\left(\psi'(r) \right)^2}=0$, there exists $n_1\in \N$ such that $h$ is increasing in the interval $[r_{n_1},1)$. So
\begin{equation*}
\sup_{r\in [r_{n},r_{n+1})}
\frac{\psi'\left(r+\frac{\delta}{\psi'(r)}\right)}{\psi'(r)}
\le \frac{ \psi'\left( r_{n+1}+\frac{\delta}{\psi'(r_{n+1})}\right) }{\psi'(r_n)}\le e A,
\quad\mbox{for any $n\ge n_1$,}
\end{equation*}
which implies 
$\sup_{ r_{n_1}\le r<1}
 \frac{\psi'\left(r+\frac{\delta}{\psi'(r)}\right)}{\psi'(r)}<\infty.
$ This condition is equivalent to 
\eqref{eq:cond-crecimiento} and this finishes the proof. 
\end{proof}

\subsection{Operator theoretic results}
In the statement of the next result, proved in \cite[Theorem~3.1]{Aleman:Cascante:Fabrega:Pascuas:Pelaez2}, we denote by $\Pi_0:\H(\D)\to\H_0(\D)$ the operator given by $\Pi_0f=f-f(0)=f_0$.

\begin{theorem}
Let $L\in W_g(\ell,m,n)$, where $\ell,m,n\in\N_0$, $m+n\ge1$, and let $k=\ell+m$. Then there exist integers $a_j,b_j$, $j=1,\dots,k$, which do not dependent on $g$ and satisfy
\begin{align}
\label{eqn:global:decomposition}
L&= (1-\delta_L) S^k_gT^n_g+\delta_LS^k_gT_g^n\Pi_0 \\
\nonumber
 &\quad +\sum_{j=1}^ka_j\,S_g^{k-j}T_g^{n+j}
        +\sum_{j=1}^kb_j\,S_g^{k-j}T_g^{n+j}\Pi_0,
\end{align}	
where $\delta_L=0$, if $L$ ends in $T_gM^i_g$, for some $i\in\N_0$, and  $\delta_L=1$, if $L$ ends in $S_gM^i_g$, for some $i\in\N_0$. In particular,
\begin{equation}
\label{eqn:global:decomposition:H0}
L= S^k_gT^n_g+\sum_{j=1}^kc_j\,S_g^{k-j}T_g^{n+j}
\quad\mbox{on $\H_0(\D)$,}	
\end{equation}
where the $c_j$'s are integers independent of $g$.
\end{theorem}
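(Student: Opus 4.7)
The plan is to induct on the length $N$ of the word $L$, exploiting the fundamental identity
\[
S_g h + T_g h = gh - g(0)h(0)\qquad(h\in\H(\D)),
\]
which gives two reduction rules: (i) $M_g = S_g + T_g$ when restricted to $\H_0(\D)$, and (ii) $M_g X = S_g X + T_g X$ on all of $\H(\D)$ whenever $X\in\{S_g,T_g\}$, since $S_gf$ and $T_gf$ both vanish at $0$. Each of $M_g, S_g, T_g$ preserves $\H_0(\D)$, so in particular $L$ does. I would first establish the $\H_0$-identity \eqref{eqn:global:decomposition:H0} and then lift it to \eqref{eqn:global:decomposition} by accounting for the action on constants.

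For \eqref{eqn:global:decomposition:H0}, I would induct on $N$ by peeling off the leftmost letter $L_1$ of $L = L_1 L'$. The cases $L_1 = S_g$ (direct left-multiplication) and $L_1 = M_g$ (splitting via (i) as $S_g L' + T_g L'$ on $\H_0$) reduce the problem to the substantive case $L_1 = T_g$. For that case, the key tool is the commutation identity
\[
T_g S_g = S_g T_g - T_g^2 \quad\text{on } \H_0(\D),
\]
which follows from a short direct calculation showing $S_g T_g f - T_g^2 f - T_g S_g f = g(0) f(0)(g - g(0))$, vanishing on $\H_0$. Iterating this identity commutes the initial $T_g$ through $S_g^{k'}$ in the leading term of $T_g L'$, producing $S_g^{k'} T_g^{n'+1} = S_g^k T_g^n$ plus a linear combination of terms with strictly fewer $S_g$'s; a sub-induction on the $S_g$-count handles these corrections. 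All coefficients produced are integers depending only on the shape of $L$.

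To lift to \eqref{eqn:global:decomposition}, I would decompose any $f \in \H(\D)$ as $f = f(0)\cdot 1 + \Pi_0 f$. The $\Pi_0 f$ piece is controlled by the $\H_0$-identity with the identification $c_j = a_j + b_j$. For the constant action, $L(1)$ is computed right-to-left: $M_g^i(1) = g^i$, and the rightmost non-$M_g$ letter acting on $g^i$ gives either $T_g(g^i) = \tfrac{1}{i+1}(g^{i+1} - g(0)^{i+1})$ or $S_g(g^i) = \tfrac{i}{i+1}(g^{i+1} - g(0)^{i+1})$ (in particular $S_g(1) = 0$). The remaining letters then act on an $\H_0$-valued input, to which the $\H_0$-identity applies again. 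The convention $\delta_L \in \{0, 1\}$, tied to whether the rightmost non-$M_g$ letter is $T_g$ or $S_g$, is a bookkeeping choice that determines how the leading $S_g^k T_g^n(1)$-component is apportioned between the explicit leading term and the $\Pi_0$-branch (which annihilates constants): for words ending in $T_g M_g^i$ it is kept with $\delta_L = 0$, while for words ending in $S_g M_g^i$ it is absorbed into $\Pi_0$ with $\delta_L = 1$. Matching the resulting expression with the family $\{S_g^{k-j} T_g^{n+j}(1)\}_{j = 0}^k$ then produces integer coefficients $a_j$, from which $b_j := c_j - a_j$ is also integer.

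The hard part will be the combinatorial bookkeeping required to guarantee that every coefficient remains an integer, independent of $g$. Although each individual manipulation uses only integer-coefficient polynomial identities---the reductions (i) and (ii), the commutation $T_g S_g = S_g T_g - T_g^2$ on $\H_0$, and the explicit formula $T_g^n(1) = \tfrac{1}{n!}(g - g(0))^n$---coordinating them through a double induction structured around both the leftmost letter (for \eqref{eqn:global:decomposition:H0}) and the rightmost non-$M_g$ letter (for the $\delta_L$-dichotomy) must be performed with care so that the two branches of the formula remain consistent and the rightmost-letter rule for $\delta_L$ is respected under the inductive step.
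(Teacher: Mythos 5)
First, a point of comparison: the paper does not prove this statement at all --- it is imported verbatim from \cite[Theorem~3.1]{Aleman:Cascante:Fabrega:Pascuas:Pelaez2} --- so your proposal can only be judged on its own terms. Your plan for the identity \eqref{eqn:global:decomposition:H0} on $\H_0(\D)$ is sound: the reduction $M_g=S_g+T_g$ on $\H_0(\D)$, the commutation $T_gS_g=S_gT_g-T_g^2$ on $\H_0(\D)$ (your computation of the defect $f(0)g(0)(g-g(0))$ is correct), and the double induction on the leftmost letter and the $S_g$-count do yield \eqref{eqn:global:decomposition:H0} with integer $c_j$, since every rewriting rule you use has integer coefficients and preserves the leading coefficient $1$.

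The gap is in the lift to \eqref{eqn:global:decomposition}. You correctly reduce it to expressing $L(1)$ as $(1-\delta_L)S_g^kT_g^n(1)+\sum_{j\ge1} a_jS_g^{k-j}T_g^{n+j}(1)$ with \emph{integer} $a_j$, but your proposed computation of $L(1)$ --- right to left through the terminal $M_g$-block via $T_g(g^i)=\tfrac1{i+1}(g^{i+1}-g(0)^{i+1})$ and $S_g(g^i)=\tfrac{i}{i+1}(g^{i+1}-g(0)^{i+1})$ --- introduces genuinely non-integer rational factors, and you would then have to re-expand the image of $\Pi_0(g^{i+1})$ under the remaining prefix in the family $\{S_g^{k-j}T_g^{n+j}(1)\}_{j}$ and verify that all fractions cancel. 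You explicitly defer this (``the hard part''), and nothing in the proposal indicates why the cancellation occurs; as written, the argument only delivers rational $a_j$. A further quibble: $\delta_L$ is not merely ``a bookkeeping choice'' --- for $L=T_gS_g$ no constant $a_1$ makes $T_gS_g=S_gT_g+a_1T_g^2+b_1T_g^2\Pi_0$ hold (test on $f\equiv1$), so $\delta_L=1$ is forced there. The clean repair is to avoid evaluating at $1$ altogether and run a single left-peeling induction on the full operator identity \eqref{eqn:global:decomposition}, taking as extended base cases the exact integer-coefficient identities $T_gM_g^i=S_g^iT_g$ and $S_gM_g^i=i\,S_g^iT_g+S_g^{i+1}$ (both valid on all of $\H(\D)$, and the latter forces $\delta_L=1$ because $S_g^{i+1}=S_g^{i+1}\Pi_0$), and using in the inductive step that $M_gL'=S_gL'+T_gL'$ holds exactly whenever $L'$ contains a letter other than $M_g$ (so that $(L'f)(0)=0$), together with the normal-ordering of $T_gS_g^aT_g^b$ for $b\ge1$, which is valid on all of $\H(\D)$. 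Every coefficient produced this way is manifestly an integer independent of $g$, and $\delta_L$ is inherited from the unchanged suffix, exactly as the statement requires.
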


Throughout the rest of the paper we will use the following  two fundamental results. Recall that a $g$-operator is just a linear combination of $g$-words (not necessarily having the same number letters), and $g_r(z)=g(rz)$, for any $g\in\H(\D)$.

\begin{proposition}[{\cite[Proposition 2.4]{CasFabPasPelJFA2024}}]\label{prop:norm:dilations}  
Let $\omega$ be a radial weight, $0<p<\infty$, and let  $L_g$ be a $g$-operator, where $g\in\H(\D)$.
If  $L_g\in\BB(A^p_\omega)$  then $L_{g_r}\in\BB(A^p_\omega)$ and $\|L_{g_r}\|_{A^p_\omega}\lesssim \|L_g\|_{A^p_\omega}$, for any $0<r<1$.
Moreover, if ${\displaystyle\varliminf_{r\nearrow1}\|L_{g_r}\|_{A^p_\omega}<\infty}$,
then $L_g\in\BB(A^p_\omega)$ and 
${\displaystyle\|L_g\|_{A^p_\omega}\simeq\varliminf_{r\nearrow1}\|L_{g_r}\|_{A^p_\omega}}$.
 \end{proposition}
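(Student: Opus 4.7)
The cornerstone is an intertwining between each paraproduct and the dilation $D_r f(z):=f(rz)$. A direct substitution $\eta=r\zeta$ in the defining integrals of $M_g$, $S_g$ and $T_g$ gives
\[
M_{g_r}D_r=D_rM_g,\qquad S_{g_r}D_r=D_rS_g,\qquad T_{g_r}D_r=D_rT_g
\]
on $\H(\D)$, whence, by composition and linearity, any $g$-operator satisfies $L_{g_r}\circ D_r=D_r\circ L_g$, equivalently $L_{g_r}(F_r)=(L_gF)_r$ for every $F\in\H(\D)$. The second ingredient is the contractivity of $D_r$ on $A^p_\omega$ for any radial weight: since $|f|^p$ is subharmonic, the integral means $s\mapsto M_p(f,s)$ are increasing, so in polar coordinates
\[
\|D_rf\|^p_{A^p_\omega}=\int_0^1 M_p^p(f,rs)\,\omega(s)\,2s\,ds\le \int_0^1 M_p^p(f,s)\,\omega(s)\,2s\,ds=\|f\|^p_{A^p_\omega}.
\]

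For the first part, combining these two tools gives at once
\[
\|L_{g_r}(F_r)\|_{A^p_\omega}=\|(L_gF)_r\|_{A^p_\omega}\le\|L_g\|_{A^p_\omega}\|F\|_{A^p_\omega}\qquad(F\in A^p_\omega),
\]
so the desired estimate holds on the range of $D_r$. To extend it to the whole of $A^p_\omega$, I would approximate an arbitrary $G\in A^p_\omega$ by its own dilations $G_\sigma$: for each $\sigma<r$ the identity $G_\sigma=(G_{\sigma/r})_r$ places $G_\sigma$ in the range of $D_r$, and the estimate above yields $\|L_{g_r}G_\sigma\|_{A^p_\omega}\le\|L_g\|_{A^p_\omega}\|G\|_{A^p_\omega}$. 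Passing $\sigma\nearrow r^-$ and invoking Fatou's lemma together with the pointwise continuity of $L_{g_r}$ on $\H(\D)$ yields the bound at $G_r$, and the final step is to transfer it to $G$ itself.

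For the converse, since $g_r\to g$ locally uniformly on $\D$ and each paraproduct depends continuously on its symbol in the topology of $\H(\D)$, one has $L_{g_r}F\to L_gF$ pointwise on $\D$ for every $F\in\H(\D)$. Fatou's lemma then gives
\[
\|L_gF\|^p_{A^p_\omega}\le\varliminf_{r\nearrow 1}\|L_{g_r}F\|^p_{A^p_\omega}\le\bigl(\varliminf_{r\nearrow 1}\|L_{g_r}\|_{A^p_\omega}\bigr)^p\|F\|^p_{A^p_\omega},
\]
so $\|L_g\|_{A^p_\omega}\le\varliminf_{r\nearrow 1}\|L_{g_r}\|_{A^p_\omega}$; the matching upper bound is the first assertion.

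The delicate point is the final step of the first part: passing from the estimate at the dilated argument $G_r$ to an estimate at $G$ itself, uniformly in $r\in(0,1)$. The natural dilation approximation $G_\sigma\to G$ ($\sigma\nearrow 1$) is incompatible with the intertwining identity when $\sigma\ge r$, so these two limits must be reconciled. I would treat the regime $r$ close to $1$ by a joint diagonal limit (keeping $\sigma<r$ while both tend to $1$) and separately handle $r$ bounded away from $1$ by observing that $g_r\in H^\infty(\D)$, so each paraproduct with symbol $g_r$ is trivially controlled on $A^p_\omega$. Gluing the two regimes into a single uniform-in-$r$ estimate is the main technical point of the proof.
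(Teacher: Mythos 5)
Your two ingredients --- the intertwining $L_{g_r}\circ D_r=D_r\circ L_g$ and the contractivity of $D_r$ on $A^p_\omega$ via the monotonicity of integral means --- are the right ones (the paper itself imports this proposition from \cite{CasFabPasPelJFA2024} without proof), and your Fatou argument for the converse direction $\|L_g\|_{A^p_\omega}\le\varliminf_{r\to1^-}\|L_{g_r}\|_{A^p_\omega}$ is correct. But the forward direction is not proved, and the gap you flag at the end is genuine and is not closed by either of your proposed devices. What you actually establish is $\|L_{g_r}G_\sigma\|_{A^p_\omega}\le\|L_g\|_{A^p_\omega}\|G\|_{A^p_\omega}$ for $\sigma\le r$; letting $\sigma\nearrow r$ only recovers the bound at $G_r$, and since $D_r$ is not bounded below (e.g.\ $\|(z^n)_r\|_{A^p_\omega}=r^n\|z^n\|_{A^p_\omega}$) you cannot trade $\|G\|$ for a multiple of $\|G_r\|$. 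A ``joint diagonal limit $\sigma<r\to1$'' cannot prove a statement asserted for each \emph{fixed} $r\in(0,1)$, and the remark that $g_r\in H^\infty$ for $r$ away from $1$ at best yields boundedness of $L_{g_r}$ with a constant depending on $g$ and $r$, not the required bound $\lesssim\|L_g\|_{A^p_\omega}$ with a uniform constant (and even that boundedness is not trivial for $T_{g_r}$ on a general radial weight when $p<1$).

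The way to close the gap is to anti-dilate the given test function and apply the contractivity circle by circle rather than to the global norm. Fix $f\in A^p_\omega$ and note that $L_{g_r}f=(L_gf_{1/r})_r$ on $\D$, where $f_{1/r}(w):=f(w/r)$ is analytic on $D(0,r)$; this suffices because the right-hand side only evaluates $L_gf_{1/r}$ at points of modulus $r|z|<r$. Writing $M_p^p(h,s):=\frac1{2\pi}\int_0^{2\pi}|h(se^{i\theta})|^p\,d\theta$ and setting $F^{(t)}:=f_{t/r}\in\H(\D)$ for $t<r$, one gets for every $0<s<1$
\[
M_p^p(L_{g_r}f,s)=M_p^p\bigl(L_gf_{1/r},rs\bigr)=\lim_{t\to r^-}M_p^p\bigl(L_gF^{(t)},rs\bigr)\le\liminf_{t\to r^-}M_p^p\bigl(L_gF^{(t)},s\bigr),
\]
the last step by monotonicity of the integral means. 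Integrating against $\omega(s)2s\,ds$, applying Fatou's lemma along a sequence $t_k\to r^-$, and using $\|F^{(t)}\|_{A^p_\omega}\le\|f\|_{A^p_\omega}$ yields $\|L_{g_r}f\|_{A^p_\omega}\le\|L_g\|_{A^p_\omega}\|f\|_{A^p_\omega}$. The essential point you missed is that the output dilation must be made to land exactly on $f$ (which forces the anti-dilate $f_{1/r}$ on the input, defined only on $D(0,r)$ and handled radius by radius), instead of trying to reach $f$ as a limit of its own dilates $G_\sigma$ with $\sigma\le r$, which is impossible.
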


\begin{theorem}[{\cite[Theorems~1.2 and 1.3~a]{CasFabPasPelJFA2024}}]\label{thm:compo}
Let $\omega$ be a radial weight, $g\in \H(\D)$,  and $0<p<\infty$. If $L_g\in W_g(\ell,m,n)$, where $\ell,m,n\in\N_0$
 and $n\in\N$, is bounded on $\Ap2(0)$, then $T_g$ is bounded on $\Ap2$ and  
$\|T_g\|_{\Ap2}\lesssim \opnorm{L_g}_{\Ap2}^{1/(m+n)}$.
\end{theorem}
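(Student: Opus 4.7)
My plan is to combine the structural decomposition \eqref{eqn:global:decomposition:H0} with the radicality property of $\mathcal{T}(A^p_\omega)$ from \cite[Theorem~1.1]{CasFabPasPelJFA2024}, via a test-function argument on $A^p_{\omega_p}(0)$.

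First, by Proposition~\ref{prop:norm:dilations} I would reduce to the case $g\in\H(\overline{\D})$, seeking a bound uniform in the dilations $g_r$; this makes all compositions of paraproducts act unambiguously on polynomials and allows the free use of integration by parts.

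Second, on $\H_0(\D)$, the identity $S_gh+T_gh=M_gh$ combined with \eqref{eqn:global:decomposition:H0} rewrites
\[
L_g|_{\H_0(\D)} = \sum_{j=0}^{k} c_j\, S_g^{k-j}T_g^{n+j}, \qquad k=\ell+m,\ c_0=1.
\]
Each summand is further simplifiable via the elementary identities
\[
S_g^aT_g \;=\; \tfrac{1}{a+1}\,T_{g^{a+1}} \text{ (on $\H(\D)$)}
\quad\text{and}\quad
[S_g,T_g^b] \;=\; b\,T_g^{b+1} \text{ (on $\H_0(\D)$)},
\]
which follow from the pointwise identity $g^a g'=(g^{a+1})'/(a+1)$ and from integration by parts against $g$, respectively. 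Iterated use of these relations puts $L_g|_{\H_0(\D)}$ in a canonical form in which a well-identified principal contribution, with a nonzero universal coefficient depending only on $\ell,m,n$, is comparable (on $\H_0(\D)$) to $T_g^{m+n}$, while the remaining terms involve strictly fewer iterations of $T_g$.

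Third, I would isolate the principal term by testing the canonical form on a suitable family in $A^p_{\omega_p}(0)$, for instance normalized truncated reproducing kernels $(K_a^\omega-K_a^\omega(0))/\|K_a^\omega-K_a^\omega(0)\|_{A^p_{\omega_p}}$ that concentrate at $a\in\D$ as $|a|\to 1^-$ (or, alternatively, high-degree monomials $z^{N_0}$). On such a family the lower-order remainder is dominated by the leading term, and the hypothesis $\opnorm{L_g}_{A^p_{\omega_p}}<\infty$ gives
\[
\|T_{g^{m+n}}\|_{A^p_{\omega_p}}\;\lesssim\; \opnorm{L_g}_{A^p_{\omega_p}}.
\]
The quantitative form of the radicality property $\|T_{g^{m+n}}\|_{A^p_{\omega_p}}\simeq\|T_g\|_{A^p_{\omega_p}}^{m+n}$, valid for any radial weight $\omega$ by \cite[Theorem~1.1]{CasFabPasPelJFA2024}, then yields the desired inequality $\|T_g\|_{A^p_{\omega_p}}\lesssim \opnorm{L_g}_{A^p_{\omega_p}}^{1/(m+n)}$. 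The hardest step will be the combinatorial reduction to canonical form together with the accompanying test-function argument: one must track the coefficients $c_j$ of \eqref{eqn:global:decomposition:H0} through the iterated use of the paraproduct identities in order to identify the principal contribution and to verify non-vanishing of its universal coefficient, and the test family must be constructed so as to truly separate the principal term from the lower-order remainder in the norm of $A^p_{\omega_p}(0)$.
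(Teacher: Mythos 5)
The paper itself does not prove Theorem~\ref{thm:compo}: it is imported verbatim from \cite[Theorems~1.2 and 1.3~a]{CasFabPasPelJFA2024}, so there is no internal proof to compare against. Judged on its own, your sketch has a genuine gap at its central step. The preliminaries are fine — the dilation reduction via Proposition~\ref{prop:norm:dilations}, the decomposition \eqref{eqn:global:decomposition:H0}, and both algebraic identities are correct (indeed $[M_g,T_g]=T_g^2$ on $\H(\D)$, hence $[S_g,T_g^b]=bT_g^{b+1}$ on $\H_0(\D)$, and $S_g^aT_g=\tfrac1{a+1}T_{g^{a+1}}$). But these identities are exactly what produces \eqref{eqn:global:decomposition:H0} in the first place; iterating them does not compress the sum $\sum_{j=0}^k c_j S_g^{k-j}T_g^{n+j}$ any further, and in particular there is no ``canonical form'' whose principal contribution is comparable to $T_g^{m+n}$. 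A homogeneity count already rules this out: every term in the decomposition is homogeneous of degree $N=\ell+m+n$ in $g$, whereas $T_g^{m+n}$ and $T_{g^{m+n}}$ have degree $m+n$, so no identity or two-sided comparison between them can hold when $\ell\ge1$ (replace $g$ by $\lambda g$ and let $\lambda\to0$). The natural leading term is $S_g^{\ell+m}T_g^{n}=\tfrac1{\ell+m+1}T_{g^{\ell+m+1}}T_g^{n-1}$ on $\H_0(\D)$, and the whole difficulty — the raison d'\^etre of \cite{CasFabPasPelJFA2024} — is to show that the remaining terms $S_g^{k-j}T_g^{n+j}$ cannot cancel it.

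The mechanism you propose for isolating the leading term also fails at the stated level of generality. The theorem is for an \emph{arbitrary} radial weight, while the pointwise and norm estimates for $K_a^{\omega}$ in \eqref{5.3}--\eqref{eqn:estimate:Apomegap:norm:Bergman:kernel} are specific to $\omega\in\SW$; for a general radial weight no such kernel asymptotics are available, and testing on monomials $z^{N_0}$ only controls integral means, which is far too weak to bound $\|T_g\|_{\Ap2}$. Even where kernel estimates exist, the assertion that ``the lower-order remainder is dominated by the leading term'' on the test family is precisely the cancellation statement one is trying to prove, not something one may assume. What actually closes arguments of this type — in \cite{CasFabPasPelJFA2024} and in Sections~\ref{section:reduction:to:SmTn}--\ref{section:proof:of:proposition4.2} of this paper — is a bootstrap: one works with the dilations $g_r$, for which all the relevant quantities are finite, derives a self-improving inequality of the shape $x\lesssim\opnorm{L_g}_{\Ap2}+\opnorm{L_g}_{\Ap2}^{\theta}\,x^{1-\theta}$ for $x=\|T_{g_r}\|_{\Ap2}^{m+n}$ (compare the chains of estimates following \eqref{eqn:sharp:lower:estimate:norm:word} and in the proofs of Propositions~\ref{prop:upper:estimate:norm:Q} and \ref{lem:sharp:lower:estimate:norm:word:2}), and then lets $r\to1^{-}$ via Proposition~\ref{prop:norm:dilations}. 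Your sketch contains no such mechanism, so the key inequality $\|T_{g^{m+n}}\|_{\Ap2}\lesssim\opnorm{L_g}_{\Ap2}$ remains unjustified, and with it the conclusion.
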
 

It is worth noticing that bearing in mind Theorem~\ref{thm:compo} together with Proposition~\ref{pr:radialized:symbol:Bloch} it is enough to prove Theorem~\ref{thm:sharp:estimate:norm:word} for $g\in\H(\overline{\D})$.

\subsection{Estimates of the Bergman reproducing kernel for $A^2_{\omega}$}\label{subsec:Bergman:kernels}
$\mbox{}$\newline
In this section we recall  some known estimates  of the Bergman reproducing kernel for $A^2_{\omega}$.
Let $\omega\in\SW$. Then the functions in $\Ap2$, $0<p<\infty$, satisfy the estimate  
\begin{equation}\label{eqn:growth:of:Apomegap:functions}
|f(z)|\lesssim \tau(z)^{-\frac{2}{p}}\om(z)^{-\frac12}\|f\|_{\Ap2} \qquad(f\in\Ap2,\,z\in\D)
\end{equation}
(see~{\cite[Lemma~2.2]{Pau-Pelaez:JFA2010}}), which shows that the point evaluation functionals on $\Ap2$ are bounded. As a consequence, since $A^2_{\omega}$ is a Hilbert space, for any $a\in\D$ there is a unique function $K_a^\omega\in A^2_{\omega}$ such that
\[
f(a)=\int_{\D}f(w)\,\overline{K_a^\omega(w)}\,\omega(w)\,dA(w)
\qquad(f\in A^2_{\omega}).
\]
The function $K_a^\omega$ is called the {\em Bergman reproducing kernel for $A^2_{\omega}$} at $a$. Then it is well known that
\begin{equation}\label{eqn:power:series:Bergman:kernel}
K_a^\omega(w)=\sum_j\frac{\overline{a}^j}{\alpha_j}\,w^j\qquad(w\in\D),
\end{equation}
where the convergence of the series is in $A^2_{\omega}$ and 
$\alpha_j=2\int_0^1r^{2j+1}\omega(r)\,dr$.
Since $\alpha_j\ge\int_r^1s^{2j+1}\omega(s)\,ds\ge r^{2j+1}\int_r^1\omega(s)\,ds$, for any $0<r<1$, we have that
\[
\limsup_{j\to\infty}\alpha_j^{-1/j}\le\inf_{0<r<1}1/r^2=1,
\]
and therefore the radius of convergence $R_a$ of the power series in \eqref{eqn:power:series:Bergman:kernel} satisfies
\[
R_a=\frac1{|a|}\biggl(\limsup_{j\to\infty}\alpha_j^{-1/j}\biggr)^{-1}\ge\frac1{|a|}>1.
\]
 Hence $K_a^{\om}\in\H(\overline{\D})$, for every $a\in\D$.
\medskip

Since $\SW$ is included in the class of weights studied in~{\cite{HLS}}, the estimates of the Bergman kernel obtained there apply to $K^{\omega}_a$, for $\omega\in\SW$. Namely, 
 there is $\eta>0$ such that $K_a^\omega$ satisfies the global upper estimate 
\begin{equation} \label{5.3} 
|K_a^\omega(z)| 
\lesssim \frac{\omega(a)^{-\frac12}\omega(z)^{-\frac12}}{\tau(a)\tau(z)} \,e^{-\eta d_\tau(a,z)}\qquad (a,z\in\D),  
\end{equation}
and there is $\delta>0$ such that $K_a^\omega$ satisfies the local lower estimate 
\begin{equation} \label{5.3bis}
|K_a^\omega(z)|\gtrsim
\frac{\omega(a)^{-\frac12}\omega(z)^{-\frac12}}{\tau(a)\tau(z)}
\qquad(a,z\in\D,\,d_\tau(a,z)\le\delta).  
\end{equation}
(see \cite[Theorem 3.2]{HLS}).
In addition, the  factor $e^{-\eta d_\tau(a,z) }$ has, for any $M >0$, the upper estimate 
\begin{equation}\label{5.4} 
e^{-\eta d_\tau(a,z) }\lesssim
\Big(\frac{\min(\tau(a),\tau(z) )}{|a-z|}\Big)^M
\qquad(a,z\in\D)
\end{equation}
(see \cite[(23)]{HLS}).
Here $d_\tau$ is the distance defined by 
\begin{equation*} 
d_\tau(a,z) = \inf_\gamma \int_0^1\frac{|\gamma'(t)|}{\tau(\gamma(t))}\,dt\qquad(a,z\in\D),
\end{equation*}
where the infimum is taken over all piecewise $C^1$ curves 
$\gamma:[0,1] \to \D$ with $\gamma(0)=a$ and $\gamma(1)=z$.
Recall that $d_{\tau}$ is comparable to the distance on $\D$ induced by the Bergman metric $\frac12\frac{\partial^2\,}{\partial z\partial\overline{z}}\log K^{\omega}_z(z)\,dz\otimes d\overline{z}$ 
(see \cite[page 355]{Constantin:Ortega-Cerda}).

Note that if $\delta$ is as in 
Proposition~{\ref{prop:properties:varphi} \ref{item6:properties:varphi}} we have 
\begin{equation*}
d_{\tau}(a,z)\le|z-a|\int_0^1\frac{dt}{\tau\bigl(a+t(z-a)\bigr)}\simeq\frac{|z-a|}{\tau(a)}\qquad(a\in\D,\,z\in D_{\delta}(a)),
\end{equation*}
and, in particular, $D_{a,\delta}\subset\{z\in\D:d_{\tau}(z,a)<C\delta\}$, for $\delta>0$ small enough and for an absolute constant $C>0$. 
As a consequence, \eqref{5.3} and \eqref{5.3bis} give 
\begin{equation} \label{5.3tris}
|K_a^\omega(\zeta)|\simeq
\frac{\omega(a)^{-\frac12}\omega(z)^{-\frac12}}{\tau(a)\tau(\zeta)}\qquad(a\in\D,\,\zeta\in D_{\delta}(a)),  
\end{equation}
for $\delta>0$ small enough. Finally, we have the following weighted $L^{\infty}$ and $A^p_{\omega_p}$ norm estimates of the Bergman kernel
(see \cite[Corollary~3.2]{HLS}):
\begin{align}
\label{eqn:estimate:weighted:Linfty:norm:Bergman:kernel}
\sup_{z\in\D}|K_a^{\omega}(z)|\om(z)^{\frac12}
&\simeq\om(a)^{-\frac12}\tau^{-2}(a)\qquad(a\in\D).
\\
\label{eqn:estimate:Apomegap:norm:Bergman:kernel}
\|K_a^{\omega}\|_{A^p_{\omega_p}}
&\simeq \om(a)^{-\frac12}\tau(a)^{\frac2p-2}\qquad(a\in\D).
\end{align}

\section{$\varphi$-Bloch classes of power functions}\label{section:symbols}

In this section we will study our $\varphi$-Bloch classes of power functions 
$\B^q_{\varphi}$.
 From now on we will assume that $\varphi$ satisfies the three conditions \ref{rapidly:decreasing:condition:a}-\ref{rapidly:decreasing:condition:c}.

The first result will allow us to reduce the estimate of  $\|g\|_{\B^q_\varphi}$ to the one of $\|g_r\|_{\B^q_\varphi}$, where $g_r(z):=g(rz)$, for $0<r<1$.
\begin{proposition}\label{pr:radialized:symbol:Bloch}
\begin{equation}\label{eqn:radialized:symbol:Bloch}
\sup_{0<r<1}\|g_r\|_{\B^q_\varphi}=
\|g\|_{\B^q_\varphi}
=\lim_{r\to1^{-}}\|g_r\|_{\B^q_\varphi}
\qquad(g\in\H(\D),\,q\ge1).
\end{equation}
\end{proposition}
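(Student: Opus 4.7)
The plan is to establish the chain
\[
\|g\|_{\B^q_\varphi}\ \ge\ \sup_{0<r<1}\|g_r\|_{\B^q_\varphi}\ =\ \lim_{r\to1^-}\|g_r\|_{\B^q_\varphi}\ \ge\ \|g\|_{\B^q_\varphi},
\]
which will give the three equalities at once. Three ingredients are needed: a dilation upper bound, monotonicity of $r\mapsto\|g_r\|_{\B^q_\varphi}$, and a pointwise lower estimate obtained by plugging in test points.

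For the upper bound, I would use $g_r'(z)=r\,g'(rz)$ and the substitution $w=rz$ to rewrite
\[
\|g_r\|_{\B^q_\varphi}^q
=\sup_{w\in r\D}\frac{r\,q\,|g(w)|^{q-1}|g'(w)|}{1+\varphi'(|w|/r)},
\]
and then absorb the factor $r<1$ using the monotonicity of the auxiliary map $s\mapsto s\varphi'(s)=\int_0^s t\,\Delta\varphi(t)\,dt$, which is increasing on $[0,1)$ by condition \ref{rapidly:decreasing:condition:a}. Applied at the pair $|w|<|w|/r$ this yields $\varphi'(|w|/r)\ge r\,\varphi'(|w|)$ and hence $1+\varphi'(|w|/r)\ge r\bigl(1+\varphi'(|w|)\bigr)$, from which $\|g_r\|_{\B^q_\varphi}\le\|g\|_{\B^q_\varphi}$ for every $r\in(0,1)$.

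Monotonicity then follows from the identity $g_r=(g_s)_{r/s}$ combined with the same dilation estimate applied to $g_s\in\H(\D)$ and contraction factor $r/s<1$: it gives $\|g_r\|_{\B^q_\varphi}\le\|g_s\|_{\B^q_\varphi}$ whenever $0<r\le s<1$, so the supremum over $r$ equals the limit as $r\to1^-$. For the lower bound, I would fix $z\in\D$ and use $z$ itself as a test point in the definition of $\|g_r\|_{\B^q_\varphi}$ to obtain
\[
\|g_r\|_{\B^q_\varphi}^q\ \ge\ \frac{q\,|g(rz)|^{q-1}\,r\,|g'(rz)|}{1+\varphi'(|z|)},
\]
and let $r\to1^-$; continuity of $g$ and $g'$ at $z$ gives $\liminf_{r\to1^-}\|g_r\|_{\B^q_\varphi}^q\ge\|g\|_{\B^q_\varphi}^q$ after taking the supremum over $z$. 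The argument is elementary and I do not anticipate any real obstacle; the only subtle point is recognising that the correct monotonicity property is that of $s\varphi'(s)$ (not of $\varphi'$ itself), which is exactly what is needed to compensate for the dilation factor $r<1$ in Step~1.
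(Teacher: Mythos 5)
Your proof is correct and takes essentially the same route as the paper: the dilation identity $|\nabla|g_r|^q|(z)=r\,|\nabla|g|^q|(rz)$ gives the upper bound $\|g_r\|_{\B^q_\varphi}\le\|g\|_{\B^q_\varphi}$, and pointwise continuity of $g$ and $g'$ gives the reverse inequality for the $\liminf$. The only (harmless) variations are that you absorb the factor $r$ through the monotonicity of $s\mapsto s\varphi'(s)$ rather than dropping $r\le 1$ and using that $\varphi'$ is non-decreasing as the paper does, and that you insert an explicit monotonicity step via $g_r=(g_s)_{r/s}$, where the paper instead closes with the chain $\liminf\le\limsup\le\sup$.
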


\begin{proof}
Let $g\in\H(\D)$ and $q\ge1$. Then
\begin{equation}\label{eqn1:radialized:symbol:Bloch}
\bigl|\nabla|g_r|^q\bigr|(z)=r\bigl|\nabla|g|^q\bigr|(rz)
\qquad(z\in\D,\,0<r<1),
\end{equation}
so 
$\bigl|\nabla|g_r|^q\bigr|(z)
\le\|g\|^q_{\B^q_{\varphi}}\bigl(1+\varphi'(r|z|)\bigr)
\le\|g\|^q_{\B^q_{\varphi}}\bigl(1+\varphi'(|z|)\bigr)$,
and therefore 
${\displaystyle
\sup_{0<r<1}\|g_r\|_{\B^q_\varphi}\le\|g\|_{\B^q_\varphi}}$.
Moreover, ${\displaystyle \|g\|_{\B^q_\varphi}\le\liminf_{r\to1^{-}}\|g_r\|_{\B^q_\varphi}}$, since
the continuity of $\bigl|\nabla|g|^q\bigr|$ on $\D$ and  \eqref{eqn1:radialized:symbol:Bloch} imply that 
\begin{align*}
\bigl|\nabla|g|^q\bigr|(z)
&=\lim_{r\to1^{-}}r\bigl|\nabla|g|^q\bigr|(rz)
   =\lim_{r\to1^{-}}\bigl|\nabla|g_r|^q\bigr|(z)
\\
&\le \liminf_{r\to1^{-}}\biggl(\|g_r\|_{\B^q_\varphi}^q\bigl(1+\varphi'(r|z|)\bigr)\biggr)
\\
&\le \biggl(\liminf_{r\to1^{-}}\|g_r\|_{\B^q_\varphi}^q\biggr)
       \bigl(1+\varphi'(|z|)\bigr),\quad\mbox{for every $z\in\D$,}
\end{align*}
where the last inequality holds because $\varphi'$ is non-decreasing.
Finally, the chain of inequalities 
\[
\liminf_{r\to1^{-}}\|g_r\|_{\B^q_\varphi}
\le\limsup_{r\to1^{-}}\|g_r\|_{\B^q_\varphi}
\le\sup_{0<r<1}\|g_r\|_{\B^q_\varphi}
\]
ends the proof.
\end{proof}

Next we will give a Lipschitz-type description of $\B^q_{\varphi}$ with respect to the distance $\beta_{\varphi}$ on $\D$ induced by the Riemannian metric
$\frac12\bigl(1+\varphi'(|z|)\bigr)(dz\otimes d\overline{z})$, that is,
\begin{equation}
\beta_{\varphi}(z,w):=
\inf_{\gamma\in\Gamma(z,w)}
\int_0^1\bigl(1+\varphi'(|\gamma(t)|)\bigr)\,|\gamma'(t)|\,dt
\qquad(z,w\in\D),
\end{equation}
where $\Gamma(z,w)$ is the set of all piecewice $C^1$ curves $\gamma:[0,1]\to\D$ satisfying that $\gamma(0)=z$ and $\gamma(1)=w$.
For example, if $\varphi_{\alpha}(z)=\frac{\alpha}2\log\frac1{1-|z|^2}$,
$\alpha>0$, then $\beta_{\varphi_{\alpha}}$ is comparable to the the hyperbolic distance on $\D$, {\em i.e.}
\[
\beta_{\varphi_{\alpha}}(z,w)\simeq
\log\frac{1+|\frac{z-w}{1-\overline{w}z}|}{1-|\frac{z-w}{1-\overline{w}z}|}
\qquad(z,w\in\D).
\]
Note that
\begin{equation}\label{eqn:growth:betaphiz0}
\beta_{\varphi}(z,0)\le\bigl(1+\tfrac1{\varphi(0)}\bigr)\,\varphi(|z|)
\qquad(z\in\D),
\end{equation}
because
\begin{align*}
\beta_{\varphi}(z,0)
&\le\int_0^1\bigl(1+\varphi'(t|z|)\bigr)\,|z|\,dt
  =\int_0^{|z|}\bigl(1+\varphi'(r)\bigr)\,dr
\\
&\le 1+\varphi(|z|)\le\bigl(1+\tfrac1{\varphi(0)}\bigr)\,\varphi(|z|),
\quad\mbox{for any $z\in\D$.} 
\end{align*}

The following proposition estimates the size of $\beta_{\varphi}(z,w)$, for $z\in\D$ and $w\in\D$ in a small neighborhood of $z$.

\begin{proposition}
Let $\delta>0$ be as in Proposition~{\textup{\ref{prop:properties:varphi}}\,\,\ref{item6:properties:varphi}}.
Then 
\begin{equation}\label{eqn:behavior:betaphi:close:points}
\beta_{\varphi}(z,w)\simeq|z-w|\bigl(1+\varphi'(|z|)\bigr)
\qquad(z\in\D,\,w\in\overline{D}_{\delta}(z)).
\end{equation}
\end{proposition}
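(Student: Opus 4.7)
The plan is to handle the two inequalities separately, using Proposition~\ref{prop:properties:varphi}~\ref{item6:properties:varphi} to control $1+\varphi'$ locally. A mild technical point: the cited estimate holds on the open disc $D_{\delta}(z)$, but by choosing $\delta$ slightly smaller at the start we may freely assume $1+\varphi'(|\zeta|)\simeq 1+\varphi'(|z|)$ for every $\zeta\in\overline{D}_{\delta}(z)$.

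\paragraph{Upper bound.} For $w\in\overline{D}_{\delta}(z)$ I would simply test the definition of $\beta_\varphi$ against the straight-line curve $\gamma(t)=z+t(w-z)$, $t\in[0,1]$. Since $|\gamma(t)-z|=t|w-z|\le\delta\tau(z)$, the curve stays in $\overline{D}_{\delta}(z)$, so $1+\varphi'(|\gamma(t)|)\simeq 1+\varphi'(|z|)$, and
\[
\beta_{\varphi}(z,w)\le\int_0^1\!\bigl(1+\varphi'(|\gamma(t)|)\bigr)|w-z|\,dt
\lesssim\bigl(1+\varphi'(|z|)\bigr)|w-z|.
\]

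\paragraph{Lower bound.} Given any $\gamma\in\Gamma(z,w)$, I split into two cases according to whether $\gamma$ leaves $\overline{D}_{\delta}(z)$.

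\textit{Case 1: $\gamma([0,1])\subset\overline{D}_{\delta}(z)$.} By the local comparability,
\[
\int_0^1\!\bigl(1+\varphi'(|\gamma(t)|)\bigr)|\gamma'(t)|\,dt
\gtrsim\bigl(1+\varphi'(|z|)\bigr)\int_0^1\!|\gamma'(t)|\,dt
\ge\bigl(1+\varphi'(|z|)\bigr)|w-z|,
\]
since the arc-length of $\gamma$ dominates the Euclidean distance between its endpoints.

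\textit{Case 2: $\gamma$ exits $\overline{D}_{\delta}(z)$.} By continuity there is a first $t_0\in(0,1]$ with $|\gamma(t_0)-z|=\delta\tau(z)$. On $[0,t_0]$ the curve lies in $\overline{D}_{\delta}(z)$, so
\[
\int_0^1\!\bigl(1+\varphi'(|\gamma(t)|)\bigr)|\gamma'(t)|\,dt
\ge\int_0^{t_0}\!\bigl(1+\varphi'(|\gamma(t)|)\bigr)|\gamma'(t)|\,dt
\gtrsim\bigl(1+\varphi'(|z|)\bigr)\delta\tau(z),
\]
and the bound $|w-z|\le\delta\tau(z)$ gives $\gtrsim(1+\varphi'(|z|))|w-z|$. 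Taking the infimum over $\gamma$ yields the lower bound.

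\paragraph{Main obstacle.} The only nontrivial point is Case~2: a curve is free to leave the small disc and come back, so one has to rule out that such wandering can shorten $\beta_{\varphi}$-length below the desired order. The exit-time argument resolves this cleanly, since the mere act of crossing the boundary of $\overline{D}_{\delta}(z)$ already contributes Euclidean length $\delta\tau(z)\ge|w-z|$, and on that initial portion of the curve $1+\varphi'(|\gamma(t)|)$ is still comparable to $1+\varphi'(|z|)$ by Proposition~\ref{prop:properties:varphi}~\ref{item6:properties:varphi}.
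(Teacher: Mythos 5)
Your proof is correct and follows essentially the same route as the paper: the straight-line segment gives the upper bound, and the lower bound is obtained by the same two-case analysis with the first-exit-time argument for curves that leave $\overline{D}_{\delta}(z)$. The only cosmetic difference is that the paper handles the open-versus-closed disc issue in the upper bound by first using the monotonicity of $\varphi'$ to pass to the radial point $|z|+t|w-z|$, whereas you shrink $\delta$ slightly; both are fine.
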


\begin{proof}
%Moreover, for $w\in\partial D_{\delta}(z)$ we have that 
If $z,w\in\D$ then
\begin{align*}
\beta_{\varphi}(z,w)
&\le |z-w|\int_0^1\bigl(1+\varphi'(|z+t(w-z)|)\bigr)\,dt
\\
&\le |z-w|\int_0^1\bigl(1+\varphi'(|z|+t|w-z|)\bigr)\,dt, 
\end{align*}
so
\begin{equation}\label{eqn1:behavior:betaphi:close:points}
\beta_{\varphi}(z,w)\lesssim|z-w|\bigl(1+\varphi'(|z|)\bigr)
\qquad(z\in\D,\,w\in\overline{D}_{\delta}(z)).
\end{equation}
On the other hand, let  $\gamma\in\Gamma(z,w)$, with $z\in\D$ and
 $w\in\overline{D}_{\delta}(z)$. 
When $\gamma([0,1])\subset\overline{D}_{\delta}(z)$, we have that 
\begin{equation*}
\int_0^1\bigl(1+\varphi'(|\gamma(t)|)\bigr)|\gamma'(t)|\,dt
\simeq
\bigl(1+\varphi'(|z|)\bigr)\int_0^1|\gamma'(t)|\,dt
\ge|z-w|(1+\varphi'(|z|)\bigr).
\end{equation*}
If $\gamma([0,1])\not\subset\overline{D}_{\delta}(z)$, then 
$t_0=\inf\{t\in[0,1]:\gamma(t)\in\partial D_{\delta}(z)\} $
satisfy that $0<t_0\le1$, $\gamma(t_0)\in\partial D_{\delta}(z)$, and $\gamma(t)\in\overline{D}_{\delta}(z)$, for $t\in[0,t_0]$,   so 
\begin{align*}
\int_0^1\bigl(1+\varphi'(|\gamma(t)|)\bigr)|\gamma'(t)|\,dt
&\gtrsim
\bigl(1+\varphi'(|z|)\bigr)\int_0^{t_0}|\gamma'(t)|\,dt
\\
&\ge
\bigl(1+\varphi'(|z|)\bigr)\,|\gamma(t_0)-z|
=\delta\tau(z)\bigl(1+\varphi'(|z|)\bigr)
\\
&\ge|z-w|\bigl(1+\varphi'(|z|)\bigr).
\end{align*}
Therefore
\begin{equation}\label{eqn2:behavior:betaphi:close:points}
\beta_{\varphi}(z,w)\gtrsim|z-w|\bigl(1+\varphi'(|z|)\bigr)
\qquad(z\in\D,\,w\in\overline{D}_{\delta}(z)).
\end{equation}
Hence \eqref{eqn:behavior:betaphi:close:points} directly follows from \eqref{eqn1:behavior:betaphi:close:points} and \eqref{eqn2:behavior:betaphi:close:points}.
\end{proof}

\begin{proposition}
\begin{equation}\label{eqn:Bqphi:Lipschitz}
\|g\|_{\B^q_\varphi}^q
=\sup_{\substack{z,w\in\D\\ z\ne w}}
\frac{\bigl||g(z)|^q-|g(w)|^q\bigr|}{\beta_{\varphi}(z,w)}
\qquad(g\in\H(\D),q\ge1).
\end{equation}
\end{proposition}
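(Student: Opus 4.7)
The plan is to establish the two inequalities defining the claimed equality; call $M(g)$ the supremum on the right-hand side. I assume $g\not\equiv 0$ (the statement being trivial otherwise), so that the zeros of $g$ form a discrete subset of $\D$.

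To prove the upper bound $M(g)\le\|g\|_{\B^q_\varphi}^q$ I would integrate along curves. Fix distinct $z,w\in\D$ and $\gamma\in\Gamma(z,w)$, and set $\psi:=|g|^q$. Then $\psi$ is continuous on $\D$ and of class $C^1$ off the discrete zero set of $g$, with
\[
|\nabla\psi|(\zeta)=q|g(\zeta)|^{q-1}|g'(\zeta)|\qquad(\zeta\in\D),
\]
a quantity which is continuous and hence locally bounded on $\D$. Consequently $\psi$ is locally Lipschitz on $\D$, so $\psi\circ\gamma$ is absolutely continuous on $[0,1]$. At any $t_0$ where $(\psi\circ\gamma)'$ and $\gamma'$ exist, factoring
\[
\frac{\psi(\gamma(t_0+h))-\psi(\gamma(t_0))}{h}=\frac{\psi(\gamma(t_0+h))-\psi(\gamma(t_0))}{\gamma(t_0+h)-\gamma(t_0)}\cdot\frac{\gamma(t_0+h)-\gamma(t_0)}{h}
\]
and using the $\limsup$ definition of $|\nabla\psi|$ yields $|(\psi\circ\gamma)'(t_0)|\le|\nabla\psi|(\gamma(t_0))|\gamma'(t_0)|$. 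The fundamental theorem of calculus then gives
\[
\bigl||g(w)|^q-|g(z)|^q\bigr|\le\int_0^1|\nabla\psi|(\gamma(t))|\gamma'(t)|\,dt\le\|g\|_{\B^q_\varphi}^q\int_0^1\bigl(1+\varphi'(|\gamma(t)|)\bigr)|\gamma'(t)|\,dt.
\]
Taking the infimum over $\gamma\in\Gamma(z,w)$ and the supremum over $z\ne w$ gives $M(g)\le\|g\|_{\B^q_\varphi}^q$.

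For the reverse inequality $M(g)\ge\|g\|_{\B^q_\varphi}^q$ it suffices to verify the pointwise bound $(1+\varphi'(|z_0|))\,M(g)\ge|\nabla|g|^q|(z_0)$ for every $z_0\in\D$, and then take the supremum in $z_0$. For $w$ close to $z_0$, I would test the definition of $\beta_\varphi$ against the straight segment $\gamma(t)=z_0+t(w-z_0)$; continuity of $\varphi'$ on $[0,1)$ yields
\[
\beta_\varphi(z_0,w)\le\int_0^1\bigl(1+\varphi'(|z_0+t(w-z_0)|)\bigr)|w-z_0|\,dt=|w-z_0|\bigl(1+\varphi'(|z_0|)+o(1)\bigr)\text{ as }w\to z_0.
\]
Combining this with the $\limsup$ definition of $|\nabla|g|^q|(z_0)$ produces
\[
M(g)\ge\limsup_{w\to z_0}\frac{\bigl||g(w)|^q-|g(z_0)|^q\bigr|}{\beta_\varphi(z_0,w)}\ge\frac{|\nabla|g|^q|(z_0)}{1+\varphi'(|z_0|)},
\]
which is what was needed.

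The delicate point is the upper bound, specifically justifying the chain-rule inequality for $\psi\circ\gamma$ when $1\le q<2$ and $g$ has zeros along the image of $\gamma$, where $\psi=|g|^q$ is not classically differentiable. The remedy is exactly that such zeros are isolated, that $|\nabla\psi|$ is continuous and is an upper bound for the local Lipschitz constant of $\psi$, and hence $\psi\circ\gamma$ is absolutely continuous with the correct a.e.\ derivative bound. The lower bound is by comparison a soft infinitesimal argument using only the straight-segment upper estimate for $\beta_\varphi$.
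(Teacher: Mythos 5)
Your proof is correct and follows essentially the same route as the paper's: the upper bound for the Lipschitz supremum by integrating $|\nabla|g|^q|$ along curves in $\Gamma(z,w)$, and the lower bound by testing $\beta_\varphi$ against straight segments and using the $\limsup$ definition of $|\nabla|g|^q|$. Your extra care in justifying the chain-rule inequality at the zeros of $g$ when $1\le q<2$ is in fact a welcome refinement, since the paper simply asserts that $|g|^q\in C^1(\D)$ for $q\ge1$, which is not literally true at zeros of $g$.
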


\begin{proof} Let $M_{g,\varphi}$ be the supremum at the statement, and let
 $\gamma\in\Gamma(z,w)$, where $z,w\in\D$.
Then, since $|g|^q\in C^1(\D)$ (because $q\ge1$), we have  that
\begin{align*}
\bigl||g(z)|^q-|g(w)|^q\bigr|
&\le\int_0^1|\nabla|g|^q|(\gamma(t))\,|\gamma'(t)|\,dt
\\
&\le
\|g\|_{\B^q_\varphi}^q
\int_0^1(1+\varphi'(|\gamma(t)|))\,|\gamma'(t)|\,dt.
\end{align*}
Therefore $M_{g,\varphi}\le\|g\|_{\B^q_\varphi}^q$. 
On the other hand,  
\begin{equation*}
|\nabla|g|^q|(z)
=\limsup_{w\to z}
\frac{||g(w)|^q-|g(z)|^q|}{|w-z|}
\le M_{g,\varphi}
\limsup_{w\to z}
\frac{\beta_{\varphi}(w,z)}{|w-z|}
\quad(z\in\D).
\end{equation*}
But
\begin{equation*}
\limsup_{w\to z}\frac{\beta_{\varphi}(w,z)}{|w-z|}
\le\lim_{w\to z}\int_0^1\bigl(1+\varphi'(|z+t(w-z)|)\bigr)\,dt
=1+\varphi'(|z|),
\end{equation*}
and hence we obtain that $\|g\|_{\B^q_\varphi}^q\le M_{g,\varphi}$, which completes the proof.
\end{proof}

Next we prove the radicality estimate \eqref{eq:i1} which will be a key tool to prove
our main result.

\begin{proof}[{\bf Proof of Theorem \ref{prop:radicality:estimateintro}}]

Assume that $\|g\|_{\B^{q_2}_\varphi}=1$. Then
\begin{equation*}
|\nabla|g|^{q_1}|(z)=\tfrac{q_1}{q_2}|g(z)|^{q_1-q_2}|\nabla|g|^{q_2}|(z)
\le\tfrac{q_1}{q_2}|g(z)|^{q_1-q_2}\bigl(1+\varphi'(|z|)\bigr),
\end{equation*}
so $|\nabla|g|^{q_1}|(z)\le\tfrac{q_1}{q_2}\bigl(1+\varphi'(|z|)\bigr)$, 
whenever $|g(z)|\ge1$.\vspace*{3pt}

Now let $z\in\D$ such that  $|g(z)|<1$. We want to estimate $|g'(z)|$ from above. Let $\delta>0$ as in Proposition~{\ref{prop:properties:varphi}\,\,\ref{item6:properties:varphi}}.
 Recall that, by Cauchy estimates, 
\[
|g'(z)|\le\tfrac1{\delta\tau(z)}\sup_{w\in\partial D_{\delta}(z)}|g(w)|
=\tfrac1{\delta\tau(z)}\,M_{\delta}(z).
\]
But \eqref{eqn:Bqphi:Lipschitz} shows that 
\begin{equation*}
|g(w)|^{q_2}
\le \bigl||g(w)|^{q_2}-|g(z)|^{q_2}\bigr|+|g(z)|^{q_2}
\le\beta_{\varphi}(w,z)+1\qquad(w\in\D),
\end{equation*}
while \eqref{eqn:behavior:betaphi:close:points} gives that
\begin{equation*}
\sup_{w\in\partial D_{\delta}(z)}\beta_{\varphi}(w,z)
\simeq\tau(z)\bigl(1+\varphi'(|z|)\bigr).
\end{equation*}
It follows that $M_{\delta}(z)\lesssim \tau(|z|)^{\frac1{q_2}}\bigl(1+\varphi'(|z|)\bigr)^{\frac1{q_2}}$
 and therefore
\[
|g'(z)|
\le\tfrac1{\delta\tau(z)}M_{\delta}(z)
\lesssim\tau(|z|)^{\frac1{q_2}-1}\bigl(1+\varphi'(|z|)\bigr)^{\frac1{q_2}}
\lesssim1+\varphi'(|z|),
\]
since $q_2>1$ and $\tau(|z|)\bigl(1+\varphi'(|z|)\bigr)\to\infty$, as $|z|\to1^{-}$.
 Hence we conclude that $|\nabla|g|^{q_1}|(z)\lesssim1+\varphi'(|z|)$, and that ends the proof.
\end{proof}

We end this section with a growth description of $\B^q_{\varphi}$ when $\varphi$ also satisfies
\begin{equation}\label{varphi}
		\sup_{0\le r<1}\frac{\varphi''(r)\varphi(r)}{(1+\varphi'(r))^2}<\infty.
	\end{equation}
Namely, we will show that if $\varphi$ satisfies the extra condition \eqref{varphi} then $\B^q_{\varphi}$ coincides with the growth class $H^{\infty,q}_\varphi$ defined by \eqref{eqn:def:Hintyqvarphi}.

\begin{proposition}\label{prop:Bqphi:as:a:growth:space}
Assume that $\varphi$ satisfies \eqref{varphi}
\textup{(}besides conditions \ref{rapidly:decreasing:condition:a}-\ref{rapidly:decreasing:condition:c}\textup{)}, and let $\psi(r)=r+\varphi(r)$. Then, for any $q\ge1$, we have the identities
\begin{equation}\label{eqn:Bqphi:as:a:growth:space}
H^{\infty,q}_{\varphi}
=\{g\in\H(\D):g'\in H^{\infty,q}_{\psi^{1-q}(\psi')^q}\}
=\B^q_{\varphi},
\end{equation}
\end{proposition}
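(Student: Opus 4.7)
I will establish the identities in \eqref{eqn:Bqphi:as:a:growth:space} by proving the chain
\[
\B^q_\varphi \,\subseteq\, H^{\infty,q}_\varphi \,\subseteq\, \{g\in\H(\D): g'\in H^{\infty,q}_{\psi^{1-q}(\psi')^q}\} \,\subseteq\, \B^q_\varphi.
\]
Since $\varphi(r)\to\infty$ (Proposition~\ref{prop:properties:varphi}\,\ref{item4:properties:varphi}) and $\varphi\ge\varphi(0)>0$, one has $\psi=r+\varphi\simeq 1+\varphi$ and $\psi'=1+\varphi'$, so the middle condition is equivalent to $|g'(z)|\lesssim \varphi(|z|)^{(1-q)/q}(1+\varphi'(|z|))$.

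The first inclusion is immediate: combining \eqref{eqn:Bqphi:Lipschitz} with \eqref{eqn:growth:betaphiz0},
\[
|g(z)|^q \le |g(0)|^q + \|g\|_{\B^q_\varphi}^q\,\beta_\varphi(z,0) \lesssim \|g\|_{\B^q_\varphi}^q\,\varphi(|z|) + |g(0)|^q \lesssim \varphi(|z|),
\]
since $\varphi\gtrsim 1$. For the third inclusion, assuming $|g'(z)|\le C\varphi(|z|)^{(1-q)/q}(1+\varphi'(|z|))$, I would integrate along the radial segment from $0$ to $z$: the dominant piece $\int_0^{|z|}\varphi(s)^{(1-q)/q}\varphi'(s)\,ds = q\bigl(\varphi(|z|)^{1/q}-\varphi(0)^{1/q}\bigr)$ is explicit, while $\int_0^{|z|}\varphi(s)^{(1-q)/q}\,ds$ is bounded since $(1-q)/q\le 0$ and $\varphi\ge\varphi(0)>0$; hence $|g(z)|\lesssim \varphi(|z|)^{1/q}$. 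Multiplying the resulting bound $|g(z)|^{q-1}\lesssim\varphi(|z|)^{(q-1)/q}$ against the hypothesis on $|g'(z)|$ then yields $|g(z)|^{q-1}|g'(z)|\lesssim 1+\varphi'(|z|)$, i.e.\ $g\in\B^q_\varphi$.

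The main obstacle is the middle inclusion $H^{\infty,q}_\varphi \subseteq \{g:g'\in H^{\infty,q}_{\psi^{1-q}(\psi')^q}\}$, where \eqref{varphi} is used essentially. Given $|g|^q\le C\varphi$, I plan to apply Cauchy's formula on a disc $D(z,r)\subset\D$ with the balancing radius
\[
r \,=\, c\,\frac{\varphi(|z|)}{1+\varphi'(|z|)},
\]
for $c>0$ small, obtaining $|g'(z)|\le r^{-1}\sup_{|w-z|\le r}|g(w)| \lesssim \varphi(|z|+r)^{1/q}/r$. If one can secure (i) $r\le 1-|z|$ and (ii) $\varphi(|z|+r)\lesssim \varphi(|z|)$, then this right-hand side is of order $\varphi(|z|)^{(1-q)/q}(1+\varphi'(|z|))$, the desired bound.

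Both (i) and (ii) will be derived from the differential inequality $\varphi''(r)\le M(1+\varphi'(r))^2/\varphi(r)$ supplied by \eqref{varphi}. For (ii), a bootstrap on the maximal interval $[|z|,t]$ where $\varphi(s)\le 2\varphi(|z|)$ and $1+\varphi'(s)\le 2(1+\varphi'(|z|))$ both hold: the estimates $\int_{|z|}^s\varphi'(u)\,du\le 2(1+\varphi'(|z|))(s-|z|)$ and $\int_{|z|}^s\varphi''(u)\,du\le 4M(1+\varphi'(|z|))^2(s-|z|)/\varphi(|z|)$ are self-consistent for $s-|z|\lesssim \varphi(|z|)/(1+\varphi'(|z|))$, so choosing $c$ below the implicit constant gives (ii). For (i), a comparison principle argument applied to $T(r)=\varphi(r)/(1+\varphi'(r))$, exploiting \eqref{varphi} together with the fact that $(1-r)(1+\varphi'(r))\to\infty$ from Proposition~\ref{prop:properties:varphi}\,\ref{item3:properties:varphi}, gives $T(r)\lesssim 1-r$, hence $r\le 1-|z|$. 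The technical heart of the proof is making (i) and the bootstrap for (ii) rigorous from just conditions \ref{rapidly:decreasing:condition:a}--\ref{rapidly:decreasing:condition:c} augmented by \eqref{varphi}, without invoking convexity.
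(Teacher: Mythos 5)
Your proposal is correct. The two easy inclusions coincide with the paper's argument: $\B^q_\varphi\subseteq H^{\infty,q}_\varphi$ via \eqref{eqn:Bqphi:Lipschitz} and \eqref{eqn:growth:betaphiz0}, and the radial integration of $|g'|\lesssim\varphi^{(1-q)/q}(1+\varphi')$ giving $|g|\lesssim\varphi^{1/q}$ and hence $q|g|^{q-1}|g'|\lesssim1+\varphi'$. Where you genuinely diverge is the hard inclusion $H^{\infty,q}_{\varphi}\subseteq\{g:g'\in H^{\infty,q}_{\psi^{1-q}(\psi')^q}\}$: the paper does not argue pointwise but proves a two-sided quantitative equivalence (Theorem~\ref{pavl}, a version of the Pavlovi\'c--Pel\'aez theorem) by discretizing $[0,1)$ along the level sets $\psi(r_n)=e^n$, noting that condition \eqref{psi} is stable under the power $\psi\mapsto\psi^{1/q}$ (Corollary~\ref{cor:eqnorms:alpha}), and applying the Cauchy estimate \eqref{eq:Cauchy} with $\rho=r_{n+1}$. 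Your continuous-scale Cauchy estimate on discs of radius $r\simeq\varphi(|z|)/(1+\varphi'(|z|))$ is the same mechanism in disguise --- the paper's gaps $r_{n+1}-r_n$ are comparable to $\psi(r_n)/\psi'(r_n)$ by the mean value theorem, and your claim (ii) is exactly the paper's doubling estimate \eqref{eq:xy} --- but your version carries one extra obligation, claim (i) $\varphi(r)\lesssim(1-r)\bigl(1+\varphi'(r)\bigr)$, which the discretization sidesteps entirely since the points $r_n$ lie in $[0,1)$ by construction. Claim (i) does follow from your stated ingredients: setting $T=\psi/\psi'$, condition \eqref{varphi} gives $T'=1-\psi\psi''/(\psi')^2\ge1-M$, and $\liminf_{r\to1^-}T(r)=0$ (otherwise $(\log\psi)'=1/T$ would be bounded, forcing $\psi$ to be bounded, contradicting $(1-r)\psi'(r)\to\infty$ from Proposition~\ref{prop:properties:varphi}\,\ref{item3:properties:varphi}), whence $T(r)\le(M-1)(1-r)$ for every $r$. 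In sum, your route is more elementary and self-contained at the price of this extra lemma and a bootstrap for (ii); the paper's route buys the norm-quantitative statement \eqref{eqnorms} (so the first identity in \eqref{eqn:Bqphi:as:a:growth:space} comes in one stroke as an equivalence rather than as two separate inclusions) and isolates the discretization argument as a reusable general theorem.
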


\begin{remark}
It is worth mentioning that, even in the case when $\varphi$ is an increasing function such that $\lim_{r\to 1^-}\varphi(r)=\infty$, it is necessary to assume some extra condition of the type \eqref{varphi} in order to ensure that, for any $g\in\H(\D)$,
 $M_\infty(r,g)=\sup_{|z|=r}|g(z)|= O(\varphi(r))$, as $r\to 1^{-}$, and
$M_\infty(r,g')= O(\varphi'(r))$, as $r\to 1^{-}$,
are equivalent conditions. Indeed, for
$\varphi(r)=\left( \log\frac{e}{1-r}\right)^2$ there exists $g\in \H(\D)$ such that $M_\infty(r,g)= O(\varphi(r))$, as $r\to 1^{-}$, but
$\limsup_{r\to 1^-} \frac{M_\infty(r,g')}{\varphi'(r)}=\infty$.
In fact, take $g(z)=h(z)\log\frac{e}{1-z}$, where $h(z)=\sum_{k=0}^\infty 2^k z^{2^{2^k}}$ and $\log$ denotes the principal branch of the logarithm. By an straightforward calculation $M_\infty(r,h)= O(\log\frac{e}{1-r})$, so 
\[
M_\infty(r,g)= O\left(\left(\log\frac{e}{1-r}\right)^2\right).
\]
Since the sequence of  Taylor coefficients of $h$ is unbounded, $h$ is not a Bloch function, so 
$\limsup_{r\to1^-}(1-r)M_\infty(r,h')=\infty$. 
On the other hand, $M_\infty(r,h')=h'(r)$ because the Taylor coefficients of $h'$ are nonnegative. 
Therefore $\limsup_{r\to1^-}(1-r)h'(r)=\infty$.
 Moreover,
\begin{equation*}
g'(z)=h'(z)\log\frac{e}{1-z}+\frac{h(z)}{1-z},
\end{equation*}
and so 
$$M_{\infty}(r,g')\ge g'(r)=h'(r)\log\frac{e}{1-r}+\frac{h(r)}{1-r}\ge h'(r)\log\frac{e}{1-r}, \quad 0\le r<1.$$
Consequently,
\begin{equation*}
\limsup_{r\to1^-}
\frac{M_{\infty}(r,g')}{\frac1{1-r}\log\frac{e}{1-r}}
\ge
\limsup_{r\to1^-}(1-r)h'(r)=\infty,
\end{equation*}
which implies that 
$\limsup_{r\to1^-}\frac{M_\infty(r,g')}{\varphi'(r)}=\infty$.
\end{remark}

The main tool to prove Proposition~{\ref{prop:Bqphi:as:a:growth:space}} is the following result which is a quantitative version of~{\cite[Theorem~D]{PavP}} (see also \cite[Theorem 2.1 and (ii) of p. 740]{Pa1}). We include a proof for the sake of completeness.

\begin{theorem}\label{pavl}
Let  $\psi\in C^2[0,1)$ such that $\psi$ and $\psi'$ are positive on $[0,1)$
and $\lim_{r\to1^-}\psi(r)=\infty$. Assume that $\psi$ satisfies the condition 
\begin{equation}\label{psi}
\sup_{0\le r<1}\frac{\psi''(r)\psi(r)}{(\psi'(r))^2}<\infty.
\end{equation}
Then
\begin{equation}\label{eqnorms}
\sup_{0\le r<1} \frac{M_\infty(r,g)}{\psi(r)}\simeq |g(0)|+ \sup_{0\le r<1} \frac{M_\infty(r,g')}{\psi'(r)}\qquad(g\in\H(\D)).
\end{equation}
\end{theorem}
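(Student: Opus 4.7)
The ``$\gtrsim$'' direction (i.e.\ that the right-hand side dominates the left) is immediate. Indeed, if $A:=\sup_r M_\infty(r,g')/\psi'(r)<\infty$, then integrating along the radius from $0$ to $z$ gives
$$|g(z)-g(0)|\le\int_0^{|z|}M_\infty(s,g')\,ds\le A\bigl(\psi(|z|)-\psi(0)\bigr),$$
so dividing by $\psi(|z|)$ and using $\psi\ge\psi(0)>0$ yields the bound.

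The reverse implication $B:=\sup_r M_\infty(r,g)/\psi(r)<\infty\Rightarrow\sup_r M_\infty(r,g')/\psi'(r)\lesssim B$ is the content of the result. My plan is to apply Cauchy's estimate on a disc of radius $\rho=\rho(r):=\delta\psi(r)/\psi'(r)$ centered at $z$, for $|z|=r$:
$$|g'(z)|\le\frac{1}{\rho}\sup_{|w-z|=\rho}|g(w)|\le\frac{B\,\psi(r+\rho)}{\rho},$$
valid whenever $r+\rho<1$. This choice of $\rho$ is dictated by the heuristic minimization of $\psi(r+\rho)/\rho$, and the argument then hinges on two quantitative consequences of \eqref{psi}: \textbf{(i)} $r+\rho(r)<1$ for $\delta$ small enough, and \textbf{(ii)} $\psi(r+\rho(r))\lesssim\psi(r)$. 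Granting (i)--(ii), one obtains $M_\infty(r,g')\le B\psi(r+\rho)/\rho\lesssim \psi'(r)/\delta$, and since $|g(0)|\le M_\infty(0,g)\le B\psi(0)$ is also controlled by $B$, the conclusion follows.

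To extract (i) and (ii), set $a:=\sup_r\psi''(r)\psi(r)/(\psi'(r))^2$. A preliminary observation is that $a>1$: otherwise $\Psi:=\log\psi$ satisfies $\Psi''\le(a-1)(\Psi')^2\le 0$, making $\Psi$ concave and increasing on $[0,1)$ and hence bounded, contradicting $\psi\to\infty$. With $a>1$, the function $h:=\psi^{1-a}$ is positive, decreasing and, by \eqref{psi}, convex on $[0,1)$ with $h(r)\to 0$ as $r\to 1^-$. The supporting-line inequality $h(r)\ge h(r_0)+h'(r_0)(r-r_0)$, letting $r\to 1^-$, simplifies to
$$(1-r_0)\psi'(r_0)\ge\psi(r_0)/(a-1)\qquad(r_0\in[0,1)),$$
which yields $\rho(r)\le\delta(a-1)(1-r)$, i.e., (i), provided $\delta(a-1)<1$. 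For (ii), \eqref{psi} is equivalent to $(1/\Psi')'\ge-(a-1)$, so integrating gives $\Psi'(s)\le\Psi'(r)/(1-(a-1)\delta)$ for $s\in[r,r+\rho]$; integrating this bound over an interval of length $\rho=\delta/\Psi'(r)$ produces $\log\bigl(\psi(r+\rho)/\psi(r)\bigr)\le\delta/(1-(a-1)\delta)=O(1)$, which is (ii).

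The main technical obstacle is exactly the passage from the single inequality $\psi''\psi/(\psi')^2\le a$ to the two quantitative controls (i) and (ii). The convexity of $\psi^{1-a}$ provides the sharp lower bound $(1-r)\psi'\gtrsim\psi$ needed for (i), while the reformulation $(1/\Psi')'\ge -(a-1)$ furnishes the doubling-type control in (ii); the auxiliary fact $a>1$ plays a silent but essential role in both steps. Once these are in hand, the Cauchy estimate on the optimally chosen disc closes the argument without further input.
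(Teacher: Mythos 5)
Your proof is correct, and both arguments run on the same engine — Cauchy's estimate for $g'$ on a disc of radius comparable to $\psi(r)/\psi'(r)$ — but the implementations are genuinely different. The paper discretizes via the level sets $\psi(r_n)=e^n$, derives the Harnack-type bound $\psi'(y)/\psi'(x)\le\bigl(\psi(y)/\psi(x)\bigr)^M$ by integrating $\psi''/\psi'\le M\psi'/\psi$, and uses the mean value theorem to identify $r_{n+1}-r_n$ with $\psi(r_{n+1})/\psi'(x_n)$; since all the $r_n$ lie in $[0,1)$ by construction, the containment and doubling issues never arise explicitly. You instead work pointwise with the explicit radius $\rho=\delta\psi/\psi'$, and must therefore supply two facts the paper sidesteps: that the disc stays inside $\D$, which you obtain from the convexity of $\psi^{1-a}$ and the resulting inequality $(1-r)\psi'(r)\ge\psi(r)/(a-1)$, and the doubling bound $\psi(r+\rho)\lesssim\psi(r)$, which you obtain by integrating $(\psi/\psi')'\ge-(a-1)$. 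Your preliminary observation that necessarily $a>1$ (otherwise $\log\psi$ would be concave and increasing on $[0,1)$, hence bounded) is correct and is indeed what makes both auxiliary steps work; I checked the convexity computation for $\psi^{1-a}$, the supporting-line limit as $r\to1^{-}$, and the two integrations, and they are all sound. The trade-off is that your route produces the extra quantitative by-product $(1-r)\psi'(r)\gtrsim\psi(r)$, whereas the paper's discretization is somewhat shorter because the sequence $\{r_n\}$ absorbs the technicalities. So the proposal is a complete and valid alternative proof.
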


\begin{proof}
The estimate
\[
\sup_{0\le r<1} \frac{M_\infty(r,g)}{\psi(r)}\lesssim |g(0)|+ \sup_{0\le r<1} \frac{M_\infty(r,g')}{\psi'(r)}
\qquad(g\in\H(\D))
\]
can be easily proved as follows:
\begin{align*}
|g(z)|
&\le|g(0)|+\int_0^z|g'(\zeta)||d\zeta|
\le|g(0)|\frac{\varphi(|z|)}{\varphi(0)}
+\int_0^{|z|}M_{\infty}(r,g')dr\\
&\le|g(0)|\frac{\psi(|z|)}{\psi(0)}
+\biggl(\sup_{0\le r<1} \frac{M_\infty(r,g')}{\psi'(r)}\biggr)\int_0^{|z|}\psi'(r)dr\\
&\le|g(0)|\frac{\psi(|z|)}{\psi(0)}
+\psi(|z|)\sup_{0\le r<1} \frac{M_\infty(r,g')}{\psi'(r)}
\\
&\le\max\biggl(1,\frac1{\psi(0)}\biggr)\biggl\{
|g(0)|+\sup_{0\le r<1} \frac{M_\infty(r,g')}{\psi'(r)}
\biggr\}\psi(|z|).
\end{align*}
In order to show the opposite estimate, let us assume without loss of generality that $\psi(0)=1$. Let  $\{r_n\}_{n=0}^\infty$ be the increasing sequence in $[0,1)$ defined by $\psi(r_n)=e^n$ and let 
$M:=\sup_{0\le r<1}\frac{\psi''(r)\psi(r)}{\psi'(r)^2}.$ It follows that
\[
\frac{\psi''(r)}{\psi'(r)}\le M\, \frac{\psi'(r)}{\psi(r)}\qquad(0\le r<1).
\]
By integrating this inequality and taking exponentials we get that, for any $n\in\N_0$, 
\begin{equation}\label{eq:xy}
\frac{\psi'(y)}{\psi'(x)}\le \left( \frac{\psi(y)}{\psi(x)}\right)^M\le  \left( \frac{\psi(r_{n+1})}{\psi(r_n)}\right)^M=e^M 
\quad(r_n\le x\le y\le r_{n+1}).
\end{equation}
On the other hand, by the mean value theorem, there exists 
$x_n\in(r_n, r_{n+1})$ such that
$\psi(r_{n+1})\,\frac{e-1}e=\psi(r_{n+1})-\psi(r_n)=\psi'(x_n) (r_{n+1}-r_n)$.
Applying this identity together with  the well-known inequality
\begin{equation}\label{eq:Cauchy}
M_\infty(r,g')\le C\,\frac{M_\infty(\rho,g)}{\rho-r} 
\qquad(0\le r<\rho<1,\,g\in\H(\D)),
\end{equation}
where $C>0$ is an absolute constant, we get that
\begin{equation*}
M_\infty(r_n,g') 
\le C\,\frac{M_\infty(r_{n+1},g)}{r_{n+1}-r_n}
=\frac{Ce}{e-1}\,\psi'(x_n)\,\frac{M_\infty(r_{n+1},g)}{\psi(r_{n+1})}.
\end{equation*}
Now, since $r_n<x_n<r_{n+1}$, \eqref{eq:xy} gives that
$\psi'(x_n)\le e^M\psi'(r_n)$, so 
\begin{equation}\label{eqn:discrete:inequality:1}
\frac{M_\infty(r_n,g')}{\psi'(r_n)}
\le \frac{Ce^{M+1}}{e-1}\,\frac{M_\infty(r_{n+1},g)}{\psi(r_{n+1})}
\qquad(n\in\N_0).
\end{equation}
Finally, \eqref{eq:xy},  \eqref{eqn:discrete:inequality:1}, and the maximum modulus principle show that
\[
\sup_{r_{n-1}\le r<r_n}\frac{M_\infty(r,g')}{\psi'(r)}
\le e^M\,\frac{M_\infty(r_n,g')}{\psi'(r_n)}
\le\frac{Ce^{2M+1}}{e-1}\,\sup_{0\le r <1}\frac{M_\infty(r,g)}{\psi(r)}
\quad(n\in\N),
\]
which clearly implies that
\[
|g(0)|+ \sup_{0\le r<1} \frac{M_\infty(r,g')}{\psi'(r)}\lesssim \sup_{0\le r<1} \frac{M_\infty(r,g)}{\psi(r)},
\]
since $\cup_{n=1}^{\infty}[r_{n-1},r_n)=[0,1)$.
Hence the proof is complete. 
\end{proof}

\begin{corollary}\label{cor:eqnorms:alpha}
Let $\psi$ be a function as in Theorem~{\ref{pavl}} and let $\psi_{\alpha}:=\psi^\alpha$ on $[0,1)$, for $\alpha>0$.
Then
\begin{equation}\label{eqnorms:alpha}
\sup_{0\le r<1} \frac{M_\infty(r,g)}{\psi_{\alpha}(r)}\simeq |g(0)|+ \sup_{0\le r<1} \frac{M_\infty(r,g')}{\psi_{\alpha}'(r)}\qquad(g\in\H(\D)).
\end{equation}
\end{corollary}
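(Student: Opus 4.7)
The plan is to reduce the corollary to a direct application of Theorem~\ref{pavl} by verifying that $\psi_\alpha=\psi^\alpha$ satisfies the same hypotheses as $\psi$. The basic regularity and positivity conditions are immediate: since $\psi\in C^2[0,1)$ is positive with $\psi'>0$ and $\alpha>0$, one has $\psi_\alpha\in C^2[0,1)$, $\psi_\alpha>0$, $\psi_\alpha'=\alpha\,\psi^{\alpha-1}\psi'>0$ on $[0,1)$, and $\lim_{r\to 1^-}\psi_\alpha(r)=\infty$ because $\lim_{r\to 1^-}\psi(r)=\infty$.

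The only substantive step is to check the growth condition \eqref{psi} for $\psi_\alpha$. Computing
\[
\psi_\alpha''=\alpha(\alpha-1)\,\psi^{\alpha-2}(\psi')^2+\alpha\,\psi^{\alpha-1}\psi'',
\]
I would obtain
\[
\frac{\psi_\alpha''(r)\,\psi_\alpha(r)}{(\psi_\alpha'(r))^2}
=\frac{\alpha-1}{\alpha}+\frac{1}{\alpha}\cdot\frac{\psi''(r)\,\psi(r)}{(\psi'(r))^2},
\]
which is uniformly bounded on $[0,1)$ by the hypothesis that $\sup_{0\le r<1}\frac{\psi''(r)\psi(r)}{(\psi'(r))^2}<\infty$. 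Hence Theorem~\ref{pavl} applies with $\psi$ replaced by $\psi_\alpha$, and \eqref{eqnorms:alpha} follows at once.

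There is no genuine obstacle here; the content of the corollary amounts to the observation that the class of weight functions to which Theorem~\ref{pavl} applies is stable under positive powers, and this stability is witnessed by the elementary identity displayed above.
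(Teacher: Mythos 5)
Your proposal is correct and follows exactly the paper's own argument: verify the regularity, positivity, and limit hypotheses for $\psi_\alpha$, and then reduce \eqref{psi} for $\psi_\alpha$ to \eqref{psi} for $\psi$ via the identity $\frac{\psi_\alpha''\psi_\alpha}{(\psi_\alpha')^2}=1-\frac1\alpha+\frac1\alpha\frac{\psi''\psi}{(\psi')^2}$, which is the same identity the paper displays (your $\frac{\alpha-1}{\alpha}$ equals their $1-\frac1\alpha$). Nothing to add.
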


\begin{proof}
It is clear that $\psi_{\alpha}\in C^2[0,1)$, $\lim_{r\to1^-}\psi_{\alpha}(r)=\infty$, and both $\psi_{\alpha}$ and $\psi'_{\alpha}$ are positive on $[0,1)$. Moreover, $\psi_{\alpha}$ satisfies \eqref{psi} since
\[
\frac{\psi_\alpha''(r)\psi_\alpha(r)}{\psi_\alpha'(r)^2}=1-\frac{1}{\alpha}+ \frac{1}{\alpha}\frac{\psi''(r)\psi(r)}{\psi'(r)^2}
\qquad(0\le r<1).
\]
Therefore \eqref{eqnorms:alpha} directly follows from Theorem \ref{pavl}.
\end{proof}

\begin{proof}[{\bf Proof of Proposition~{\ref{prop:Bqphi:as:a:growth:space}}}]
By Proposition~{\ref{prop:properties:varphi} ~\ref{item4:properties:varphi},} 
 $\lim_{r\to 1^-}\varphi(r)=\infty$  so $\psi(r)\simeq\varphi(r)$.
It follows that
$H^{\infty,q}_{\varphi}=H^{\infty,q}_\psi$.  Moreover,    \eqref{varphi} implies \eqref{psi}. Therefore we may apply 
Corollary \ref{cor:eqnorms:alpha} with $\alpha=\frac1q$ to obtain 
 the first identity in \eqref{eqn:Bqphi:as:a:growth:space}.

Finally, let us show that $\B^q_{\varphi}=H^{\infty,q}_{\varphi}$.
If $g\in\B^q_{\varphi}$ then \eqref{eqn:Bqphi:Lipschitz} and  \eqref{eqn:growth:betaphiz0} show that
\[
|g(z)|^q
\le|g(0)|^q+\|g\|^q_{\B^q_{\varphi}}\,\beta_{\varphi}(z,0)
\le\left\{\tfrac{|g(0)|^q}{\varphi(0)}
 +\|g\|^q_{\B^q_{\varphi}}\,\bigl(1+\tfrac1{\varphi(0)}\bigr)\right\}\varphi(|z|),
\]
 so $g\in H^{\infty,q}_{\varphi}$. 
 Conversely, if $g\in H^{\infty,q}_{\varphi}$ then 
$g'\in H^{\infty,q}_{\psi^{1-q}(\psi')^q}$, so 
\begin{equation*}
\bigl|\nabla|g|^q\bigr|(z)
\lesssim|g(z)|^{q-1}|g'(z)|
\lesssim\varphi(|z|)^{1-\frac1q}\psi(|z|)^{\frac1q-1}\psi'(|z|)
\le 1+\varphi'(|z|),
\end{equation*}
which means that  $g\in\B^q_{\varphi}$, and that ends the proof.
\end{proof}

\section{Proof of Theorem~{\ref{thm:sharp:estimate:norm:word}~\ref{thm:sharp:estimate:norm:word:b}}: Reduction to the case 
	$L_g=S^m_gT^n_g$}
\label{section:reduction:to:SmTn}

In this section we will deduce Theorem~{\ref{thm:sharp:estimate:norm:word}~\ref{thm:sharp:estimate:norm:word:b}} from the following two key results, which correspond to the case $L_g=S^m_gT^n_g$ and whose proofs will be postponed to Sections~\ref{section:proof:of:proposition4.1} and \ref{section:proof:of:proposition4.2}.

\begin{theorem}\label{prop:sharp:upper:estimate:norm:word:ST}
Let $m\in\N_0$, $n\in\N$, and $s=\frac{m}n+1$. If $\omega=e^{-2\varphi} \in \SW$,  then
\begin{equation}
\label{eqn:sharp:upper:estimate:norm:word:ST}
\|S^m_gT^n_g\|_{\Ap2}
\lesssim\|g\|^{sn}_{\B^s_{\varphi}}
\qquad(g\in\H(\D)).
\end{equation}
\end{theorem}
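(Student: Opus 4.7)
The plan is to combine the Littlewood--Paley formula \eqref{eq:LP2} with iterated use of the pointwise Bloch-type bound
\begin{equation*}
|g(z)|^{s-1}|g'(z)|\lesssim \bigl(1+\varphi'(|z|)\bigr)\,\|g\|_{\B^s_\varphi}^s, \qquad z\in\D,
\end{equation*}
which is nothing but the defining inequality of $\B^s_\varphi$ since $|\nabla|g|^s|=s\,|g|^{s-1}|g'|$. By Propositions \ref{prop:norm:dilations} and \ref{pr:radialized:symbol:Bloch} we may assume $g\in\H(\overline{\D})$, and proceed by induction on $n$.

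For the base case $n=1$, so $s=m+1\in\N$, the identity $(S_g^m H)'=H'g^m$ (proved by a one-line induction in $m$) applied to $H=T_gf$ gives $(S_g^m T_g f)'=f\,g'g^m$. Since $S_g^mT_gf$ vanishes at the origin, \eqref{eq:LP2} together with $|g'g^m|=|g|^{s-1}|g'|\lesssim(1+\varphi')\|g\|_{\B^s_\varphi}^s$ yields
\begin{equation*}
\|S_g^m T_gf\|_{\Ap2}^p \simeq \|f\,g'g^m\|_{A^p_{\omega_p(1+\varphi')^{-p}}}^p \lesssim \|g\|_{\B^s_\varphi}^{sp}\|f\|_{\Ap2}^p.
\end{equation*}

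For the inductive step $n\ge2$, set $F=S_g^m T_g^n f$. The same computation gives $F'=T_g^{n-1}f\cdot g'g^m$, and factoring $|g'g^m|=(|g|^{s-1}|g'|)\cdot|g|^{m-s+1}$ with $(s-1)(n-1)=m-s+1\ge0$, \eqref{eq:LP2} and the pointwise bound produce
\begin{equation*}
\|F\|_{\Ap2}^p
\lesssim \|g\|_{\B^s_\varphi}^{sp}\int_{\D}|T_g^{n-1}f|^p|g|^{(s-1)(n-1)p}\,\omega_p\,dA
=\|g\|_{\B^s_\varphi}^{sp}\,\|Q_g^{s-1,n-1}f\|_{\Lp2}^p,
\end{equation*}
in the notation of the intermediate operators $Q_g^{\sigma,k}f=|g|^{\sigma k}T_g^kf$. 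It therefore suffices to prove the recursion
\begin{equation*}
\|Q_g^{s-1,k}f\|_{\Lp2} \lesssim \|g\|_{\B^s_\varphi}^s\,\|Q_g^{s-1,k-1}f\|_{\Lp2}, \qquad 1\le k\le n-1,
\end{equation*}
since $Q_g^{s-1,0}f=f$ and iterating $n-1$ times produces the total exponent $sn=m+n$ in $\|g\|_{\B^s_\varphi}$.

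The crux, and the step I expect to be the main obstacle, is this last recursion. Writing $H=T_g^kf$ (analytic, with $H(0)=0$) and $H'=T_g^{k-1}f\cdot g'$, after one further application of the pointwise bound for $|g|^{s-1}|g'|$ it reduces to a Littlewood--Paley-type inequality against the non-radial perturbed weight $|g|^{(s-1)kp}\omega_p$:
\begin{equation*}
\int_{\D}|H|^p|g|^{(s-1)kp}\,\omega_p\,dA
\lesssim\int_{\D}|H'|^p|g|^{(s-1)kp}\,\omega_p(1+\varphi')^{-p}\,dA.
\end{equation*}
The standard Green's-identity proof of \eqref{eq:LP2} exploits the radiality of $\omega^q$, so some additional input is required. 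The plan is either to adapt that argument exploiting that $\log|g|$ is subharmonic on $\D$ and pluriharmonic off the zero set of $g$, so the Laplacian of the log-weight still equals, up to positive masses at the zeros of $g$, a constant multiple of $\Delta\varphi\simeq\tau^{-2}$; or, alternatively, to use the growth bound $|g(z)|^s\lesssim\bigl(1+\varphi(|z|)\bigr)\|g\|_{\B^s_\varphi}^s$ (coming from \eqref{eqn:Bqphi:Lipschitz} and \eqref{eqn:growth:betaphiz0}) to dominate $|g|^{(s-1)kp}$ by a radial perturbation of $\omega_p$, and then invoke the kernel estimates of Section \ref{subsec:Bergman:kernels} for such perturbations to close the argument.
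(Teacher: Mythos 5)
Your reduction of the theorem to norm estimates for the intermediate operators $Q^{\sigma,\ell}_gf=|g|^{\sigma\ell}T^{\ell}_gf$ (with $\sigma=s-1=\frac{m}{n}$ and $\ell=n-1$), via the Littlewood--Paley formula \eqref{eq:LP2} and the pointwise bound $|g|^{s-1}|g'|\lesssim(1+\varphi')\|g\|^s_{\B^s_\varphi}$, is exactly the first step of the paper's argument, and your base case $n=1$ is fine. The gap is the single-step recursion $\|Q^{\sigma,k}_gf\|_{\Lp2}\lesssim\|g\|^s_{\B^s_\varphi}\|Q^{\sigma,k-1}_gf\|_{\Lp2}$, which you correctly identify as the crux but do not prove. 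You reduce it to a Littlewood--Paley inequality for the non-radial weight $|g|^{\sigma kp}\om_p$, and neither of your two strategies for that inequality works. Writing $|g|^{\sigma kp}\om_p=e^{-2\psi}$, the zeros of $g$ contribute \emph{negative} point masses to $\Delta\psi$, so $\psi$ is not subharmonic and the weight leaves the class for which the Green's-identity proof of \eqref{eq:LP2} (and the kernel estimates of Section \ref{subsec:Bergman:kernels}) are available; the subharmonicity of $\log|g|$ pushes the Laplacian of the log-weight in the wrong direction for your purposes. The second alternative fails because the growth bound $|g|^{s}\lesssim(1+\varphi)\|g\|^s_{\B^s_\varphi}$ only lets you enlarge the weight; a Littlewood--Paley inequality needs the \emph{same} weight on both sides, and the ratio $(1+\varphi)^{\sigma kp/s}/|g|^{\sigma kp}$ is unbounded (take $g$ bounded and nonconstant), so you cannot descend from the radial majorant back to $|g|^{\sigma kp}\om_p(1+\varphi')^{-p}$ on the right-hand side.

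The paper's Proposition \ref{prop:upper:estimate:norm:Q} circumvents precisely this obstruction. Writing $\|Q^{\sigma,\ell}_gf\|_{\Lp2}=\|g^{3m\ell}(T^{\ell}_gf)^{3n}\|^{1/(3n)}_{A^{p/(3n)}_{\om_p}}$ makes the function under the norm \emph{analytic} (all exponents are integers), so the genuine radial-weight formula \eqref{eq:LP2} applies. Differentiating the product then produces, besides the term you keep (which leads to $\|Q^{\sigma,\ell-1}_gf\|_{\Lp2}$), an extra term $A_f$ coming from differentiating $g^{3m\ell}$ --- exactly the contribution of the variation of $|g|^{\sigma\ell p}$ that your non-radial Littlewood--Paley inequality would have to suppress. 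That term cannot be discarded; it is absorbed by H\"older's inequality and a self-improvement argument (one divides by $\|Q^{\sigma,\ell}_g\|_{\Lp2}$, which is finite a priori since one may assume $g\in\H(\overline{\D})$ by Propositions \ref{prop:norm:dilations} and \ref{pr:radialized:symbol:Bloch}), followed by induction on $\ell$. So your overall architecture is right, but the recursion as stated is unsubstantiated, and making it rigorous essentially forces you back to the paper's integer-power-plus-absorption argument.
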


\begin{theorem}\label{prop:sharp:lower:estimate:norm:word:ST}
Let $m\in\N_0$, $n\in\N$, and $s=\frac{m}n+1$. If $\omega=e^{-2\varphi} \in \SW$,  then
\begin{equation}
\label{eqn:sharp:lower:estimate:norm:word:ST}
\|g\|^{sn}_{\B^s_{\varphi}}
\lesssim
\opnorm{S^m_gT^n_g}_{\Ap2}
\qquad(g\in\H(\D)).
\end{equation}
\end{theorem}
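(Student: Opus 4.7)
\textbf{Reduction to $g \in \H(\overline{\D})$.} Combining Proposition~\ref{pr:radialized:symbol:Bloch} (so that $\|g_r\|_{\B^s_\varphi} \nearrow \|g\|_{\B^s_\varphi}$ as $r \to 1^-$) with Proposition~\ref{prop:norm:dilations} (so that $\opnorm{S^m_{g_r}T^n_{g_r}}_{\Ap2} \lesssim \opnorm{S^m_g T^n_g}_{\Ap2}$ uniformly in $r$), it suffices to prove \eqref{eqn:sharp:lower:estimate:norm:word:ST} for $g \in \H(\overline{\D})$.

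\textbf{Overall plan.} Set $\sigma := m/n$, so that $\sigma+1 = s$ and $(\sigma+1)n = sn$. The argument follows the chain
\[
\|g\|^{sn}_{\B^s_\varphi}
\;=\; \bigl(\|g\|^{\sigma+1}_{\B^{\sigma+1}_\varphi}\bigr)^{n}
\;\lesssim\; \opnorm{Q_g^{\sigma,1}}^{n}_{\Lp2}
\;\lesssim\; \opnorm{S^m_g T^n_g}_{\Ap2},
\]
where $Q_g^{\sigma,\ell} f := |g|^{\sigma\ell} T^\ell_g f$ is the auxiliary (non-holomorphic) operator from \eqref{eqn:lem3:sharp:lower:estimate:norm:wordintro}. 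The first inequality is the content of Proposition~\ref{lem:sharp:lower:estimate:norm:word:3} and constitutes the main technical obstacle; the second follows from iterated application of the Littlewood--Paley identity \eqref{eq:LP2} together with the radicality estimate of Theorem~\ref{prop:radicality:estimateintro}.

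\textbf{Proof of the key inequality $\|g\|^{s}_{\B^s_\varphi} \lesssim \opnorm{Q_g^{\sigma,1}}_{\Lp2}$.} Fix $a \in \D$ close to maximizing $s\,|g(a)|^{\sigma}|g'(a)|/(1+\varphi'(|a|))$, and test the operator on the zero-mean normalized reproducing kernel $f_a := (K^\omega_a - K^\omega_a(0))/\|K^\omega_a\|_{\Ap2} \in \Ap2(0)$; by \eqref{eqn:estimate:Apomegap:norm:Bergman:kernel} we have $\|f_a\|_{\Ap2} \lesssim 1$. The key is a pointwise lower bound for $|T_g f_a(z)|$ on the disc $D_\delta(a)$ that captures the factor $|g'(a)|/(1+\varphi'(|a|))$; this is obtained by rewriting the path integral $T_g f_a(z) = \int_0^z f_a\,g'$ via a Cauchy--Green/Stokes representation against a suitably chosen cut-off supported in a neighbourhood of $a$, converting it into an area integral whose dominant contribution localizes $g'$ at the point $a$. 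Combining this with a careful analysis of $|g|^\sigma$ over $D_\delta(a)$ (via \eqref{eqn:Bqphi:Lipschitz} and \eqref{eqn:behavior:betaphi:close:points}), the local kernel estimate \eqref{5.3tris}, and the norm bound \eqref{eqn:estimate:Apomegap:norm:Bergman:kernel}, we obtain
\[
\|Q_g^{\sigma,1} f_a\|^{p}_{\Lp2}
\;\gtrsim\; \int_{D_\delta(a)}|g|^{\sigma p}|T_g f_a|^{p}\,\omega_p\,dA
\;\gtrsim\; \Bigl(\tfrac{|g(a)|^{\sigma}|g'(a)|}{1+\varphi'(|a|)}\Bigr)^{p},
\]
whence taking supremum in $a$ yields the desired bound.

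\textbf{Proof of $\opnorm{Q_g^{\sigma,1}}^{n}_{\Lp2} \lesssim \opnorm{S^m_g T^n_g}_{\Ap2}$ and main obstacle.} A direct computation gives $(S^m_g T^n_g f)' = g^m g'\,T^{n-1}_g f$, so $S^m_g T^n_g f$ vanishes at $0$ and \eqref{eq:LP2} (with $q = p/2$) yields
\[
\|S^m_g T^n_g f\|^{p}_{\Ap2}
\;\simeq\; \int_\D|g|^{mp}\,|T^{n-1}_g f|^{p}\,\Bigl(\tfrac{|g'(z)|}{1+\varphi'(|z|)}\Bigr)^{p}\,\omega_p\,dA.
\]
Iterating such identities and factoring through $\opnorm{Q_g^{\sigma,\ell}}$ for $\ell = 1,\dots,n$---with the crossed $|g|$-exponents controlled via Theorem~\ref{prop:radicality:estimateintro}---extracts $n$ copies of $\opnorm{Q_g^{\sigma,1}}_{\Lp2}$ and establishes the claim. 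The decisive hurdle of the whole argument is the preceding step, namely the design of the Cauchy--Green representation that drives the key inequality; this is the essential replacement, in the smooth rapidly decreasing setting, for the tent-space/Carleson-measure technology used in the Hardy-space analogue \cite[Proposition~7.4]{Aleman:Cascante:Fabrega:Pascuas:Pelaez2}.
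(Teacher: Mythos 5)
Your overall architecture (reduce to $g\in\H(\overline{\D})$, introduce the operators $Q_g^{\sigma,\ell}$, prove a lower bound for $\opnorm{Q_g^{\sigma,1}}_{\Lp2}$ via reproducing kernels and Stokes' theorem, then connect to $S_g^mT_g^n$) matches the paper's at a high level, but both links of your chain have genuine gaps. First, the step $\opnorm{Q_g^{\sigma,1}}^n_{\Lp2}\lesssim\opnorm{S^m_gT^n_g}_{\Ap2}$ is not established by ``iterating'' the Littlewood--Paley identity: each application produces an integral of $|Q_g^{\sigma,\ell}f|^p$ against the extra weight $\bigl(|g|^{\sigma}|g'|/(1+\varphi')\bigr)^p$ (indeed $|g|^{mp}|T_g^{n-1}f|^p|g'|^p(1+\varphi')^{-p}=|Q_g^{\sigma,n-1}f|^p\bigl(|g|^{\sigma}|g'|/(1+\varphi')\bigr)^p$), and this weight admits no pointwise lower bound since it vanishes wherever $g'$ does, so you cannot peel off a factor of $\opnorm{Q_g^{\sigma,1}}_{\Lp2}$ at each stage; moreover, invoking Theorem~\ref{prop:radicality:estimateintro} to control the ``crossed exponents'' yields upper bounds in terms of $\|g\|_{\B^s_\varphi}$, the very quantity being estimated, which is circular. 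The paper avoids this entirely: it proves the single comparison $\opnorm{Q_g^{\sigma,n}}_{\Lp2}\lesssim\opnorm{S^m_gT^n_g}_{\Ap2}$ (Proposition~\ref{lem:sharp:lower:estimate:norm:word:2}: one Littlewood--Paley step, H\"older with exponents $m$ and $\tfrac{m}{m-1}$, Theorem~\ref{thm:compo}, and absorption), and separately proves $\|g\|_{\B^s_\varphi}^{s\ell}\lesssim\opnorm{Q_g^{\sigma,\ell}}_{\Lp2}$ for every $\ell$ by induction, whose induction step (Proposition~\ref{lem:sharp:lower:estimate:norm:word:4}) is a substantial computation your proposal omits.

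Second, your sketch of the base case $\|g\|^s_{\B^s_\varphi}\lesssim\opnorm{Q_g^{\sigma,1}}_{\Lp2}$ rests on a pointwise lower bound for $|T_gf_a|$ on $D_\delta(a)$, and no such bound holds in general: $T_gf_a(z)=\int_0^zf_ag'$ can cancel, and even the na\"ive linearization $|T_gf_a(z)|\gtrsim|f_a(a)||g'(a)||z-a|$ on $D_\delta(a)$ would produce the factor $|g'(a)|\tau(a)$ rather than $|g'(a)|/(1+\varphi'(|a|))$, which is the wrong scale (recall $\tau(1+\varphi')\to\infty$ by Proposition~\ref{prop:properties:varphi}\,\ref{item5:properties:varphi}) and would yield a strictly stronger, generally false, conclusion. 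The paper's Proposition~\ref{lem:sharp:lower:estimate:norm:word:3} proceeds quite differently: it applies the reproducing formula to $\widetilde G_a=g^{N\sigma}(g')^NK^\omega_{a,0}$ with a \emph{large} power $N=3n$, uses Stokes' theorem globally to transfer $\partial/\partial z$ from $T_gK^\omega_{a,0}$ onto $g^{N\sigma}(g')^{N-1}\omega$, bounds each of the three resulting global error integrals by $\|g\|_{\B^s_\varphi}^{s(N-1)}$ times $\int_{\D}|Q_g^{\sigma,1}K^\omega_{a,0}||K^\omega_a|(1+\varphi')^N\omega\,dA$ (controlled by Lemma~\ref{lem:estimate:integrals}, with the second-derivative estimate \eqref{eqn:estimate:second:derivative} needed for one of them), and concludes by absorption; no cut-off or localization is involved. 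Since both missing steps are the technical core of the theorem, the proposal as written does not constitute a proof.
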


 Let $\ell,m,n\in\N_0$
 such that $n\ge 1$, and let $k=\ell+m$ and $N=k+n$.
We begin by proving that
\begin{equation}
\label{eqn:sharp:upper:estimate:norm:word}
\|L_g\|_{\Ap2}
\lesssim\|g\|^N_{\B^s_{\varphi}}
\qquad(g\in\H(\D),\,L_g\in W_g(\ell,m,n)),
\end{equation}
where $s=\frac{\ell+m}n+1$. 
In fact, taking into account that  
any $L_g\in W_g(\ell,m,n)$
 satisfies~{\eqref{eqn:global:decomposition}, estimates
 \eqref{eqn:sharp:upper:estimate:norm:word:ST} and Theorem~{\ref{prop:radicality:estimateintro}} give  \eqref{eqn:sharp:upper:estimate:norm:word}:
\begin{equation*}
\|L_g\|_{\Ap2}
\lesssim
\|S^k_gT^n_g\|_{\Ap2}
+\sum_{j=1}^k \|S^{k-j}_gT^{n+j}_g\|_{\Ap2}
\lesssim
\sum_{j=0}^k \|g\|^N_{\B^{\frac{k+n}{n+j}}_{\varphi}}
\lesssim
\|g\|^N_{\B^s_{\varphi}}.
\end{equation*}

Now we want to prove that 
\begin{equation}
\label{eqn:sharp:lower:estimate:norm:word}
\|g\|^N_{\B^s_{\varphi}}
\lesssim
\opnorm{L_g}_{\Ap2}
\qquad(g\in\H(\D),\,L_g\in W_g(\ell,m,n)).
\end{equation}
In order to do that, we may assume that $g\in\H(\overline{\D})$, by Proposition~{\ref{prop:norm:dilations}} and~{\eqref{eqn:radialized:symbol:Bloch}}. Assume that $L_g\in W_g(\ell,m,n)$ is bounded on $\Ap2(0)$.
Then taking into account that any $L_g$ satisfies \eqref{eqn:global:decomposition:H0}, the estimates  \eqref{eqn:sharp:upper:estimate:norm:word:ST} and \eqref{eqn:sharp:lower:estimate:norm:word:ST}
together with Theorem~\ref{thm:compo}
and Theorem~{\ref{prop:radicality:estimateintro}} show that
\begin{align*}
\|g\|^N_{\B^s_{\varphi}}
&\lesssim
\opnorm{S^k_gT^n_g}_{\Ap2}
\lesssim
\opnorm{L_g}_{\Ap2}
+\sum_{j=1}^k \|S^{k-j}_gT^{n+j}_g\|_{\Ap2}\\
&\le
\opnorm{L_g}_{\Ap2}
+\|T_g\|_{\Ap2}\sum_{j=1}^k \|S^{k-j}_gT^{n+j-1}_g\|_{\Ap2}\\
&\lesssim
\opnorm{L_g}_{\Ap2}
+\opnorm{L_g}_{\Ap2}^{\frac1N}\sum_{j=1}^m \|g\|^{N-1}_{\B^{\frac{N-1}{n+j-1}}_{\varphi}}\\
&\lesssim
\opnorm{L_g}_{\Ap2}
+\opnorm{L_g}_{\Ap2}^{\frac1N}\,\|g\|^{N-1}_{\B^s_{\varphi}},
\end{align*}
It turns out that either $\opnorm{L_g}_{\Ap2}=\|g\|_{\B^s_{\varphi}}=0$ 
or $0<\opnorm{L_g}_{\Ap2}<\infty$ and 
\begin{equation*}
\frac{\|g\|^N_{\B^s_{\varphi}}}{\opnorm{L_g}_{\Ap2}}
\lesssim 
1+\Biggl(\frac{\|g\|^N_{\B^s_{\varphi}}}{\opnorm{L_g}_{\Ap2}}\Biggr)^{\frac{N-1}N}.
\end{equation*}
Hence \eqref{eqn:sharp:lower:estimate:norm:word} holds.
Finally, it is clear that \eqref{eqn:sharp:upper:estimate:norm:word} and \eqref{eqn:sharp:lower:estimate:norm:word} give Theorem~{\ref{thm:sharp:estimate:norm:word}~\ref{thm:sharp:estimate:norm:word:b}}.
\vspace*{6pt}

\section{Proof of Theorem \ref{prop:sharp:upper:estimate:norm:word:ST}}
\label{section:proof:of:proposition4.1}

From now on the Littlewood-Paley formula \eqref{eq:LP2} we will be repeatedly used without metioning it explicitly.
Estimate \eqref{eqn:sharp:upper:estimate:norm:word:ST} is a consequence of the following fundamental result.

\begin{proposition}\label{prop:upper:estimate:norm:Q}
Let $\om\in \SW$.
 For $g\in\H(\overline{\D})$, $\sigma\in\Q$, $\sigma>0$, and $\ell\in\N$, we define
\begin{equation}\label{eqn:definition:Q}
Q_g^{\sigma,\ell}f:=|g|^{\sigma\ell}\,T^{\ell}_gf
\qquad(f\in\H(\D)).
\end{equation}
If  $0<p<\infty$, then $Q_{g}^{\sigma,\ell}$ is a bounded operator from 
$\Ap2$ to $\Lp2$ 
and its norm $\|Q_{g}^{\sigma,\ell}\|_{\Lp2}$ satisfies the estimate
\begin{equation}\label{eqn:upper:estimate:norm:Q}
\|Q_{g}^{\sigma,\ell}\|_{\Lp2}\le C\,\|g\|^{s\ell}_{\B^s_{\varphi}}, 
\end{equation}
where $s=\sigma+1$ and $C>0$ is a constant \textup{(}only depending on $\omega,p,\sigma$, and $\ell$\textup{)}.
\end{proposition}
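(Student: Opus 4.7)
I would prove this by induction on $\ell$. First, by Proposition~\ref{prop:radialized:symbol:Bloch} and Proposition~\ref{prop:norm:dilations}, the statement for $g\in\H(\D)$ follows from the statement for $g_r\in\H(\overline{\D})$ by taking $r\to 1^-$ (note $\|g_r\|_{\B^s_\varphi}\le\|g\|_{\B^s_\varphi}$). The base case $\ell=0$ is trivial since $Q_g^{\sigma,0}=\mathrm{Id}$ and $\|g\|^0_{\B^s_\varphi}=1$.

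For the inductive step $\ell-1\to\ell$, my strategy is to combine the Littlewood-Paley formula \eqref{eq:LP2} applied to the analytic factor $T_g^\ell f$ (which satisfies $T_g^\ell f(0)=0$ and $(T_g^\ell f)'=g'\,T_g^{\ell-1}f$) with the Bloch-type pointwise estimate
\[
|g(z)|^\sigma |g'(z)|\le \tfrac{1}{s}\,\|g\|^{s}_{\B^s_\varphi}\bigl(1+\varphi'(|z|)\bigr),\qquad z\in\D,
\]
which is built into the very definition of the norm $\|g\|_{\B^s_\varphi}$. The heuristic is that each inductive step should yield exactly one factor of $\|g\|^{ps}_{\B^s_\varphi}$: one power of $|g|^\sigma$ pairs with the factor $|g'|/(1+\varphi')$ emerging from LP, to produce $\|g\|^s_{\B^s_\varphi}$, while the surplus $|g|^{p\sigma(\ell-1)}$ combines with $|T_g^{\ell-1}f|^p$ to form $|Q_g^{\sigma,\ell-1}f|^p$, to which the inductive hypothesis applies. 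Schematically:
\[
\int_\D |g|^{p\sigma\ell}|T_g^\ell f|^p\omega^{p/2}dA
\;\lesssim\;\int_\D |g|^{p\sigma\ell}|g'|^p|T_g^{\ell-1}f|^p\omega^{p/2}(1+\varphi')^{-p}dA
\;\lesssim\;\|g\|^{ps}_{\B^s_\varphi}\,\|Q_g^{\sigma,\ell-1}f\|^p_{\Lp2}.
\]

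The hard part will be justifying the first inequality above: the Littlewood-Paley formula \eqref{eq:LP2} is stated for the radial weight $\omega^q$, whereas here I need an LP-type estimate for the non-radial weight $W=\omega^{p/2}|g|^{p\sigma\ell}$. I expect to handle this via a Green's/Stokes' theorem argument: find a sufficiently regular auxiliary function $\Phi$ with $\Delta\Phi\gtrsim W$ and $\Phi$ comparable to $W(1+\varphi')^{-2}\tau^{-2}$, apply Green's identity to $|T_g^\ell f|^p$ (using $\Delta|T_g^\ell f|^p=p^2|T_g^\ell f|^{p-2}|(T_g^\ell f)'|^2$ for $p\ge 2$, with a standard regularization for $0<p<2$), and then apply Hölder's inequality to reabsorb a factor of $|T_g^\ell f|^{p}$ back into the left-hand side. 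Crucially, to build $\Phi$ one can take advantage of the fact that $|g|^s$ is $\beta_\varphi$-Lipschitz by \eqref{eqn:Bqphi:Lipschitz}, so that $|g|^{p\sigma\ell}$ varies in a controlled way over the admissible discs $D_\delta(z)$; combined with the standard LP estimate for $\omega^{p/2}$ this yields the non-radial weighted analog. The additional boundary or lower-order terms arising from the integration by parts are absorbed using the growth properties of $\varphi$ collected in Proposition~\ref{prop:properties:varphi} and the kernel estimates in Subsection~\ref{subsec:Bergman:kernels}.

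Finally, to close the induction, after the previous step is in place I would apply the inductive hypothesis to $\|Q_g^{\sigma,\ell-1}f\|^p_{\Lp2}\lesssim \|g\|^{ps(\ell-1)}_{\B^s_\varphi}\|f\|^p_{\Ap2}$, which combined with the factor $\|g\|^{ps}_{\B^s_\varphi}$ produced in the current step gives exactly $\|g\|^{ps\ell}_{\B^s_\varphi}\|f\|^p_{\Ap2}$, as required. The hypothesis $\sigma\in\Q$ seems to enter only to ensure the exponent $p\sigma\ell$ in the Green's-theorem step is manipulable by a finite number of applications of the basic identity, perhaps through an auxiliary reduction to the case where $\sigma\ell\in\N$; in any event, the Bloch bound and radicality estimate \eqref{eq:i1} guarantee that no loss occurs.
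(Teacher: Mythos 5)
Your overall architecture (Littlewood--Paley formula, the pointwise bound $|g|^{\sigma}|g'|\le\tfrac1s\|g\|^s_{\B^s_\varphi}(1+\varphi')$, induction on $\ell$, dilation reduction to $g\in\H(\overline{\D})$) matches the spirit of the paper, and your second schematic inequality is correct. But the step you yourself flag as ``the hard part'' --- the Littlewood--Paley estimate
\[
\int_\D |g|^{p\sigma\ell}|T_g^\ell f|^p\,\omega^{p/2}\,dA
\;\lesssim\;\int_\D |g|^{p\sigma\ell}\,|(T_g^\ell f)'|^p\,\omega^{p/2}(1+\varphi')^{-p}\,dA
\]
for the \emph{non-radial} weight $\omega^{p/2}|g|^{p\sigma\ell}$ --- is left as a sketch, and it is precisely the obstruction that the paper's proof is engineered to avoid. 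Any Green's/Stokes' argument for such a weight produces a term in which the derivative falls on $|g|^{p\sigma\ell}$ rather than on $T_g^\ell f$; that term is not ``lower order'' and cannot be absorbed merely by the growth properties of $\varphi$ or the $\beta_\varphi$-Lipschitz control of $|g|^s$. It carries a factor $|g|^{p\sigma\ell-1}|g'|\,|T_g^\ell f|^p$, i.e.\ it still contains the full power $|T_g^\ell f|^p$, and controlling it requires a self-improvement/absorption argument in which $\|Q_g^{\sigma,\ell}f\|$ reappears on the right-hand side with exponent strictly less than $1$. Without that mechanism your induction does not close.

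This is exactly where the rationality of $\sigma$ enters in the paper, and your reading of its role is off: writing $\sigma=m/n$ in lowest terms, one has $\|Q^{\sigma,\ell}_gf\|_{\Lp2}=\|g^{3m\ell}(T^{\ell}_g f)^{3n}\|^{1/(3n)}_{A^{p/(3n)}_{\om_p}}$, so the whole quantity is the norm of a single \emph{analytic} function and the standard radial Littlewood--Paley formula \eqref{eq:LP2} applies with exponent $p/(3n)$ --- no non-radial LP formula is needed. Differentiating $g^{3m\ell}(T_g^\ell f)^{3n}$ then yields two terms: one (your ``good'' term) containing $T_g^{\ell-1}f$, to which the induction hypothesis applies, and one (the weight-derivative term) which is bounded via H\"older by $\|g\|^{\epsilon}_{\B^s_\varphi}\,\|Q^{\sigma,\ell}_gf\|^{1-\epsilon'}_{\Lp2}\,\|T_g^\ell f\|^{\epsilon''}_{\Ap2}$ and then absorbed by dividing through by $\|Q^{\sigma,\ell}_g\|_{\Lp2}$ (finite and nonzero for nonconstant $g\in\H(\overline{\D})$). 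To repair your proof you would either have to carry out this absorption explicitly, or actually prove the non-radial LP inequality you conjecture --- which, as stated, is not established anywhere in the paper and is not a routine consequence of the tools you cite.
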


\begin{proof}
Let us consider the irreducible fraction expression of $\sigma$, {\em i.e.} $\sigma=\frac{m}n$, where $m,n\in\N$ are coprime. 
Let $f\in \Ap2$ so that $\|f\|_{\Ap2}=1$.
Since 
$\|Q^{\sigma,\ell}_gf\|_{\Lp2}=\|g^{3m\ell}(T^{\ell}_g f)^{3n}\|^ {\frac1{3n}}_{A^{\frac{p}{3n}}_{\om_p}}$ and
\[
\bigl(g^{3m\ell}(T^{\ell}_g f)^{3n}\bigr)'
=3m\ell g^{3m\ell-1}g'(T_g^{\ell}f)^{3n}+3ng^{3m\ell}g'(T^{\ell-1}_gf)(T^{\ell}_gf)^{3n-1},
\] 
there is a constant $C_1>0$, which only depends on $\alpha,p,m$, and $n$, such that
\begin{equation}\label{eqn:estimate:norm:Qf}
\|Q^{\sigma,\ell}_gf\|_{\Lp2}\le C_1\,(A_f+B_f),
\end{equation}
where
\begin{align*} A_f&=\|g^{3m\ell-1}g'(T_g^{\ell}f)^{3n}\|^{\frac1{3n}}_{A^{\frac{p}{3n}}_{\om^{p/2}\left(1+\varphi'\right)^{-\frac{p}{3n}}}}
\qquad\mbox{and}
\\
B_f&=\|g^{3m\ell}g'(T^{\ell-1}_gf)(T^{\ell}_gf)^{3n-1}\|^{\frac1{3n}}_{A^{\frac{p}{3n}}_{\om^{p/2}\left(1+\varphi'\right)^{-\frac{p}{3n}}}}.
\end{align*}
Note that $\sigma+1=\frac{3m}{3n}+1<3m\le 3m\ell$, and so
\begin{align*}
A_f&=\||g|^{\sigma}g'|g|^{3m\ell-(\sigma+1)}(T_g^{\ell}f)^{3n}\|^{\frac1{3n}}_{L^{\frac{p}{3n}}_{\om^{p/2}\left(1+\varphi'\right)^{-\frac{p}{3n}}}}
\\
&\lesssim \|g\|^{\frac{\sigma+1}{3n}}_{\B^s_{\varphi}}\,\||g|^{3m\ell-(\sigma+1)}(T_g^{\ell}f)^{3n}\|^{\frac1{3n}}_{L^{\frac{p}{3n}}_{\om_p}}.
\end{align*}
Then we apply H\"{o}lder's inequality with exponents $\frac{3m\ell}{3m\ell-(\tau+1)}$ and $\frac{3m\ell}{\tau+1}$ to get that
\begin{align*}
A_f &\le\|g\|^{\frac{\sigma+1}{3n}}_{\B^s_{\varphi}}
    \||g|^{3m\ell}(T^{\ell}_gf)^{3n}\|^{\frac{3m\ell-(\sigma+1)}{9mn\ell}}_{L^{\frac{p}{3n}}_{\om_p}}
    \|(T^{\ell}_gf)^{3n}\|^{\frac{\sigma+1}{9mn\ell}}_{L^{\frac{p}{3n}}_{\om_p}}
\\ 
&=\|g\|^{\frac{\sigma+1}{3n}}_{\B^s_{\varphi}}
 \|Q^{\sigma,\ell}_gf\|^{\frac{3m\ell-(\sigma+1)}{3m\ell}}_{\Lp2}
 \|T^{\ell}_gf\|^{\frac{\sigma+1}{3m\ell}}_{\Ap2}.
\end{align*}
Then, 
taking into account that $\|T^{\ell}_g\|_{\Ap2}\lesssim \|g\|^{\ell}_{\B_\varphi}=\|g\|^{\ell}_{\B^1_{\varphi}}$, and Theorem~{\ref{prop:radicality:estimateintro},} we obtain that there is a constant $C_2>0$, only depending on $\alpha,p,m,n$, and $\ell$, such that 
\begin{equation}\label{eqn:estimate:A}
A_f\lesssim 
\|g\|^{\frac{\sigma+1}{3n}}_{\B^s_{\varphi}}
\|g\|^{\frac{\sigma+1}{3m}}_{\B^1_{\varphi}}
\,\|Q^{\sigma,\ell}_g\|^{1-\frac{\sigma+1}{3m\ell}}_{\Lp2}
\le C_2\,\|g\|^{\frac{(\sigma+1)^2}{3m}}_{\B^s_{\varphi}}\,\|Q^{\sigma,\ell}_g\|^{1-\frac{\sigma+1}{3m\ell}}_{\Lp2},
\end{equation}
since $\frac{\sigma+1}{3n}+\frac{\sigma+1}{3m}=\frac{(\sigma+1)^2}{3m}$.
\vspace*{6pt}

 Now let us estimate $B_f$. Since $3m\ell-\sigma>0$, we have that 
\begin{align*}
B_f
&=\||g|^{\sigma}g'|g|^{3m\ell-\sigma}(T^{\ell-1}_gf)(T^{\ell}_gf)^{3n-1}\|^{\frac1{3n}}_{L^{\frac{p}{3n}}_{\om^{p/2}\left(1+\varphi'\right)^{-\frac{p}{3n}}}}\\
&\le\|g\|^{\frac{\sigma+1}{3n}}_{\B^s_{\varphi}}\,
    \||g|^{3m\ell-\sigma}(T^{\ell-1}_gf)(T^{\ell}_gf)^{3n-1}\|^{\frac1{3n}}_{L^{\frac{p}{3n}}_{\om_p}}.
\end{align*}
But 
\[
3m\ell-\sigma=3m\ell-\sigma\ell+\sigma\ell-\sigma=\sigma\ell(3n-1)+\sigma(\ell-1)
\]
and so we may apply H\"{o}lder's inequality with exponents $3n$ and $\frac{3n}{3n-1}$ to obtain
\begin{align*}
B_f
&\le\|g\|^{\frac{\sigma+1}{3n}}_{\B^s_{\varphi}}\,
    \||g|^{\sigma(\ell-1)}(T^{\ell-1}_gf)\,
 |g|^{\sigma\ell(3n-1)}(T^{\ell}_gf)^{3n-1}\|^{\frac1{3n}}_{L^{\frac{p}{3n}}_{\om_p}}\\
&\le\|g\|^{\frac{\sigma+1}{3n}}_{\B^s_{\varphi}}\,
    \||g|^{3n\sigma(\ell-1)}(T^{\ell-1}_gf)^{3n}\|^{\frac{1}{9n^2}}_{L^{\frac{p}{3n}}_{\om-p}}\, 
    \||g|^{3n\sigma\ell}(T^{\ell}_gf)^{3n}\|^{\frac{3n-1}{9n^2}}_{L^{\frac{p}{3n}}_{\om_p}} \\
&=\|g\|^{\frac{\sigma+1}{3n}}_{\B^s_{\varphi}}\,
    \|Q_g^{\sigma,\ell-1}f\|^{\frac1{3n}}_{\Lp2}\,
 \|Q_g^{\sigma,\ell}f\|^{1-\frac1{3n}}_{\Lp2}.
\end{align*}
It follows that
\begin{equation}\label{eqn:estimate:B}
B_f\le\left\{
\begin{array}{ll}
\|g\|^{\frac{\sigma+1}{3n}}_{\B^s_{\varphi}}\,\,
 \|Q^{\sigma,1}_g\|^{1-\frac1{3n}}_{\Lp2},
 &\mbox{if $\ell=1$,}\vspace*{4pt}
\\
\|g\|^{\frac{\sigma+1}{3n}}_{\B^s_{\varphi}}\,\,
    \|Q_g^{\sigma,\ell-1}\|^{\frac1{3n}}_{\Lp2}\,\,
 \|Q^{\sigma,\ell}_g\|^{1-\frac1{3n}}_{\Lp2}, &\mbox{if $\ell>1$.}
\end{array}
\right.
\end{equation}
Therefore~{\eqref{eqn:estimate:norm:Qf}}, \eqref{eqn:estimate:A}, and~{\eqref{eqn:estimate:B}}  imply that there is a constant $C_{3,\ell}>0$, which only depends on $\alpha,p,q$ and $\ell$, such that
\begin{equation*}\begin{split}
&\|Q^{\sigma,\ell}_g\|_{\Lp2}
\\ & \le
 C_{3,\ell}\,\Bigl(\|g\|^{\frac{(\sigma+1)^2}{3m}}_{\B^s_{\varphi}}\,\|Q^{\sigma,\ell}_g\|^{1-\frac{\sigma+1}{3m\ell}}_{\Lp2}+
 \|g\|^{\frac{\sigma+1}{3n}}_{\B^s_{\varphi}}\,\,
 \|Q^{\sigma,\ell}_g\|^{1-\frac1{3n}}_{\Lp2}
\Bigr),\quad\mbox{if $\ell=1$,}
\end{split}\end{equation*}
and
\begin{equation*}\begin{split}
& \|Q^{\sigma,\ell}_g\|_{\Lp2}
\\ & \le C_{3,\ell}\,\Bigl(\|g\|^{\frac{(\sigma+1)^2}{3m}}_{\B^s_{\varphi}}\,\|Q^{\sigma,\ell}_g\|^{1-\frac{\sigma+1}{3m\ell}}_{\Lp2}+\|g\|^{\frac{\sigma+1}{3n}}_{\B^s_{\varphi}}\,\,
    \|Q_g^{\sigma,\ell-1}\|^{\frac1{3n}}_{\Lp2}\,\,
 \|Q^{\sigma,\ell}_g\|^{1-\frac1{3n}}_{\Lp2}\Bigr),
\end{split}\end{equation*}
if $\ell>1$. 

Recall that $0<\|Q^{\sigma,\ell}_g\|_{\Lp2}<\infty$, for any $\sigma$ and $\ell$, if $g$ is not constant, while $\|Q^{\sigma,\ell}_g\|_{\Lp2}=0$, for all $\sigma$ and $\ell$, otherwise. In particular, if $g$ is constant then~{\eqref{eqn:upper:estimate:norm:Q}} holds. On the other hand, when $g$ is not constant, we may divide by $C_{3,\ell}\|Q^{\sigma,\ell}_g\|_{\Lp2}$ in the preceding inequalities to get that
\begin{equation*}
C_{3,1}^{-1}\le
 \biggl(\frac{\|g\|^{\sigma+1}_{\B^s_{\varphi}}}{\|Q^{\sigma,\ell}_g\|_{\Lp2}}\biggr)^{\frac{\sigma+1}{3m}}
+\biggl(\frac{\|g\|^{\sigma+1}_{\B^s_{\varphi}}}{\|Q^{\sigma,\ell}_g\|_{\Lp2}}\biggr)^{\frac1{3n}}
\end{equation*}
and
\begin{equation*}
C_{3,\ell}^{-1}\le \biggl(\frac{\|g\|^{\sigma+1}_{\B^s_{\varphi}}}{\|Q^{\sigma,\ell}_g\|^{\frac{1}{\ell}}_{\Lp2}}\biggr)^{\frac{\sigma+1}{3m}}
+\biggl(\|g\|^{\sigma+1}_{\B^s_{\varphi}}
\frac{\|Q^{\sigma,\ell-1}_g\|_{\Lp2}}
 {\|Q^{\sigma,\ell}_g\|_{\Lp2}}\biggr)^{\frac1{3n}}
\quad(\ell>1).
\end{equation*}
 Since $\frac{\sigma+1}{3m}=\frac{\sigma+1}\sigma\frac1{3n}$, we may apply the convexity inequality 
\[
(x+y)^{\kappa}\le 2^{\kappa-1}(x^{\kappa}+y^{\kappa})
\qquad(x,y>0,\,\kappa\ge1),
\] 
to deduce that
\begin{equation}\label{eqn:inequality:ell=1}
2^{1-3n}C_{3,1}^{-3n}\le
 \biggl(\frac{\|g\|^{\sigma+1}_{\B^s_{\varphi}}}{\|Q^{\sigma,\ell}_g\|_{\Lp2}}\biggr)^{\frac{\sigma+1}{\sigma}}
+\frac{\|g\|^{\sigma+1}_{\B^s_{\varphi}}}{\|Q^{\sigma,\ell}_g\|_{\Lp2}}
\end{equation}
and
\begin{equation}\label{eqn:induction:inequality}
2^{1-3n}C_{3,\ell}^{-3n}\le \biggl(\frac{\|g\|^{\sigma+1}_{\B^s_{\varphi}}}{\|Q^{\sigma,\ell}_g\|^{\frac{1}{\ell}}_{\Lp2}}\biggr)^{\frac{\sigma+1}\sigma}
+\|g\|^{\sigma+1}_{\B^s_{\varphi}}
\frac{\|Q^{\sigma,\ell-1}_g\|_{\Lp2}}
 {\|Q^{\sigma,\ell}_g\|_{\Lp2}}
\quad(\ell>1).
\end{equation}
Now we can prove~{\eqref{eqn:upper:estimate:norm:Q}} by induction on $\ell$. First note that the case $\ell=1$ follows from~{\eqref{eqn:inequality:ell=1}}. 
Let $\ell>1$.
By the induction hypothesis, there is a constant $M>0$, which only depends on $\alpha,p,\sigma$, and $\ell-1$  such that 
\begin{equation*}
\|Q_g^{\sigma,\ell-1}\|_{\Lp2}
\le M\,\|g\|^{(\sigma+1)(\ell-1)}_{\B^s_{\varphi}}, 
\end{equation*}
for any $\ell\in\N$, $\ell\ge 2$. 
Then, by~{\eqref{eqn:induction:inequality}},
 we have that
\begin{equation*}
2^{1-3n}C_{3,\ell}^{-3n}\le \biggl(\frac{\|g\|^{(\sigma+1)\ell}_{\B^s_{\varphi}}}{\|Q^{\sigma,\ell}_g\|_{\Lp2}}\biggr)^{\frac{(\sigma+1)}{\sigma\ell}}
+M
\frac{\|g\|^{(\sigma+1)\ell}_{\B^s_{\varphi}}}
 {\|Q^{\sigma,\ell}_g\|_{\Lp2}},
\end{equation*}
Then, it follows that there exists a constant $C>0$, only depending on $\alpha,p,\sigma$, and $\ell$, such that
 \begin{equation*}
\|Q_g^{\sigma,\ell}\|_{\Lp2}
\le C\,\|g\|^{(\sigma+1)\ell}_{\B^s_{\varphi}}. 
\end{equation*}
Hence the proof is complete.
\end{proof}

\begin{proof}[{\bf Last step in the proof of Theorem~{\ref{prop:sharp:upper:estimate:norm:word:ST}}}]
Since $S^m_g T_g=\frac1{m+1}\,T_{g^{m+1}}$, 
 \eqref{eqn:sharp:upper:estimate:norm:word:ST} clearly holds for $n=1$. So we assume that $n>1$. 
By Proposition~{\ref{prop:norm:dilations}}  and \eqref{eqn:radialized:symbol:Bloch}, we may also assume that $g\in\H(\overline{\D})$. Then there exists a constant $C_1>0$, only depending on $\alpha,p,m$ and $n$, which, for any $f\in \Ap2$, satisfies that
\begin{align*}
\|S^m_gT^n_gf\|_{\Ap2}
&\le C_1\, \|g^mg'T^{n-1}_gf\|_{A^p_{\om^{p/2}\left(1+\varphi'\right)^{-p}}}
\\
&=C_1\,\||g|^{\frac{m}n}g'\,|g|^{m-\frac{m}n}T^{n-1}_gf\|_{A^p_{\om^{p/2}\left(1+\varphi'\right)^{-p}}}
\\
&\le C_1\,\|g\|^s_{\B^s_{\varphi}} 
   \,\||g|^{\frac{m}n(n-1)}\,T^{n-1}_gf\|_{\Lp2}
\\
&=C_1\,\|g\|^s_{\B^s_{\varphi}}\,\|Q^{\sigma,\ell}_gf\|_{\Lp2},
\end{align*}
where $\sigma=\frac{m}n$, $\ell=n-1$, and $Q^{\sigma,\ell}_gf$ is defined by~{\eqref{eqn:definition:Q}}. It follows that
\begin{equation*}
\label{eqn:sharp:upper:estimate:norm:SmTn:1}
\|S^m_gT^n_g\|_{\Ap2}\le C_1\,\|g\|^s_{\B^s_{\varphi}} 
   \,\|Q^{\sigma,\ell}_g\|_{\Lp2}
\qquad(g\in\H(\overline{\D})).
\end{equation*}
Now Proposition~{\ref{prop:upper:estimate:norm:Q}} shows that there is a constant $C_2>0$, which only depends on $\alpha$, $p$, $\sigma$ and $n$,  so that
\begin{equation*}
\label{eqn:sharp:upper:estimate:norm:SmTn:2}
\|Q^{\sigma,n-1}_g\|_{\Lp2}\le 
C_2\,\|g\|^{s(n-1)}_{\B^s_{\varphi}}
\qquad(g\in\H(\overline{\D})).
\end{equation*}
Therefore 
\[
\|S^m_gT^n_g\|_{\Ap2}
\le C_1C_2\,\|g\|^{sn}_{\B^s_{\varphi}}
\qquad(g\in\H(\overline{\D})),
\]
and hence Theorem~{\ref{prop:sharp:upper:estimate:norm:word:ST}} is proved. 
\end{proof}

\section{Proof of Theorem~{\ref{prop:sharp:lower:estimate:norm:word:ST}}
and Corollary~{\ref{cor:main:thm}}}
\label{section:proof:of:proposition4.2}

We prove estimate~{\eqref{eqn:sharp:lower:estimate:norm:word:ST}} in two steps as follows:

\begin{proposition}\label{prop:sharp:lower:estimate:norm:word:1}
Let $\ell\in\N$ and $\sigma\in\Q$, $\sigma>0$. If $\om\in\SW$, then

\begin{equation}
\label{eqn:lem1:sharp:lower:estimate:norm:word}
\|g\|^{s\ell}_{\B^s_{\varphi}}\lesssim
\opnorm{Q_g^{\sigma,\ell}}_{\Lp2} 
\qquad(g\in\H(\overline{\D})),
\end{equation}
where $s=\sigma+1$.
\end{proposition}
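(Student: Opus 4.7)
The plan is to establish the inequality at each point $a\in\D$ by testing $Q_g^{\sigma,\ell}$ against a suitably normalized Bergman reproducing kernel peaked at $a$, and to extract the $\B^s_\varphi$-norm via a Stokes/sub-mean-value argument localized on the $\tau$-disc $D_\delta(a)$. By Proposition~\ref{prop:norm:dilations} together with \eqref{eqn:radialized:symbol:Bloch} we may assume $g\in\H(\overline{\D})$. Writing $\sigma=m/n$ in lowest terms with $m,n\in\N$ coprime (the case $m=0$ reducing trivially via iterated Littlewood--Paley to the $T_g^\ell$ estimate), every $L^p_{\omega_p}$ computation will be carried out after raising to the integer exponent $3n$, so that $|g|^{3n\sigma\ell}=|g^{3m\ell}|$ becomes the modulus of the analytic function $g^{3m\ell}$ and \eqref{eq:LP2} is applicable.

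Fix $a\in\D$ and set $f_a(z):=(K_a^\omega(z)-K_a^\omega(0))/\|K_a^\omega\|_{\Ap2}\in\Ap2(0)$. Estimate \eqref{eqn:estimate:Apomegap:norm:Bergman:kernel} together with the off-diagonal decay provided by \eqref{5.3} and \eqref{5.4} give $\|f_a\|_{\Ap2}\lesssim1$, and therefore
\[
\bigl\|g^{3m\ell}(T_g^\ell f_a)^{3n}\bigr\|_{A^{p/(3n)}_{\omega_p}}^{p/(3n)}
=\|Q_g^{\sigma,\ell}f_a\|_{\Lp2}^{p}
\lesssim\opnorm{Q_g^{\sigma,\ell}}_{\Lp2}^{p}.
\]
I would then apply \eqref{eq:LP2} iteratively $\ell$ times to the analytic integrand $g^{3m\ell}(T_g^\ell f_a)^{3n}$, using $(T_g^kh)'=g'(T_g^{k-1}h)$ at each step. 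After $\ell$ rounds, the ``dominant'' Leibniz term is a constant multiple of
\[
(g')^\ell\,g^{3m\ell-\ell}\,f_a\,(T_gf_a)^{\alpha_1}\cdots(T_g^{\ell}f_a)^{\alpha_\ell},\qquad \alpha_1+\dots+\alpha_\ell=3n-1,
\]
weighted by $\omega^{p/2}(1+\varphi')^{-\ell p/(3n)}$. The remaining Leibniz summands contain strictly fewer $g'$-factors and are absorbed by a bootstrap: they are estimated from above using the sharp upper bound $\|Q_g^{\sigma,j}\|_{\Lp2}\lesssim\|g\|_{\B^s_\varphi}^{sj}$ of Proposition~\ref{prop:upper:estimate:norm:Q} for $j<\ell$, together with the embeddings among $\B^q_\varphi$-classes furnished by Theorem~\ref{prop:radicality:estimateintro}.

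For the dominant term, localize the integral to $D_\delta(a)$, apply the sub-mean-value property to the analytic factor, and invoke the comparabilities $\tau(\zeta)\simeq\tau(a)$, $\omega(\zeta)\simeq\omega(a)$ and $1+\varphi'(|\zeta|)\simeq1+\varphi'(|a|)$ on $D_\delta(a)$ from Proposition~\ref{prop:properties:varphi}\,\ref{item6:properties:varphi}, together with the peak lower bound \eqref{5.3tris} for $f_a$ and $\|K_a^\omega\|_{\Ap2}\simeq\omega(a)^{-1/2}\tau(a)^{2/p-2}$ from \eqref{eqn:estimate:Apomegap:norm:Bergman:kernel}. After dividing through by the kernel asymptotics and taking the appropriate root, this yields the pointwise estimate
\[
\frac{|g(a)|^{(s-1)\ell}|g'(a)|^\ell}{(1+\varphi'(|a|))^\ell}
\;\lesssim\;\opnorm{Q_g^{\sigma,\ell}}_{\Lp2},
\]
and the supremum over $a\in\D$ is exactly \eqref{eqn:lem1:sharp:lower:estimate:norm:word}.

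The main obstacle is the bookkeeping in the iterated Leibniz expansion together with the absorption of its non-dominant summands without losing the sharp exponent $s\ell$. This forces one to match the powers of $g$, $g'$, and of the intermediate iterates $T_g^jf_a$ against the critical scaling $s=\sigma+1$, and relies decisively on the embeddings of Theorem~\ref{prop:radicality:estimateintro}, on the sharp upper bound of Proposition~\ref{prop:upper:estimate:norm:Q}, and on the uniform control of $\varphi'$ provided by Proposition~\ref{prop:properties:varphi2} through the convexity condition \ref{rapidly:decreasing:condition:e}.
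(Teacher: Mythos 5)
Your overall strategy --- test against a normalized reproducing kernel, differentiate, isolate a dominant term, and localize to $D_\delta(a)$ --- breaks down at its central step. After the iterated Leibniz expansion, your ``dominant'' term contains the factors $f_a$, $T_gf_a$, \dots, $T_g^{\ell}f_a$, and the localization/sub-mean-value argument would require pointwise \emph{lower} bounds for $|T_g^jf_a|$ on $D_\delta(a)$ for $j\ge1$. No such bounds exist: $T_g^jf_a(a)$ is an iterated integral of $g'$ against $f_a$ along radii and can be arbitrarily small (or even vanish) regardless of the size of $|g(a)|^{\sigma}|g'(a)|$, so the claimed pointwise estimate $\frac{|g(a)|^{(s-1)\ell}|g'(a)|^{\ell}}{(1+\varphi'(|a|))^{\ell}}\lesssim\opnorm{Q_g^{\sigma,\ell}}_{\Lp2}$ cannot be extracted this way. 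The paper avoids exactly this obstacle: in the base case $\ell=1$ it starts from the reproducing identity for $\widetilde G_a=g^{N\sigma}(g')^NK^{\om}_{a,0}$ --- which contains \emph{no} $T_g$-factors, so its value at $a$ is genuinely comparable to $\|g\|_{\B^s_\varphi}^{sN}\,\om(a)^{-1}\tau(a)^{-2}(1+\varphi'(a))^N$ --- and only then uses Stokes' theorem to \emph{introduce} $T_gK^{\om}_{a,0}$ under the integral sign, where it is controlled in norm (Lemma \ref{lem:estimate:integrals}) rather than pointwise.

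The second gap is the absorption of the non-dominant Leibniz terms. You propose to bound them from above by powers of $\|g\|_{\B^s_\varphi}$ via Proposition \ref{prop:upper:estimate:norm:Q}; but an inequality of the shape $\|g\|_{\B^s_\varphi}^{s\ell M}\lesssim (\text{dominant contribution})+\|g\|_{\B^s_\varphi}^{s\ell M-\epsilon}$ only yields $\|g\|_{\B^s_\varphi}^{s\ell}\lesssim\max\bigl(\opnorm{Q_g^{\sigma,\ell}}_{\Lp2},1\bigr)$, not the homogeneous estimate \eqref{eqn:lem1:sharp:lower:estimate:norm:word}. To close the self-improvement argument one needs every error term to carry a factor of $\opnorm{Q_g^{\sigma,\ell}}_{\Lp2}$, or of a lower-order $\opnorm{Q_g^{\sigma,j}}_{\Lp2}$ already known to dominate $\|g\|_{\B^s_\varphi}^{sj}$. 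This is precisely why the paper argues by induction on $\ell$: the induction step rewrites the derivative of $g^{N\sigma\ell}(T_g^{\ell}f)^N$ so that \emph{every} summand contains the factor $(T_g^{\ell+1}f)'$ and is therefore controlled by $\opnorm{Q_g^{\sigma,\ell+1}}_{\Lp2}$ times a power of $\|g\|_{\B^s_\varphi}$, and the induction hypothesis converts $\opnorm{Q_g^{\sigma,\ell}}_{\Lp2}^N$ back into $\|g\|_{\B^s_\varphi}^{s\ell N}$. Your one-shot scheme sets up neither the induction hypothesis nor this structural factorization, so the argument does not close.
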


\begin{proposition}\label{lem:sharp:lower:estimate:norm:word:2}
Let  $\sigma=\frac{m}n$, $m,n\in\N$. If $\om\in\SW$, then
\begin{equation}
\label{eqn:lem2:sharp:lower:estimate:norm:word1}
\opnorm{Q_g^{\sigma,n}}_{\Lp2} 
\lesssim\opnorm{S^m_gT^n_g}_{\Ap2}
\qquad(g\in\H(\overline{\D})).
\end{equation}
\end{proposition}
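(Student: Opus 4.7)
The starting point is the pointwise identity
\[
|Q_g^{\sigma,n}f(z)| \;=\; |g(z)|^{m}\,|T_g^n f(z)| \;=\; |g(z)^m\,T_g^n f(z)|,
\]
which yields $\|Q_g^{\sigma,n}f\|_{\Lp2} = \|g^m T_g^n f\|_{\Ap2} = \|M_g^m T_g^n f\|_{\Ap2}$. Since the analytic function $g^m T_g^n f$ vanishes at $0$ whenever $f\in \H_0(\D)$, the Leibniz rule together with $(T_g^k f)'=g'\,T_g^{k-1}f$ gives
\[
(g^m T_g^n f)' \;=\; m\,g^{m-1}g'\,T_g^n f + g^m g'\,T_g^{n-1}f \;=\; m\,(S_g^{m-1}T_g^{n+1}f)' + (S_g^m T_g^n f)',
\]
and integration from $0$ produces the operator identity
\[
M_g^m T_g^n \;=\; m\,S_g^{m-1}T_g^{n+1} \;+\; S_g^m T_g^n \qquad \text{on }\H_0(\D).
\]
Passing to moduli, $\Ap2$-norms and the supremum over the unit ball of $\Ap2(0)$ reduces the proposition to proving $\opnorm{S_g^{m-1}T_g^{n+1}}_{\Ap2} \lesssim \opnorm{S_g^m T_g^n}_{\Ap2}$, which I would establish by induction on $m$.

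The base case $m=1$ follows directly from Theorem~\ref{thm:compo}: applied to $L_g=S_g T_g^n\in W_g(0,1,n)$ it yields $\|T_g\|_{\Ap2}\lesssim \opnorm{S_g T_g^n}_{\Ap2}^{1/(n+1)}$, whence
\[
\opnorm{S_g^0 T_g^{n+1}}_{\Ap2} \;=\; \|T_g^{n+1}\|_{\Ap2} \;\le\; \|T_g\|_{\Ap2}^{n+1} \;\lesssim\; \opnorm{S_g T_g^n}_{\Ap2}.
\]
For the inductive step $m\ge 2$ I combine three ingredients: the upper bound Theorem~\ref{prop:sharp:upper:estimate:norm:word:ST}, giving $\opnorm{S_g^{m-1}T_g^{n+1}}_{\Ap2}\lesssim \|g\|_{\B^{s'}_\varphi}^{m+n}$ with $s'=(m+n)/(n+1)<s$; the radicality estimate \eqref{eq:i1} of Theorem~\ref{prop:radicality:estimateintro}, giving $\|g\|_{\B^{s'}_\varphi}\lesssim \|g\|_{\B^s_\varphi}$; and Proposition~\ref{prop:sharp:lower:estimate:norm:word:1}, giving $\|g\|_{\B^s_\varphi}^{m+n}\lesssim \opnorm{Q_g^{\sigma,n}}_{\Lp2}$.

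The main obstacle is that a naive substitution of these three bounds produces an inequality of the circular-looking form $\opnorm{Q_g^{\sigma,n}}_{\Lp2} \le K\,\opnorm{Q_g^{\sigma,n}}_{\Lp2} + \opnorm{S_g^m T_g^n}_{\Ap2}$. Closing it requires using the inductive hypothesis to first establish the equivalences $\opnorm{S_g^{m-j}T_g^{n+j}}_{\Ap2}\simeq \|g\|_{\B^{s_j}_\varphi}^{m+n}$ for $j=1,\dots,m-1$ (with $s_j=(m+n)/(n+j)$), and then iterating the identity $M_g^{m-j}T_g^{n+j}=(m-j)S_g^{m-j-1}T_g^{n+j+1}+S_g^{m-j}T_g^{n+j}$ to produce a telescoping representation of $\opnorm{S_g^{m-1}T_g^{n+1}}_{\Ap2}$ whose feedback into $\opnorm{Q_g^{\sigma,n}}_{\Lp2}$ is controlled using the strict inequality $s_j<s$ in the radicality step and absorbed, leaving $\opnorm{S_g^m T_g^n}_{\Ap2}$ as the dominant term. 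The reduction to $g\in \H(\overline{\D})$ provided by Proposition~\ref{prop:norm:dilations} and~\eqref{eqn:radialized:symbol:Bloch} is essential throughout to legitimize every differentiation and integration.
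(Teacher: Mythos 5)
Your opening reduction is correct: the identity $M_g^mT_g^n=m\,S_g^{m-1}T_g^{n+1}+S_g^mT_g^n$ on $\H_0(\D)$ is valid, and your base case $m=1$ is exactly the paper's (both rest on Theorem~\ref{thm:compo} via $\|T_g^{n+1}\|_{\Ap2}\le\|T_g\|_{\Ap2}^{n+1}\lesssim\opnorm{S_gT_g^n}_{\Ap2}$). The gap is the inductive step for $m\ge2$, and it is not merely a matter of missing details. The three ingredients you invoke give $\opnorm{S_g^{m-1}T_g^{n+1}}_{\Ap2}\lesssim\|g\|_{\B^{s'}_\varphi}^{m+n}\lesssim\|g\|_{\B^{s}_\varphi}^{m+n}\lesssim\opnorm{Q_g^{\sigma,n}}_{\Lp2}$, so the feedback term re-enters with an implicit constant that has no reason to be less than $1$. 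The radicality estimate \eqref{eq:i1} for $s_j<s$ carries no quantitative gain whatsoever --- no small constant and no sub-linear exponent --- so there is nothing to ``absorb'': the inequality $\opnorm{Q_g^{\sigma,n}}_{\Lp2}\le K\opnorm{Q_g^{\sigma,n}}_{\Lp2}+\opnorm{S_g^mT_g^n}_{\Ap2}$ with $K\gtrsim1$ is vacuous, and the telescoping you sketch only generates more terms of exactly the same order. Note also that the target of your induction, $\opnorm{S_g^{m-1}T_g^{n+1}}_{\Ap2}\lesssim\opnorm{S_g^mT_g^n}_{\Ap2}$, is true only a posteriori as a consequence of the full Theorem~\ref{thm:sharp:estimate:norm:word}; your inductive hypothesis supplies the links $j\ge1$ of the chain but never the top one, so the induction does not advance.

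What is actually needed --- and what the paper does --- is to avoid passing through the operator norm of $S_g^{m-1}T_g^{n+1}$ altogether and instead extract a \emph{strictly sub-linear} power of the unknown quantity. After the Littlewood--Paley step the single error term is $\|g^{m-1}g'\,T_g^nf\|_{A^p_{\om^{p/2}(1+\varphi')^{-p}}}$, which the paper factorizes pointwise as
\[
|g^{m-1}g'\,T_g^nf|\,(1+\varphi')^{-1}
=\frac{|g'|}{1+\varphi'}\;|T_g^nf|^{\frac1m}\;\bigl(|g|^m|T_g^nf|\bigr)^{\frac{m-1}m},
\]
so that H\"older's inequality with exponents $m$ and $\frac{m}{m-1}$ yields the bound $\|g\|_{\B_{\varphi}}\,\|T_g^nf\|_{\Ap2}^{1/m}\,\|Q_g^{\sigma,n}f\|_{\Lp2}^{(m-1)/m}$. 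Theorem~\ref{thm:compo} converts the prefactor into $\opnorm{S_g^mT_g^n}_{\Ap2}^{1/m}$ (via $\|g\|_{\B_\varphi}\simeq\|T_g\|_{\Ap2}$ and $\|T_g\|_{\Ap2}^{m+n}\lesssim\opnorm{S_g^mT_g^n}_{\Ap2}$), and the resulting self-improving inequality $x\lesssim1+x^{(m-1)/m}$ for $x=\opnorm{Q_g^{\sigma,n}}_{\Lp2}/\opnorm{S_g^mT_g^n}_{\Ap2}$ closes precisely because the exponent is less than $1$. This device is absent from your outline; without it, or an equivalent source of a sub-linear power or a genuinely small constant, the argument cannot be completed. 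Incidentally, the paper's proof needs neither Theorem~\ref{prop:sharp:upper:estimate:norm:word:ST}, nor Theorem~\ref{prop:radicality:estimateintro}, nor Proposition~\ref{prop:sharp:lower:estimate:norm:word:1} at this stage.
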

It is clear that combining Propositions \ref{lem:sharp:lower:estimate:norm:word:2} (case $\ell=n$) and \ref{prop:sharp:lower:estimate:norm:word:1} we get~{\eqref{eqn:sharp:lower:estimate:norm:word:ST}}. 
We begin with a proof of Proposition \ref{lem:sharp:lower:estimate:norm:word:2}.
\begin{proof}[{\bf Proof of Proposition \ref{lem:sharp:lower:estimate:norm:word:2}}]
Let $f\in \Ap2(0)$ such that $\|f\|_{\Ap2}=1$.
Since $|Q_g^{\sigma,n}f|=|g^m\,T_g^nf|$ and  
\[
(g^m\,T_g^nf)'
=m\,g^{m-1}g'\,T_g^nf+g^m(T_g^nf)'
=m\,g^{m-1}g'\,T_g^nf+(S_g^mT_g^nf)',
\]
we have that
\begin{equation}
\label{eqn:lem2:sharp:lower:estimate:norm:word2}
\|Q_g^{\sigma,n}f\|_{\Lp2}
\lesssim\|g^{m-1}g'\,T_g^nf\|_{A^p_{\om^{p/2}(1+\varphi')^{-p}}}+\|S_g^mT_g^nf\|_{\Ap2}.
\end{equation}
If $m=1$ then Theorem~\ref{thm:compo} implies that
\begin{align*}
\|g^{m-1}g'\,T_g^nf\|_{A^p_{\om^{p/2}(1+\varphi')^{-p}}}
&\le\|g\|_{\B_{\varphi}}\,\|T_g^nf\|_{\Ap2}\\
&\lesssim \|T_g\|^{1+n}_{\Ap2}
\lesssim\opnorm{S^m_gT^n_g}_{\Ap2},
\end{align*}
so  \eqref{eqn:lem2:sharp:lower:estimate:norm:word2} gives 
\eqref{eqn:lem2:sharp:lower:estimate:norm:word1} for $m=1$.
 Thus from now on assume that $m>1$. 
Since
\begin{align*}
|g^{m-1}g'\,T_g^nf|\,(1+\varphi')^{-1}
&=(|g'| (1+\varphi')^{-1})\,|T_g^nf|^{\frac1m}\,
\Bigl(|g|^m|T_g^nf|\Bigr)^{\frac{m-1}m}\\
\nonumber
&=(|g'|(1+\varphi')^{-1})\,|T_g^nf|^{\frac1m}\,
|Q^{\sigma,n}f|^{\frac{m-1}m},
\end{align*}
 we may apply H\"older's inequality with exponents $m$ and $\frac{m}{m-1}$ and Theorem~\ref{thm:compo} to get 
\begin{align}
\label{eqn:lem2:sharp:lower:estimate:norm:word3}
\|g^{m-1}g'\,T_g^nf\|_{A^p_{\om^{p/2}(1+\varphi')^{-p}}}
&\le\|g\|_{\B_{\varphi}}\,\|T_g^nf\|_{\Ap2}^{\frac1m} \,\|Q^{\sigma,n}f\|_{\Lp2}^{\frac{m-1}m}\\
\nonumber
&\lesssim\|T_g\|_{\Ap2}^{\frac{m+n}m} \,\opnorm{Q^{\sigma,n}}_{\Lp2}^{\frac{m-1}m}\\
\nonumber
&\lesssim\opnorm{S^m_gT^n_g}_{\Ap2}^{\frac1m} \,\opnorm{Q^{\sigma,n}}_{\Lp2}^{\frac{m-1}m}.
\end{align} 
By \eqref{eqn:lem2:sharp:lower:estimate:norm:word2} and \eqref{eqn:lem2:sharp:lower:estimate:norm:word3} we obtain that
\begin{equation*}
\|Q_g^{\sigma,n}f\|_{\Lp2}
\lesssim\opnorm{S^m_gT^n_g}_{\Ap2}^{\frac1m} \opnorm{Q_g^{\sigma,n}}_{\Lp2} ^{\frac{m-1}m}
+\opnorm{S^m_gT^n_g}_{\Ap2}.
\end{equation*}
Thus, either $\opnorm{S^m_gT^n_g}_{\Ap2}=\opnorm{Q_g^{\sigma,n}}_{\Lp2}=0$
or $0<\opnorm{S^m_gT^n_g}_{\Ap2}<\infty$ and 
\begin{equation*}
 \frac{\opnorm{Q_g^{\sigma,n}}_{\Lp2} }{\opnorm{S^m_gT^n_g}_{\Ap2}} \lesssim 
 1+\left( \frac{\opnorm{Q_g^{\sigma,n}}_{\Lp2} }{\opnorm{S^m_gT^n_g}_{\Ap2}}\right)^{\frac{m-1}m}. 
\end{equation*}
Therefore \eqref{eqn:lem2:sharp:lower:estimate:norm:word1} holds and that ends the proof of the proposition.
\end{proof}
\subsection{Proof of Proposition \ref{prop:sharp:lower:estimate:norm:word:1}}
The proof is by induction on $\ell$. Since the proof is lengthy, we split it into two propositions. The case $\ell=1$ is just the following proposition, and the induction step will be done in 
Proposition~{\ref{lem:sharp:lower:estimate:norm:word:4}.}

\begin{proposition}\label{lem:sharp:lower:estimate:norm:word:3}
Let $\sigma\in\Q$, $\sigma>0$, and let $\om=e^{-2\varphi}\in \SW$. Then
\begin{equation}
\label{eqn:lem3:sharp:lower:estimate:norm:word}
\|g\|^s_{\B^s_{\varphi}}\lesssim
\opnorm{Q_g^{\sigma,1}}_{\Lp2} 
\qquad(g\in\H(\overline{\D})),
\end{equation}
where $s=\sigma+1$.
\end{proposition}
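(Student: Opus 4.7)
Fix $a\in\D$; the target is the pointwise lower bound $|g(a)|^\sigma|g'(a)|/(1+\varphi'(|a|))\lesssim\opnorm{Q_g^{\sigma,1}}_{\Lp2}$, which on taking $\sup_{a\in\D}$ yields~\eqref{eqn:lem3:sharp:lower:estimate:norm:word}. By Proposition~\ref{prop:properties:varphi}~\ref{item5:properties:varphi},~\ref{item6:properties:varphi} we have $\tau(a)(1+\varphi'(|a|))\gtrsim 1$ uniformly in $\D$, so it suffices to show $|g(a)|^\sigma|g'(a)|\,\tau(a)\lesssim\opnorm{Q_g^{\sigma,1}}_{\Lp2}$. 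The test function is the normalized centered Bergman reproducing kernel
\[
f_a:=\frac{K_a^\omega-K_a^\omega(0)}{\|K_a^\omega\|_{\Ap2}}\in\Ap2(0),\qquad \|f_a\|_{\Ap2}\lesssim 1,
\]
for which~\eqref{eqn:estimate:Apomegap:norm:Bergman:kernel}--\eqref{5.3tris} give the pointwise control $|f_a(z)|\simeq\omega(a)^{-1/2}\tau(a)^{-2/p}$ uniformly on a disc $D_\delta(a)$ (with $\delta$ as in Proposition~\ref{prop:properties:varphi}~\ref{item6:properties:varphi}).

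Write $\sigma=m/n$ with $m,n\in\N$ coprime. This rationality allows one to replace the non-analytic quantity $Q_g^{\sigma,1}f_a=|g|^\sigma T_gf_a$ by the genuinely analytic function $G_a:=g^m(T_gf_a)^n$, for which $|G_a|=|Q_g^{\sigma,1}f_a|^n$. Setting $F:=T_gf_a$, one has $F(0)=0$ and $F'(a)=f_a(a)g'(a)$; multiplying the Taylor expansions of $g$ and $F$ around $a$ shows that the $n$-th Taylor coefficient of $G_a$ at $a$ decomposes as
\[
\frac{G_a^{(n)}(a)}{n!}\;=\;g(a)^m\,f_a(a)^n\,g'(a)^n\;+\;R_a,
\]
where $R_a$ is a finite sum of cross-terms each containing at least one factor of $T_gf_a(a)$, $f_a^{(k)}(a)$ with $k\ge 1$, or $g^{(k)}(a)$ with $k\ge 2$. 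Cauchy's integral formula for the $n$-th derivative, applied on a concentric subdisc of $D_\delta(a)$ and combined with Stokes' theorem, represents $G_a^{(n)}(a)$ as a weighted area integral of $G_a$. Using the sub-mean-value inequality for the subharmonic function $|G_a|^{p/n}=|Q_g^{\sigma,1}f_a|^p$ on $D_\delta(a)$ (where $\omega\simeq\omega(a)$), this yields
\[
\bigl|g(a)^m\,f_a(a)^n\,g'(a)^n\bigr|\,\tau(a)^n
\;\lesssim\;\bigl(\omega(a)^{-n/2}\tau(a)^{-2n/p}\bigr)\,\opnorm{Q_g^{\sigma,1}}_{\Lp2}^n\;+\;|R_a|\,\tau(a)^n.
\]
Substituting $|f_a(a)|^n\simeq\omega(a)^{-n/2}\tau(a)^{-2n/p}$ on the left, the main term becomes $|g(a)|^{\sigma n}|g'(a)|^n\tau(a)^n\lesssim\opnorm{Q_g^{\sigma,1}}_{\Lp2}^n$, and taking $n$-th roots gives the desired bound on the first summand.

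The main obstacle is the treatment of the remainder $R_a$: each of its cross-terms contains $T_gf_a(a)$ or a higher-order derivative at $a$ (of either $f_a$ or $g$), none of which can be suppressed by the choice of the test function $f_a$ alone. The proof handles them through pointwise Cauchy-type estimates for the Bergman kernel on the weighted Bergman space attached to the \emph{perturbed} weight $\omega\,|g|^{2\sigma}$, combined with the already-established upper bound $\|Q_g^{\sigma,\ell}\|_{\Lp2}\lesssim\|g\|_{\B^s_\varphi}^{s\ell}$ from Proposition~\ref{prop:upper:estimate:norm:Q}, which absorbs the Bloch-type factors these estimates produce. This coordinated use of Stokes' theorem, a representation formula, and pointwise perturbed-kernel bounds is precisely what distinguishes the Bergman argument from the Hardy-space one, where instead tent spaces and Carleson-measure descriptions of the symbol class play the analogous role.
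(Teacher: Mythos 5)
Your overall architecture (test $Q_g^{\sigma,1}$ on the normalized reproducing kernel, pass to the analytic function $g^m(T_gf_a)^n$, and recover $g'(a)$ via a representation formula plus Stokes' theorem) is in the same spirit as the paper's proof, but the quantitative core of your argument fails: the inequality you reduce to, namely $|g(a)|^{\sigma}|g'(a)|\,\tau(a)\lesssim\opnorm{Q_g^{\sigma,1}}_{\Lp2}$, is false with a constant independent of $g$. By Proposition~\ref{prop:properties:varphi}~\ref{item5:properties:varphi} we have $\tau(a)\bigl(1+\varphi'(|a|)\bigr)\to\infty$, so your target is strictly stronger than \eqref{eqn:lem3:sharp:lower:estimate:norm:word}. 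Combined with the matching upper bound \eqref{eqn:upper:estimate:norm:Q} and the dilation invariance (Proposition~\ref{pr:radialized:symbol:Bloch}), it would force
\begin{equation*}
\sup_{a\in\D}|g(a)|^{\sigma}|g'(a)|\,\tau(a)
\;\lesssim\;\|g\|^{s}_{\B^s_{\varphi}}
=\sup_{a\in\D}\frac{s\,|g(a)|^{\sigma}|g'(a)|}{1+\varphi'(|a|)}
\end{equation*}
for every $g\in\B^s_{\varphi}$; this is absurd whenever the quotient $|g|^{\sigma}|g'|/(1+\varphi')$ does not vanish at the boundary, since along such a sequence the left-hand side is $\gtrsim\tau(a_k)\bigl(1+\varphi'(a_k)\bigr)\to\infty$. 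Hence no treatment of the remainder $R_a$ can rescue the scheme. The root of the problem is applying Cauchy's formula on a disc of radius $\simeq\tau(a)$: the only scale at which a derivative of $g$ may be removed "for free" here is $1/(1+\varphi'(a))$, which is much smaller than $\tau(a)$; this is precisely the content of Proposition~\ref{prop:properties:varphi2} and Lemma~\ref{le:crecimientog-gprima}, which the paper invokes (through \eqref{eqn:estimate:second:derivative}) for the one unavoidable derivative loss, the $g''$-term.

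Two further steps are also incorrect or unsubstantiated. First, $\om$ is \emph{not} comparable to $\om(a)$ on $D_{\delta}(a)$: the oscillation $\varphi(z)-\varphi(a)$ there can be of order $\varphi'(a)\tau(a)\to\infty$, so neither your uniform bound $|f_a|\simeq\om(a)^{-1/2}\tau(a)^{-2/p}$ on $D_{\delta}(a)$ (only the weighted quantity $|K^{\om}_a|\,\om^{1/2}$ is essentially constant there, by \eqref{5.3tris}) nor the sub-mean-value step as written is valid. Second, $R_a$ contains the cross-term carrying $g''(a)$, which is of the same order as your main term, and the appeal to "perturbed-kernel bounds" is not an argument. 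By contrast, the paper keeps the correct normalization $(1+\varphi')^{-N}$ in place throughout: it represents $g^{N\sigma}(g')^{N}K^{\om}_{a,0}$ at $a$ by the reproducing formula, removes exactly one factor of $g'$ by writing the integrand as $F_1\,\tfrac{\partial F_2}{\partial z}$ with $F_2=(T_gK^{\om}_{a,0})\overline{K^{\om}_a}$ and integrating by parts --- so that $Q^{\sigma,1}_gK^{\om}_{a,0}$ appears --- controls the resulting $g''$-term by \eqref{eqn:estimate:second:derivative} and the $\tfrac{\partial\om}{\partial z}$-term by the identity $|\tfrac{\partial\om}{\partial z}|=2\om\varphi'$, and concludes with Lemma~\ref{lem:estimate:integrals} before dividing out the factor $\|g\|_{\B^s_{\varphi}}^{s(N-1)}$.
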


The proof of Proposition \ref{lem:sharp:lower:estimate:norm:word:3} requires the following technical lemmas.
Recall that $K^\om_a$ denotes the Bergman reproducing kernel of $A^2_\om$ at  the point $a$ (see \ref{subsec:Bergman:kernels}). 

\begin{lemma}
Let $\om=e^{-2\varphi}\in \SW$, $0<p<\infty$, and $\b,N\ge 0$. Then:
\begin{gather}
\label{eqn1:le:estimates}
\sup_{z\in\D} |K^\om_a(z)|\, \frac{\om(z)^{1/2}\,\left(1+\varphi'(z)\right)^{N}}{\tau(z)^{\b}}
\simeq  \frac{ \left(1+\varphi'(a\right)^{N}}{ \om(a)^{1/2}\tau(a)^{\b+2}} \qquad(a\in\D).
\\
\label{eqn2:le:estimates}
\|K^\om_a\|^p_{A^p_{\om^{p/2}\left(1+\varphi'\right)^\beta}}
\simeq 
\frac{\left(1+\varphi'(a)\right)^\beta} {\om(a)^{p/2}\tau(a)^{2p-2}}\qquad(a\in\D). 
\end{gather}
\end{lemma}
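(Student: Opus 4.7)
The plan is to derive both estimates from the kernel estimates already recalled in Subsection~\ref{subsec:Bergman:kernels}, namely the global upper bound \eqref{5.3}, the polynomial decay of the exponential factor \eqref{5.4}, the two-sided local comparability \eqref{5.3tris}, together with the local comparabilities of $\tau$ and $1+\varphi'$ on $D_\delta(a)$ given in \eqref{eqn:tau:1+varphiprime:on:Ddeltaa}. The unifying device is a splitting $\D = D_\delta(a) \cup (\D\setminus D_\delta(a))$, where on the local piece the kernel behaves like $(\om(a)\om(z))^{-1/2}(\tau(a)\tau(z))^{-1}$ and the weight factors $(1+\varphi')^{N}/\tau^{\beta}$ (resp.\ $(1+\varphi')^{\beta}$) are comparable to their values at $a$, while on the complement one must use the exponential decay to absorb the extra polynomial growth of these weight factors.

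For \eqref{eqn1:le:estimates}, the lower bound on the supremum is obtained by evaluating at $z=a$ (or any point of $D_\delta(a)$) with $\delta$ small enough that \eqref{5.3tris} and \eqref{eqn:tau:1+varphiprime:on:Ddeltaa} apply. For the upper bound, on $D_\delta(a)$ the pointwise expression is directly $\simeq (1+\varphi'(|a|))^{N}/(\om(a)^{1/2}\tau(a)^{\beta+2})$ by the same local comparabilities. On $\D\setminus D_\delta(a)$ I would combine \eqref{5.3} with \eqref{5.4} to obtain
\[
|K^\om_a(z)|\,\om(z)^{1/2}\,\frac{(1+\varphi'(|z|))^{N}}{\tau(z)^{\beta}}
\lesssim
\frac{1}{\om(a)^{1/2}\tau(a)\tau(z)^{\beta+1}}
\Bigl(\tfrac{\min(\tau(a),\tau(z))}{|a-z|}\Bigr)^{M}(1+\varphi'(|z|))^{N},
\]
and then exploit condition \ref{rapidly:decreasing:condition:e} of \SW, which forces $(1+\varphi'(|z|))(\tau(z))^{\eta}\lesssim(1+\varphi'(|a|))(\tau(a))^{\eta}$ whenever $|z|\ge|a|$ (and monotonicity of $\varphi'$ handles $|z|\le|a|$ trivially). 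Choosing $M$ large enough, say $M\ge \beta+1+N\eta$, and using $|a-z|\ge \delta\tau(a)$, gives the desired bound.

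Estimate \eqref{eqn2:le:estimates} is proved by the same splitting. The lower bound follows by restricting the integral to $D_\delta(a)$, applying \eqref{5.3tris} and \eqref{eqn:tau:1+varphiprime:on:Ddeltaa}, and using $|D_\delta(a)|\simeq\tau(a)^{2}$. For the upper bound on $\D\setminus D_\delta(a)$, I would insert \eqref{5.3} and \eqref{5.4} to reduce matters to an integral of the form
\[
\int_{|z-a|>\delta\tau(a)}
\frac{(1+\varphi'(|z|))^{\beta}}{\tau(z)^{p+\beta}}\,
\Bigl(\tfrac{\min(\tau(a),\tau(z))}{|a-z|}\Bigr)^{pM}\,dA(z),
\]
and then apply \ref{rapidly:decreasing:condition:e} exactly as above, splitting according to whether $|z|\gtrless|a|$, and choosing $M$ sufficiently large so that the resulting integral is comparable to $\tau(a)^{-2p+2}(1+\varphi'(|a|))^{\beta}$.

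The main obstacle I anticipate is the off-diagonal region $\D\setminus D_\delta(a)$: one must verify quantitatively that the polynomial decay provided by \eqref{5.4} outperforms simultaneously the growth of $(1+\varphi'(|z|))^{N}$ (or $(1+\varphi'(|z|))^{\beta}$) and the inverse power of $\tau(z)$, uniformly in $a\in\D$. The essentially decreasing property $(1+\varphi')\tau^\eta\downarrow$ from \ref{rapidly:decreasing:condition:e} is exactly the right tool, converting the unbounded factor $1+\varphi'(|z|)$ into a controlled power of $\tau(a)/\tau(z)$, which is then absorbed into the exponential decay at the price of increasing the parameter $M$.
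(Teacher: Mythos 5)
Your plan is correct and follows essentially the same route as the paper's proof: a local/off-diagonal splitting $\D=D_\delta(a)\cup D_\delta(a)^c$, with \eqref{5.3tris} and \eqref{eqn:tau:1+varphiprime:on:Ddeltaa} handling the two-sided local estimate, and \eqref{5.3}, \eqref{5.4} together with the essentially decreasing property of $(1+\varphi')\tau^{\eta}$ from \ref{rapidly:decreasing:condition:e} (split according to $|z|\lessgtr|a|$, with $M$ chosen large) controlling the tail. The only cosmetic differences are that the paper invokes the global norm estimates \eqref{eqn:estimate:weighted:Linfty:norm:Bergman:kernel}--\eqref{eqn:estimate:Apomegap:norm:Bergman:kernel} on the region $|z|\le|a|$ and sums the tail integral over dyadic annuli, neither of which changes the substance of the argument.
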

\begin{proof}
Take $\delta>0$ small enough such that \eqref{eqn:tau:1+varphiprime:on:Ddeltaa} and \eqref{5.3bis} hold.
Then \eqref{5.3tris} directly gives the estimates
\begin{gather*}
\sup_{z\in D_{\delta}(a)} |K^\om_a(z)|\,
\frac{\om(z)^{1/2}\,\left(1+\varphi'(z)\right)^{N}} {\tau(z)^{\b}} \simeq 
 \frac{\left(1+\varphi'(a)\right)^{N}}
{\om(a)^{1/2}\,\tau(a)^{\b+2}} \quad\mbox{and}
\\
\int_{D_{\delta}(a)}|K^\om_a|^p\om^{p/2}\left(1+\varphi'\right)^\beta\,dA
\simeq
\frac{\left(1+\varphi'(a)\right)^\beta}{\om(a)^{p/2}\tau(a)^{2p-2}},
\qquad\mbox{for $a\in\D$,}
\end{gather*}
so \eqref{eqn1:le:estimates} and \eqref{eqn2:le:estimates} will be proved once we have shown the upper  estimates
\begin{align}
\label{eqn3:le:estimates}
\sup_{z\in D_{\delta}(a)^c} \Phi(z)
&\lesssim
 \frac{\left(1+\varphi'(a)\right)^{N}}
{\om(a)^{1/2}\,\tau(a)^{\b+2}}
\qquad(a\in\D)
\\
\label{eqn4:le:estimates}
\int_{D_{\delta}(a)^c}\Psi\,dA
&\lesssim
\frac{\left(1+\varphi'(a)\right)^\beta}{\om(a)^{p/2}\tau(a)^{2p-2}}
\qquad(a\in\D),
\end{align}
where $\Phi:= |K^\om_a|\,
\frac{\om^{1/2}\,\left(1+\varphi'\right)^{N}} {\tau^{\b}}$, $\Psi:=|K^\om_a|^p\om^{p/2}\left(1+\varphi'\right)^\beta$, and $D_{\delta}(a)^c:=\D\setminus D_{\delta}(a)$.
In order to prove these estimates,  for every $a\in\D$, we consider the partition of $D_{\delta}(a)^c$ by the regions $R(a):=\{z\in D_{\delta}(a)^c:|z|\le|a|\}$ and $R'(a):=D_{\delta}(a)^c\setminus R(a)$.
First, note that, since $\varphi'$ is non-decreasing and $\tau$ is decreasing on $[0,1)$ \eqref{eqn:estimate:weighted:Linfty:norm:Bergman:kernel} and \eqref{eqn:estimate:Apomegap:norm:Bergman:kernel} give:
\begin{align}
\label{eqn5:le:estimates}
\sup_{z\in R(a)}\Phi(z)
&\lesssim \frac{\left(1+\varphi'(a)\right)^{N}}
                     {\om(a)^{1/2}\,\tau(a)^{\b+2}}
\qquad(a\in\D).
\\
\label{eqn6:le:estimates}
\int_{R(a)}\Psi\,dA
&\lesssim\frac{\left(1+\varphi'(a)\right)^\beta}
{\om(a)^{p/2}\tau(a)^{2p-2}}
\quad(a\in\D).
\end{align}
Let $\eta>0$ so that \ref{rapidly:decreasing:condition:e} and \eqref{5.3} hold and  
$M>\max\{\beta+N\eta+1,1+\tfrac{\eta\beta}{p}, \tfrac{2}{p}\}$. 
Then, taking also into account \eqref{5.4}, we have that
\begin{align*}
\Phi(z)
& \lesssim \frac{\left(1+\varphi'(z)\right)^{N}}{\om(a)^{1/2}\,\tau(a)\tau(z)^{\b+1} }   \Big( \frac{\min( \tau(a), \tau(z) }{|a-z|} \Big)^{M}
\\ 
& \lesssim   
\frac{\left(1+\varphi'(z)\right)^{N}\bigl(\min(\tau(a),\tau(z)\bigr)^{M}}
      {\om(a)^{1/2}\,\tau(a)^{1+M}\,\tau(z)^{\b+1}}
\le 
\frac{\left(1+\varphi'(z)\right)^{N}\tau(z)^{M-\b-1}}
      {\om(a)^{1/2}\,\tau(a)^{1+M}} 
\\ 
&= 
\frac{\left((1+\varphi'(z))\tau^{\eta}(z)\right)^{N}\tau(z)^{M-\b-N\eta-1}}
      {\om(a)^{1/2}\,\tau(a)^{1+M}} 
\le
\frac{\left(1+\varphi'(a)\right)^{N}}
      {\om(a)^{1/2}\,\tau(a)^{2+\b}},\mbox{ for $z\in R'(a)$}. 
\end{align*}
This estimate together with \eqref{eqn5:le:estimates} proves \eqref{eqn3:le:estimates}. In order to give an upper bound of $\int_{R'(a)}\Psi\,dA$ we partition $R'(a)$ by the regions
\begin{equation*}
R'_k(a):=\{z\in\D: |a|<|z|,\,2^{k-1}\delta\tau(a)\le |z-a|< 2^{k}\delta\tau(a)\}
\qquad(k\in\N).
\end{equation*} 
Then, by \eqref{5.3}, \eqref{5.4} and condition \ref{rapidly:decreasing:condition:e}, we have that
\begin{align*}
\Psi(z)
&\lesssim   
\frac{\bigl(\min(\tau(a),\tau(z))\bigr)^{Mp}}{\om(a)^{p/2}\,\tau(a)^{p}\,\tau(z)^{p}\,|a-z|^{Mp}}
\left(1+\varphi'(z)\right)^\beta
\\ 
&\simeq 
\frac1{2^{kMp}}\,
\frac{\bigl(\min(\tau(a),\tau(z))\bigr)^{Mp}}
{\om(a)^{p/2}\,\tau(a)^{(M+1)p}\,\tau(z)^{p}}
\left(1+\varphi'(z)\right)^\beta
\\
&\le 
\frac1{2^{kMp}}\,
\frac{\bigl(\min(\tau(a),\tau(z))\bigr)^{(M-1)p}}
{\om(a)^{p/2}\,\tau(a)^{(M+1)p}}
\left(1+\varphi'(z)\right)^\beta
\\
&\le 
\frac1{2^{kMp}}\,
\frac{\bigl(\min(\tau(a),\tau(z))\bigr)^{(M-1)p-\eta\beta}}
{\om(a)^{p/2}\,\tau(a)^{(M+1)p}}
\left(\bigl(1+\varphi'(z)\bigr)\tau^{\eta}(z)\right)^\beta
\\
&\lesssim 
\frac1{2^{kMp}}\,
\frac{\bigl(\min(\tau(a),\tau(z))\bigr)^{(M-1)p-\eta\beta}}
{\om(a)^{p/2}\,\tau(a)^{(M+1)p-\eta\beta}}
\bigl(1+\varphi'(a)\bigr)^\beta
\\
&\le 
\frac1{2^{kMp}}\,
\frac{\bigl(1+\varphi'(a)\bigr)^\beta}
{\om(a)^{p/2}\,\tau(a)^{2p}},\quad\mbox{for $a\in\D$, $z\in R'_k(a)$ and $k\in\N$,} 
\end{align*}
and so
\begin{equation*}
\int_{R'_k(a)}\Psi\,dA
\lesssim  \frac1{2^{k(Mp-2)}}\,
\frac{\bigl(1+\varphi'(a)\bigr)^\beta}
{\om(a)^{p/2}\,\tau(a)^{2p-2}}
\qquad(a\in\D,k\in\N).
\end{equation*}
Therefore  
\begin{equation}\label{eqn7:le:estimates}
\int_{R'(a)}\Psi\,dA=\sum_{k=1}^{\infty}\int_{R'_k(a)}\Psi\,dA
\lesssim \frac{\bigl(1+\varphi'(a)\bigr)^\beta}
{\om(a)^{p/2}\,\tau(a)^{2p-2}}\qquad(a\in\D).
\end{equation}
Hence \eqref{eqn4:le:estimates} directly follows from \eqref{eqn6:le:estimates} and  \eqref{eqn7:le:estimates}.
\end{proof}

\begin{lemma}\label{le:crecimientog-gprima}
Let $\alpha>0$ and let $\psi$ be a function satisfying \eqref{eq:compatibility} and \eqref{eq:cond-crecimiento}. Then
\begin{equation}\label{eqn:derivative:lesssim:function}
\sup_{0\le r<1} \frac{M_\infty(r,g')}{\psi'(r)^{\alpha+1}}
\lesssim
\sup_{0\le r<1}\frac{M_\infty(r,g)}{\psi'(r)^{\alpha}}
\qquad(g\in\H(\D)).
\end{equation}
\end{lemma}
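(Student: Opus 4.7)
The plan is to derive the estimate by a direct application of Cauchy's integral formula with a radius of shrinkage cleverly chosen to be $\delta/\psi'(r)$, which is precisely the scale at which the hypotheses \eqref{eq:compatibility} and \eqref{eq:cond-crecimiento} are tailored.

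First, I would fix $r\in[0,1)$ and $z$ with $|z|=r$, and apply Cauchy's integral formula
\[
g'(z)=\frac{1}{2\pi i}\oint_{|\zeta-z|=\delta/\psi'(r)}\frac{g(\zeta)}{(\zeta-z)^2}\,d\zeta,
\]
which is legitimate thanks to \eqref{eq:compatibility}: the circle $|\zeta-z|=\delta/\psi'(r)$ lies inside $\D$ because every point $\zeta$ on it satisfies $|\zeta|\le r+\delta/\psi'(r)<1$. The standard Cauchy bound then gives
\[
|g'(z)|\le\frac{\psi'(r)}{\delta}\,M_\infty\!\left(r+\tfrac{\delta}{\psi'(r)},g\right),
\]
and taking the supremum over $|z|=r$ yields the pointwise inequality
\[
M_\infty(r,g')\le\frac{\psi'(r)}{\delta}\,M_\infty\!\left(r+\tfrac{\delta}{\psi'(r)},g\right).
\]

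Next I would insert the assumed bound $M_\infty(s,g)\le\bigl(\sup_{0\le t<1}\frac{M_\infty(t,g)}{\psi'(t)^{\alpha}}\bigr)\,\psi'(s)^{\alpha}$ at the point $s=r+\delta/\psi'(r)$, obtaining
\[
M_\infty(r,g')\le\frac{\psi'(r)}{\delta}
\left(\sup_{0\le t<1}\frac{M_\infty(t,g)}{\psi'(t)^{\alpha}}\right)
\psi'\!\left(r+\tfrac{\delta}{\psi'(r)}\right)^{\alpha}.
\]
Now condition \eqref{eq:cond-crecimiento} is precisely what is needed: it supplies a constant $C>0$ such that $\psi'\bigl(r+\delta/\psi'(r)\bigr)\le C\,\psi'(r)$ uniformly in $r$. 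Substituting this and dividing both sides by $\psi'(r)^{\alpha+1}$ gives
\[
\frac{M_\infty(r,g')}{\psi'(r)^{\alpha+1}}\le\frac{C^{\alpha}}{\delta}\sup_{0\le t<1}\frac{M_\infty(t,g)}{\psi'(t)^{\alpha}},
\]
and taking the supremum over $r\in[0,1)$ yields \eqref{eqn:derivative:lesssim:function}.

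There is really no serious obstacle here; the lemma is a clean application of Cauchy's estimate once one recognises that the two hypotheses on $\psi$ were engineered exactly to allow the choice of radius $\delta/\psi'(r)$ in the Cauchy kernel and to absorb the resulting shift $r\mapsto r+\delta/\psi'(r)$ inside the growth class defined by $\psi'(r)^{\alpha}$. The only point worth double checking would be that the chosen $\delta$ of Proposition~\ref{prop:properties:varphi2} can be used simultaneously for \eqref{eq:compatibility} and \eqref{eq:cond-crecimiento}, but that is already guaranteed by the statement of that proposition.
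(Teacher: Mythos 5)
Your proposal is correct and follows essentially the same route as the paper: the paper likewise applies the Cauchy estimate \eqref{eq:Cauchy} with $\rho=r+\delta/\psi'(r)$ (you just rederive that estimate from Cauchy's integral formula) and then absorbs the shift via \eqref{eq:cond-crecimiento}. The only cosmetic difference is the order in which the growth condition is applied to the numerator versus the denominator, which is immaterial.
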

\begin{proof}
By \eqref{eq:Cauchy} with $\rho=\rho(r)=r+\frac{\delta}{\psi'(r)}$ and \eqref{eq:cond-crecimiento}, we have the estimate
\begin{equation*}
 \frac{M_\infty(r,g')}{\psi'(r)^{\alpha+1}} 
\le \frac{C}{\delta} \frac{M_\infty\left( r+\frac{\delta}{\psi'(r)},g \right)}{\psi'(r)^{\alpha}}
\lesssim \frac{M_\infty\left( r+\frac{\delta}{\psi'(r)},g \right)}{\psi'\left( r+\frac{\delta}{\psi'(r)}\right)^{\alpha}}
 \le \sup_{0\le s<1}\frac{M_\infty(s,g)}{\psi'(s)^{\alpha}}, 
\end{equation*}
for $0\le r<1$.
This finishes the proof.
\end{proof}

\begin{lemma}
Let $\om=e^{-2\varphi}\in\SW$, $\sigma=\frac{m}{n}$, where $m,n\in\N$, and $s=\sigma+1$. Then 
\begin{equation}
\label{eqn:estimate:second:derivative}	
\sup_{z\in\D}
\frac{|g(z)|^m\,|g'(z)|^{n-1}\,|g''(z)|}
{\left(1+\varphi'(z)\right)^{n+1}}\lesssim \|g\|^{sn}_{ \B^s_{\varphi}}\qquad(g\in\H(\D)). 
\end{equation}
\end{lemma}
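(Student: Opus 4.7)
\medskip
\noindent\textbf{Proof proposal.}
Assume without loss of generality that $0<\|g\|_{\B^s_{\varphi}}<\infty$ (otherwise the inequality is trivial). Fix $z\in\D$ and let $\delta>0$ be as in Proposition~\ref{prop:properties:varphi}~\ref{item6:properties:varphi} (so that $\overline{D_{\delta}(z)}\subset\D$, $\tau(w)\simeq\tau(z)$ and $1+\varphi'(|w|)\simeq 1+\varphi'(|z|)$ for $w\in\overline{D_{\delta}(z)}$). The key estimate comes from the Lipschitz characterization \eqref{eqn:Bqphi:Lipschitz} combined with \eqref{eqn:behavior:betaphi:close:points}, which yields
\[
\bigl||g(w)|^s-|g(z)|^s\bigr|\le C_1\|g\|^s_{\B^s_{\varphi}}\,\tau(z)\bigl(1+\varphi'(|z|)\bigr)\qquad(w\in\overline{D_{\delta}(z)}),
\]
and from the defining inequality $|g(w)|^{\sigma}|g'(w)|\le\tfrac{1}{s}\|g\|^s_{\B^s_{\varphi}}(1+\varphi'(|w|))$. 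The plan is to split into two cases according to the size of $|g(z)|^s$ relative to $\|g\|^s_{\B^s_{\varphi}}\tau(z)(1+\varphi'(|z|))$.

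\smallskip
\emph{Case 1 (small symbol): $|g(z)|^s\le 2C_1\|g\|^s_{\B^s_{\varphi}}\,\tau(z)\bigl(1+\varphi'(|z|)\bigr)$.} Here the Lipschitz estimate gives
\[
M:=\sup_{w\in\overline{D_{\delta}(z)}}|g(w)|\lesssim \|g\|_{\B^s_{\varphi}}\bigl(\tau(z)(1+\varphi'(|z|))\bigr)^{1/s}.
\]
By Cauchy's estimates applied to $g$ on $\overline{D_{\delta}(z)}$, we have $|g(z)|\le M$, $|g'(z)|\lesssim M/\tau(z)$ and $|g''(z)|\lesssim M/\tau(z)^2$. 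Multiplying these bounds (and using $sn=m+n$, so $(m+n)/s=n$) yields
\[
|g(z)|^m|g'(z)|^{n-1}|g''(z)|\lesssim \frac{M^{m+n}}{\tau(z)^{n+1}}\lesssim \|g\|^{sn}_{\B^s_{\varphi}}\,\frac{(1+\varphi'(|z|))^n}{\tau(z)}.
\]

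\smallskip
\emph{Case 2 (large symbol): $|g(z)|^s\ge 2C_1\|g\|^s_{\B^s_{\varphi}}\,\tau(z)\bigl(1+\varphi'(|z|)\bigr)$.} In this regime the Lipschitz inequality forces $|g(w)|\simeq |g(z)|=:G_0$ on $\overline{D_{\delta}(z)}$, and the defining bound applied pointwise gives
\[
|g'(w)|\lesssim \frac{\|g\|^s_{\B^s_{\varphi}}(1+\varphi'(|z|))}{G_0^{\sigma}}\qquad(w\in\overline{D_{\delta}(z)}).
\]
A Cauchy estimate applied to the analytic function $g'$ on $\overline{D_{\delta/2}(z)}$ then gives
\[
|g''(z)|\lesssim \frac{\|g\|^s_{\B^s_{\varphi}}(1+\varphi'(|z|))}{\tau(z)\,G_0^{\sigma}}.
\]
Bounding $|g'(z)|^{n-1}$ by the same pointwise estimate and using the crucial algebraic identity $m-\sigma n=0$, the powers of $G_0$ cancel exactly and we again obtain
\[
|g(z)|^m|g'(z)|^{n-1}|g''(z)|\lesssim \|g\|^{sn}_{\B^s_{\varphi}}\,\frac{(1+\varphi'(|z|))^n}{\tau(z)}.
\]

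\smallskip
To finish, note that Proposition~\ref{prop:properties:varphi}~\ref{item5:properties:varphi} implies $\tau(r)\varphi'(r)\to\infty$ as $r\to 1^-$, so $1/\tau(z)\lesssim 1+\varphi'(|z|)$ uniformly on $\D$. Absorbing this factor yields \eqref{eqn:estimate:second:derivative} in both cases. The main subtlety is the case split together with the arithmetic identities $sn=m+n$ and $m=\sigma n$, which are exactly what make the two regimes produce the same final bound; the rest is a routine combination of the Lipschitz description of $\B^s_{\varphi}$ and Cauchy estimates on pseudo-hyperbolic disks.
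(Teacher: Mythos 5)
Your proof is correct, but it follows a genuinely different route from the paper's. The paper estimates the quantity by differentiating the product $G=g^m(g')^n$: the term $n\,g^m(g')^{n-1}g''$ is isolated from $G'$, the companion term $m\,g^{m-1}(g')^{n+1}$ is handled with the radicality estimate of Theorem~\ref{prop:radicality:estimateintro}, and the term $|G'|/(\psi')^{n+1}$ is controlled by $\sup_r M_\infty(r,G)/\psi'(r)^n\simeq\|g\|_{\B^s_{\varphi}}^{sn}$ via Lemma~\ref{le:crecimientog-gprima}, whose proof rests on Proposition~\ref{prop:properties:varphi2} and hence on the convexity hypothesis \ref{rapidly:decreasing:condition:e}. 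You instead work entirely pointwise: you split according to whether $|g(z)|^s$ is dominated by $\|g\|^s_{\B^s_{\varphi}}\tau(z)(1+\varphi'(|z|))$, use the Lipschitz description \eqref{eqn:Bqphi:Lipschitz} together with \eqref{eqn:behavior:betaphi:close:points} to control $\sup_{D_\delta(z)}|g|$ (resp.\ to show $|g|\simeq|g(z)|$ on $D_\delta(z)$), and then apply Cauchy estimates on disks of radius $\delta\tau(z)$; the identities $sn=m+n$ and $m=\sigma n$ make both regimes land on the same bound $\|g\|^{sn}_{\B^s_{\varphi}}(1+\varphi')^n/\tau$, which is absorbed using $\tau(1+\varphi')\gtrsim1$ from Proposition~\ref{prop:properties:varphi}~\ref{item5:properties:varphi}. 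This is essentially the strategy of the paper's proof of Theorem~\ref{prop:radicality:estimateintro} pushed one derivative further. I checked the exponent bookkeeping in both cases and the final absorption step, and they are sound; the only hypotheses you need are those guaranteeing \eqref{eqn:Bqphi:Lipschitz}, \eqref{eqn:behavior:betaphi:close:points}, and Proposition~\ref{prop:properties:varphi}, so your argument in fact avoids condition \ref{rapidly:decreasing:condition:e} and is in that respect marginally more general than the paper's, at the cost of a two-case analysis in place of the clean product-rule decomposition.
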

\begin{proof}
Note that $\|g\|^{sn}_{ \B^s_{\varphi}}\simeq\|G\,(\psi')^{-n}\|_{L^{\infty}(\D)}$, where $\psi(r)=r+\varphi(r)$ and $G=g^m(g')^n$. Then $G\in\H(\D)$ and 
$G'=mg^{m-1}(g')^{n+1}+ng^m(g')^{n-1}g''$, so 
\begin{equation}\label{eqn:estimate:second:derivative:1}
\frac{|g|^m|g'|^{n-1}|g''|}{(\psi')^{n+1}}
\le\sigma
\left(\frac{|g|^{m-1}|g'|^{n+1}}{(\psi')^{n+1}}
                                    +\frac{|G'|}{(\psi')^{n+1}}\right)
=\sigma(G_1+G_2).   
\end{equation}
Since 
$G_1=\Bigl(\frac{|g|^{\frac{m}n}|g'|}{\psi'}\Bigr)^{n-\frac{n}m}
\Bigl(\frac{|g'|}{\psi'}\Bigr)^{1+\frac{n}m}
\lesssim\bigl(\frac1{\psi'}|\nabla|g|^s|\bigr)^{n-\frac{n}m}
\Bigl(\frac{|g'|}{\psi'}\Bigr)^{1+\frac{n}m}$,
Theorem~{\ref{prop:radicality:estimateintro}} shows that
\begin{equation}\label{eqn:estimate:second:derivative:2}
\sup_{z\in\D}G_1(z)
\le\|g\|_{ \B^s_{\varphi}}^{s(n-\frac{n}m)}
\|g\|_{\B^1_{\varphi}}^{1+\frac{n}m}
\lesssim\|g\|_{\B^s_{\varphi}}^{sn}
\quad(g\in\H(\D)),
\end{equation}
because $s(n-\frac{n}m)+1+\frac{n}m=sn+1+\frac{n}m(1-s)=sn$. Moreover,
bearing in mind Propositions~\ref{prop:properties:varphi} and~\ref{prop:properties:varphi2}
and applying \eqref{eqn:derivative:lesssim:function} to $g=G$  and $\alpha=n$, we get
\begin{equation}\label{eqn:estimate:second:derivative:3}
\sup_{z\in\D}G_2(z)\lesssim \sup_{z\in\D}\frac{|G(z)|}{\psi'(z)^n}
\simeq \|g\|^{sn}_{ \B^s_{\varphi}}
\qquad(g\in\H(\D)).
\end{equation}
Therefore \eqref{eqn:estimate:second:derivative} directly follows from \eqref{eqn:estimate:second:derivative:1}, \eqref{eqn:estimate:second:derivative:2}, and 
\eqref{eqn:estimate:second:derivative:3}.
\end{proof}

\begin{lemma}
\label{lemma:estimate:gamma,p:alpha,q;p<q}
Let $\om\in\SW$ and $0<p\le q<\infty$. Then
\begin{equation}\label{eqn:estimate:gamma,1:alpha,p}
\|f\|_{A^q_{\om^{\frac{q}2}\tau^{2\left(\frac{q}{p} -1\right)}}}
\lesssim\|f\|_{\Ap2}
\qquad(f\in\H(\D)).
\end{equation}
\end{lemma}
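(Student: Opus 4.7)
The plan is to use the pointwise growth estimate \eqref{eqn:growth:of:Apomegap:functions} to trade a factor of $|f|^{q-p}$ for a pointwise bound, then recognize that the remaining integrand is exactly the one from the $A^p_{\omega_p}$-norm.

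Concretely, I would start from the identity $|f|^q = |f|^{q-p}\,|f|^p$ and apply \eqref{eqn:growth:of:Apomegap:functions} to the factor $|f|^{q-p}$, obtaining
\begin{equation*}
|f(z)|^{q-p}\lesssim \tau(z)^{-\frac{2(q-p)}{p}}\,\omega(z)^{-\frac{q-p}{2}}\,\|f\|_{\Ap2}^{q-p}\qquad(z\in\D).
\end{equation*}
Substituting this into the integrand defining the left-hand side and multiplying out the powers of $\omega$ and $\tau$, I would then observe the algebraic miracle that the exponent of $\tau$ collapses to zero:
\begin{equation*}
2\Bigl(\tfrac{q}{p}-1\Bigr)-\tfrac{2(q-p)}{p}=0,
\end{equation*}
while the exponent of $\omega$ reduces from $q/2$ to $p/2$. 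Hence
\begin{equation*}
|f(z)|^q\,\omega(z)^{q/2}\,\tau(z)^{2(q/p-1)}
\lesssim \|f\|_{\Ap2}^{q-p}\,|f(z)|^p\,\omega(z)^{p/2}\qquad(z\in\D).
\end{equation*}

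Integrating over $\D$ with respect to $dA$ bounds the right-hand side by $\|f\|_{\Ap2}^{q-p}\cdot\|f\|_{\Ap2}^{p}=\|f\|_{\Ap2}^{q}$, and taking $q$-th roots yields \eqref{eqn:estimate:gamma,1:alpha,p}. There is no real obstacle here: the proof is just a standard ``trade an $L^{\infty}$-type estimate for the excess exponent and keep the $L^p$-mass'' argument, and the choice of the weight $\omega^{q/2}\tau^{2(q/p-1)}$ is precisely calibrated to make the powers of $\tau$ cancel. I would only need to be careful that \eqref{eqn:growth:of:Apomegap:functions} is indeed available in the form stated above, which it is because $\omega\in\SW$.
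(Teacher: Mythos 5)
Your proposal is correct and follows exactly the paper's own argument: both apply the pointwise growth estimate \eqref{eqn:growth:of:Apomegap:functions} to the factor $|f|^{q-p}$ in the splitting $|f|^q=|f|^p|f|^{q-p}$, after which the powers of $\tau$ cancel and the power of $\omega$ drops to $p/2$, so integration gives the claim. No issues.
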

\begin{proof}
By \eqref{eqn:growth:of:Apomegap:functions} we have that
\[
|f(z)|^q= |f(z)|^p |f(z)|^{q-p}
\lesssim 
\|f\|_{\Ap2}^{q-p}  
\tau(z)^{2\left(1-\frac{q}{p}\right)}\om(z)^{\frac{p-q}2}|f(z)|^p
\quad(z\in\D),
\]
from which \eqref{eqn:estimate:gamma,1:alpha,p} directly follows.
\end{proof}

Next lemma is an easy application of the maximum modulus principle, so we omit its proof.

\begin{lemma}
Let $\om=e^{-2\varphi}\in\SW$ and
 $N>0$. Then
\begin{equation}\label{eqn:multiply:by:z}
 \sup_{z\in\D}  
 \frac{\om(z)\tau(z)^2|f(z)|}{\left(1+\varphi'(z)\right)^{N}} 
\simeq 
\sup_{z\in\D}  \frac{\om(z)\tau(z)^2|\widetilde{f}(z)|}{\left(1+\varphi'(z)\right)^{N}}\qquad(f\in\H(\D)),
\end{equation}
where $\widetilde{f}(z)=zf(z)$.
\end{lemma}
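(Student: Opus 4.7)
The plan is straightforward. Let $W(z):=\om(z)\tau(z)^2/(1+\varphi'(z))^N$ and call $A$ and $B$ the right-hand and left-hand sides of \eqref{eqn:multiply:by:z}, respectively. The inequality $B\le A$ is trivial from $|\widetilde f(z)|=|z||f(z)|\le|f(z)|$ on $\D$, so all the work is in proving $A\lesssim B$.

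For that direction I would split $\D$ into the annular region $\{|z|\ge 1/2\}$ and the compact disc $\overline{D(0,1/2)}$. On the annulus the factor $|z|$ is bounded below by $1/2$, so $|f(z)|\le 2|\widetilde f(z)|$ and hence $W(z)|f(z)|\le 2B$ immediately. On the compact disc I would exploit two facts: first, that $W$ is continuous and strictly positive on $\overline{D(0,1/2)}$---this is built into the class $\SW$ via conditions \textsf{(a)}--\textsf{(b)}, since the singular behaviour of $\tau$ (vanishing) and of $\varphi'$ (blow-up) is concentrated at the boundary---so $0<c_1\le W\le c_2<\infty$ there and in particular $W(w)\ge c_1$ for $|w|=1/2$; second, that the maximum modulus principle applied to $f\in\H(\D)$ gives $|f(z)|\le\max_{|w|=1/2}|f(w)|=2\max_{|w|=1/2}|\widetilde f(w)|$ for $|z|\le 1/2$. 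Combining these yields
\[
W(z)|f(z)|\le 2c_2\max_{|w|=1/2}|\widetilde f(w)|\le \tfrac{2c_2}{c_1}\max_{|w|=1/2}W(w)|\widetilde f(w)|\le \tfrac{2c_2}{c_1}B
\]
for $|z|\le 1/2$, which combined with the annular estimate gives $A\lesssim B$.

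I do not expect any real obstacle. The whole argument rests on the fact that replacing $f$ by $\widetilde f=zf$ is harmless where $|z|$ is bounded below, and that on a fixed neighbourhood of $0$ the weight $W$ is essentially a constant, so the maximum modulus principle transfers control on the circle $|w|=1/2$ back to the interior for free. The only point that needs verification at all is the positivity of $W$ on $\overline{D(0,1/2)}$, which is immediate.
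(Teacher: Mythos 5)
Your argument is correct and is exactly the proof the paper has in mind: the paper omits it with the remark that the lemma ``is an easy application of the maximum modulus principle,'' which is precisely your treatment of the disc $\overline{D(0,1/2)}$, combined with the trivial bound $|f|\le 2|\widetilde f|$ on $\{|z|\ge 1/2\}$ and the positivity and continuity of $\om\tau^2(1+\varphi')^{-N}$ away from the boundary. The only blemish is the first sentence, where ``right-hand and left-hand'' are swapped relative to how $A$ and $B$ are actually used in the rest of the argument (throughout the computations $A=\sup W|f|$ and $B=\sup W|\widetilde f|$), but the mathematics is consistent once that labelling is fixed.
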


\begin{lemma}
\label{lem:estimate:integrals}
Let $\om=e^{-2\varphi}\in\SW$,  $N\ge 0$, $p>0$ and  $\sigma\in\Q$, $\sigma>0$.
Then there is a constant $C>0$ such that, for any $g\in\H(\D)$ and $a\in\D$, we have  
\begin{equation}
\label{eqn:estimate:integrals}
\int_{\D}|Q_g^{\sigma,1}K^\om_{a,0}|\,|K^\om_a|\, \left(1+\varphi'\right)^{N} \om\,dA
\le C\, \frac{\opnorm{Q_g^{\sigma,1}}_{\Lp2}  \left(1+\varphi'(a)\right)^{N}}{\om(a)\tau(a)^2},
\end{equation}
where $K_{a,0}^\om(z):=z\,K_a^\om(z)$.
\end{lemma}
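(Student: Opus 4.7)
Set $h:=Q_g^{\sigma,1}K^\om_{a,0}$. Since $K^\om_{a,0}(0)=0$, so $K^\om_{a,0}\in\Ap2(0)$, and $|K^\om_{a,0}(z)|\le|K^\om_a(z)|$, the first step is to record the baseline estimate
$$\|h\|_{\Lp2}\le\opnorm{Q_g^{\sigma,1}}_{\Lp2}\,\|K^\om_a\|_{\Ap2}\lesssim\frac{\opnorm{Q_g^{\sigma,1}}_{\Lp2}}{\om(a)^{1/2}\tau(a)^{2-2/p}},$$
which follows directly from \eqref{eqn:estimate:Apomegap:norm:Bergman:kernel}. The integral $I$ in \eqref{eqn:estimate:integrals} then decomposes naturally as the pairing of $|h|\om^{1/2}$ against the kernel factor $|K^\om_a|(1+\varphi')^N\om^{1/2}$, and I would estimate this pairing differently depending on whether $p\ge 1$ or $p<1$, in both cases absorbing the kernel factor via the sharp estimates \eqref{eqn1:le:estimates}--\eqref{eqn2:le:estimates}.

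For $p\ge 1$, the plan is to apply H\"older's inequality with conjugate exponents $(p,p')$, yielding $I\le\|h\|_{\Lp2}\,\|K^\om_a\|_{A^{p'}_{\om^{p'/2}(1+\varphi')^{Np'}}}$. The second factor is handled by \eqref{eqn2:le:estimates} with $\beta=Np'$ and equals $\simeq(1+\varphi'(a))^N/(\om(a)^{1/2}\tau(a)^{2-2/p'})$; since $2-2/p'=2/p$, multiplying by the baseline bound on $\|h\|_{\Lp2}$ collapses the $\tau(a)$ and $\om(a)$ exponents exactly to $\opnorm{Q_g^{\sigma,1}}_{\Lp2}\,(1+\varphi'(a))^N/(\om(a)\tau(a)^2)$, as desired.

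For $0<p<1$ H\"older is unavailable, and I would substitute it with the subharmonicity of $|h|^p$: the identity $\log|h|=\sigma\log|g|+\log|T_g K^\om_{a,0}|$ expresses $\log|h|$ as a sum of subharmonic functions, so $|h|^p=e^{p\log|h|}$ is subharmonic. Running the Pau--Pel\'aez submean-value argument behind \eqref{eqn:growth:of:Apomegap:functions} on admissible discs provides the pointwise bound $|h(z)|\lesssim\tau(z)^{-2/p}\om(z)^{-1/2}\|h\|_{\Lp2}$. Writing $|h|=|h|^p|h|^{1-p}$ inside $I$ and bounding $|h|^{1-p}$ pointwise by the $(1-p)$-th power of this estimate leaves
$$I\lesssim\|h\|_{\Lp2}^{1-p}\int_\D|h|^p\om^{p/2}\cdot\bigl[|K^\om_a|(1+\varphi')^N\om^{1/2}\tau^{-(2/p-2)}\bigr]\,dA.$$
Because $2/p-2\ge0$, the bracketed factor is uniformly $\lesssim(1+\varphi'(a))^N/(\om(a)^{1/2}\tau(a)^{2/p})$ by \eqref{eqn1:le:estimates} with $\beta=2/p-2$. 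Pulling this supremum out of the integral converts the remainder into $\|h\|_{\Lp2}^p$, which combines with the outer $\|h\|_{\Lp2}^{1-p}$ into a single $\|h\|_{\Lp2}$; inserting the baseline estimate one more time then produces the required right-hand side.

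The only delicate step is the pointwise bound on $|h|$ in the sub-$1$ case, since $h$ fails to be holomorphic. Its justification, however, relies solely on the subharmonicity of $|h|^p$ together with the local behavior of $\om$ and $\tau$ on discs $D(z,c\tau(z))$ controlled by Proposition \ref{prop:properties:varphi}, so the standard analytic proof of \eqref{eqn:growth:of:Apomegap:functions} adapts without essential changes. All remaining steps are routine bookkeeping of exponents of $\tau$, $\om$ and $1+\varphi'$.
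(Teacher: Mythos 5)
Your proposal is correct. For $p>1$ it coincides with the paper's argument verbatim: H\"older against the conjugate exponent, \eqref{eqn2:le:estimates} for the dual norm of the kernel, and \eqref{eqn:estimate:Apomegap:norm:Bergman:kernel} for the baseline bound; the exponent bookkeeping ($2-2/p'=2/p$) checks out. For $0<p\le1$ your route differs from the paper's in one genuine respect. The paper first extracts the supremum $M_a$ of $|K^{\om}_a|(1+\varphi')^N\om^{1/2}\tau^{-2(1/p-1)}$ via \eqref{eqn1:le:estimates} and then controls the remaining integral $\int_{\D}|Q_g^{\sigma,1}K^{\om}_{a,0}|\,\om^{1/2}\tau^{2(1/p-1)}\,dA$ by Lemma \ref{lemma:estimate:gamma,p:alpha,q;p<q}; crucially, it sidesteps the non-holomorphy of $Q_g^{\sigma,1}K^{\om}_{a,0}=|g|^{\sigma}T_gK^{\om}_{a,0}$ by rewriting it as $|g^m(T_gK^{\om}_{a,0})^n|^{1/n}$ and applying the embedding to the \emph{holomorphic} function $g^m(T_gK^{\om}_{a,0})^n$ in $A^{1/n}$ of the weight $\om^n\in\SW$ (same $\tau$ up to constants). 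You instead split $|h|=|h|^p|h|^{1-p}$ and invoke a pointwise growth bound $|h(z)|\lesssim\tau(z)^{-2/p}\om(z)^{-1/2}\|h\|_{\Lp2}$ proved by subharmonicity of $|h|^p$; the resulting exponent arithmetic is the same and lands on the right answer. The one step that deserves more care than your phrase ``adapts without essential changes'' suggests is precisely that pointwise bound: since $\tau(1+\varphi')\to\infty$, the weight $\om$ is \emph{not} essentially constant on the discs $D(z,c\tau(z))$, so the submean-value argument behind \eqref{eqn:growth:of:Apomegap:functions} is not bare subharmonicity of $|h|^p$ — it requires writing $\varphi=\Re H+u$ locally with $H$ holomorphic and $u$ bounded, and applying the submean-value inequality to $|he^{-H}|^p$. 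This still works for your $h$, because $\log|he^{-H}|=\sigma\log|g|+\log|T_gK^{\om}_{a,0}|-\Re H$ remains subharmonic, so $|he^{-H}|^p$ is subharmonic and the Pau--Pel\'aez argument goes through; alternatively you could obtain the same pointwise bound with no adaptation at all by applying \eqref{eqn:growth:of:Apomegap:functions} to $g^m(T_gK^{\om}_{a,0})^n\in A^{p/n}_{(\om^n)^{p/(2n)}}$ and taking $n$-th roots, which is exactly the paper's device. With that point made explicit, the proof is complete.
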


\begin{proof}
Denote by $I(a)$ the integral in \eqref{eqn:estimate:integrals} and consider two cases: 
\begin{enumerate}[label={\sf\alph*)},topsep=3pt, 
leftmargin=0pt, itemsep=4pt, wide, listparindent=0pt, itemindent=6pt] 

\item $1<p<\infty$: Let $q$ be the conjugate exponent of $p$. Then H\"{o}lder's inequality,  \eqref{eqn:estimate:Apomegap:norm:Bergman:kernel} and \eqref{eqn2:le:estimates} prove \eqref{eqn:estimate:integrals} in this case:
\begin{align*}
I(a)
&\le 
\|Q_g^{\sigma,1}K^{\om}_{a,0}\|_{\Lp2} 
\|K^\om_a\|_{A^{q}_{\om^{q/2}(1+\varphi')^{Nq}}}
\\
&\le\opnorm{Q_g^{\sigma,1}}_{\Ap2}\|K^\om_{a} \|_{\Ap2}
\|K^\om_a\|_{A^{q}_{\om^{q/2}(1+\varphi')^{Nq}}}
\\
 & \lesssim 
\frac{\opnorm{Q_g^{\sigma,1}}_{\Lp2}   \left(1+\varphi'(a)\right)^{N}}{\om(a)\tau(a)^2}.
\end{align*} 

\item  $0<p\le 1$: Then
\[
I(a)\le  M_a \int_{\D}|Q_g^{\sigma,1}K^\om_{a,0}|\, \om^{\frac12} \tau^{2\left(\frac{1}{p}-1 \right)}\,dA \qquad(a\in\D),
\]
where $M_a:=\sup_{z\in\D}\,|K^\om_a(z)|\,  \left(1+\varphi'(z)\right)^{N}\om(z)^{\frac12}\tau(z)^{-2\left(\frac{1}{p}-1 \right)}$.
Let $\frac{m}{n}$ be the irreducible fraction expression of $\sigma$. 
Then $\om^n=e^{-2n\varphi}\in\SW$ and  $(\Delta(n\varphi))^{-1/2}\simeq\tau$, so
Lemma~\ref{lemma:estimate:gamma,p:alpha,q;p<q}, \eqref{eqn2:le:estimates}, and \eqref{eqn:estimate:Apomegap:norm:Bergman:kernel}
give that
\begin{equation*}\begin{split}
\int_{\D}|Q_g^{\sigma,1}K^\om_{a,0}|\,\om^{1/2} \tau^{2\left(\frac{1}{p}-1 \right)}\,dA
&= \| g^m  (T_g K^\om_{a,0})^n \|^{1/n}_{A^{1/n}_{(\om^n)^{\frac1{2n}}\tau^{2\left(\frac{1}{p}-1\right)}}}
\\ & \lesssim   \| g^m  (T_g K^\om_{a,0})^n \|^{1/n}_{A^{p/n}_{\om^{p/2}}}=\|Q_g^{\sigma,1}K^\om_{a,0} \|_{\Lp2}
\\ & \le  \opnorm{Q_g^{\sigma,1}}_{\Lp2} \om(a)^{-\frac12}\tau^{\frac2p-2}(a),\quad\mbox{for $a\in\D$.}
\end{split}\end{equation*}
Moreover, \eqref{eqn1:le:estimates} implies that
$M_a\lesssim  \om(a)^{-\frac12}\tau^{-\frac2p}(a)  \left(1+\varphi'(a)\right)^{N}$, for $a\in\D$,
and therefore \eqref{eqn:estimate:integrals} also holds in this case.
Hence the proof is complete.\qedhere
\end{enumerate}
\end{proof}

\begin{proof}[{\bf Proof of Proposition \ref{lem:sharp:lower:estimate:norm:word:3}}]
Fix $a\in\D$. Let  $K_{a,0}^\om$ be defined as in the statement of Lemma~\ref{lem:estimate:integrals}.
Let $\sigma=\frac{m}n$ be the irreducible fraction expression of $\sigma$. Take $N=3n$ and observe that the estimate \eqref{eqn:estimate:Apomegap:norm:Bergman:kernel} implies that
\[
K^\om_a(a)=\|K_a^{\omega}\|^2_{A^2_\om}\simeq \frac1{\om(a)\tau^2(a)}
\qquad(a\in\D),
\] 
so
\begin{equation*}
\|g\|^{sN}_{ \B^s_{\varphi}}
\simeq\sup_{a\in\D}\frac{|g(a)|^{N\sigma}|g'(a)|^N} 
{(1+ \varphi'(a))^N}
\simeq\sup_{a\in\D}\frac{\om(a) \tau^{2}(a) |G_a(a)|}{(1+ \varphi'(a))^N}
\,\,(a\in\D,\,g\in\H(\overline{\D}),
\end{equation*}
where $G_a:=g^{N\sigma}(g')^NK_a^\om\in\H(\D)$.
 Therefore~{\eqref{eqn:multiply:by:z}} gives that
\begin{equation}
\label{eqn:lem3:sharp:lower:estimate:norm:word:1}
\|g\|^{sN}_{ \B^s_{\varphi}}
\simeq
\sup_{a\in\D} 
 \frac{\om(a)\tau(a)^2|\widetilde{G}_a(a)|}{(1+\varphi'(a))^N}
\qquad(a\in\D,\,g\in\H(\overline{\D})),
\end{equation}
where $\widetilde{G}_a(z):=zG_a(z)=g(z)^{N\sigma}g'(z)^NK^\om_{a,0}(z)$.
 Since $K^\om_a$ is the reproducing kernel for $A^2_\om$ at the point $a$ and $\widetilde{G}_a\in\H(\overline{\D})$, we have
\begin{equation*}
\label{eqn:lem3:sharp:lower:estimate:norm:word:2}
\widetilde{G}_a(a)
=\int_{\D}\widetilde{G}_a\,\overline{K^\om_a}\,\omega\,dA
=\lim_{r\to1^{-}}
   \int_{D_r}\widetilde{G}_a\,\overline{K^\om_a}\,\omega\,dA,
\end{equation*}
where $D_r=D(0,r)$.
Note that
$\widetilde{G}_a\,\overline{K^\om_a}\,\omega=F_1\,
\frac{\partial F_2}{\partial z}
=\tfrac{\partial\,}{\partial z}(F_1F_2)-F_2\tfrac{\partial F_1}{\partial z}$, where
$F_1:=g^{N\sigma}(g')^{N-1}\om$ and 
$F_2:=(T_g  K^\om_{a,0})\overline{K^\om_a}$ are $C^1$ functions on $\D$.
Then, since $\lim_{|z|\to1^{-}}F_1(z)F_2(z)=0$, Stokes' theorem shows that
\begin{equation*}
\widetilde{G}_a(a)
=\lim_{r\to1^{-}}
   \int_{D_r}\widetilde{G}_a\,\overline{K^\om_a}\,\omega\,dA
=-\lim_{r\to1^{-}}
\int_{D_r}F_2\tfrac{\partial F_1}{\partial z}\,dA.
\end{equation*}
Since
\begin{align*}
\biggl|\int_{D_r}F_2\tfrac{\partial F_1}{\partial z}\,dA\biggr|
&\le N\sigma\int_{\D}|g|^{N\sigma-1}|g'|^N
                               |T_gK^\om_{a,0}||K^\om_a|\,\omega\,dA\\
&\,\,\,\,\,\,\,+(N-1)\int_{\D}|g|^{N\sigma}|g'|^{N-2}|g''|
                             |T_gK^\om_{a,0}|\,|K^\om_a|\,\omega\,dA\\
&\,\,\,\,\,\,\,+\int_{\D}|g|^{N\sigma}|g'|^{N-1}
                 |T_gK^\om_{a,0}|\,|K^\om_a|\,
                 |\tfrac{\partial\omega}{\partial z}|\,dA\\
&= N\sigma\,A_1+(N-1)\,A_2+A_3,
\end{align*}
in order to complete the proof it is enough to show that
\begin{equation}
\label{eqn:lem3:sharp:lower:estimate:norm:word:4}
A_j
\lesssim \|g\|_{ \B^s_{\varphi}}^{s(N-1)}
\int_{\D}|Q_g^{\sigma,1}K^\om_{a,0}|\,|K^\om_a|\, (1+\varphi')^{N} \om\,dA
\qquad(a\in\D),
\end{equation}
for $j=1,2,3$.
Note that \eqref{eqn:lem3:sharp:lower:estimate:norm:word:1}-\eqref{eqn:lem3:sharp:lower:estimate:norm:word:4} together with \eqref{eqn:estimate:integrals} will show \eqref{eqn:lem3:sharp:lower:estimate:norm:word}, which will end the proof of the proposition.

We begin by proving \eqref{eqn:lem3:sharp:lower:estimate:norm:word:4} for $j=1$.
 Indeed, 
\begin{equation*}
A_1\le
\int_{\D}\biggl(\frac{|g|^{\sigma}|g'|}{1+\varphi'}\biggr)^{\alpha}
              \biggl(\frac{|g'|}{1+\varphi'}\biggr)^{\beta}
              |Q_g^{\sigma,1}K^\om_{a,0}|\,|K^\om_{a}|\, (1+\varphi')^{\gamma}\,\omega \,dA, 
\end{equation*}
where $\alpha,\beta,\gamma$ are the solutions of the linear system
\[
\left\{
\begin{array}{lcl}
N\sigma-1 & = & \sigma\alpha+\sigma\\
N              & = & \alpha+\beta\\
0              & = & \alpha+\beta-\gamma,
\end{array}
\right.
\]
namely, $\alpha=N-\frac1{\sigma}-1$, $\beta=\frac1{\sigma}+1$, and $\gamma=N$. Therefore, taking into account  Theorem~{\ref{prop:radicality:estimateintro}}, we have that 
 \begin{align*}
 	A_1
 	&\lesssim \|g\|_{ \B^s_{\varphi}}^{s\alpha}\,\|g\|_{ \B^1_\varphi}^{\beta}
 	\int_{\D}
 	|Q_g^{\sigma,1}K^\om_{a,0}|\,|K^\om_{a}|\, (1+\varphi')^{N} \omega \,dA\\
 	&\lesssim\|g\|_{ \B^s_{\varphi}}^{s\alpha+\beta}
 	\int_{\D}
 	|Q_g^{\sigma,1}K^\om_{a,0}|\,|K^\om_{a}|\, (1+\varphi')^{N} \omega \,dA,
 \end{align*}
which shows that \eqref{eqn:lem3:sharp:lower:estimate:norm:word:4}
holds for $j=1$, because $s\alpha+\beta=s(N-1)$.
 Since 
\begin{equation*}
 A_2\le
 \int_{\D}\biggl(\frac{|g|^m|g'|^{n-1}|g''|}{(1+\varphi')^{n+1}} \biggr)\biggl(\frac{|g|^{\sigma}|g'|}{1+\varphi'} \biggr)^{\alpha}
 |Q_g^{\sigma,1}K^\om_{a,0}|\,|K^\om_{a}|\, (1+\varphi')^{N} \omega \,dA,
 \end{equation*}
 where $\alpha=N-n-1$, we may apply \eqref{eqn:estimate:second:derivative} to get \eqref{eqn:lem3:sharp:lower:estimate:norm:word:4} for $j=2$: 
\begin{align*}
 A_2
 &\lesssim \|g\|_{ \B^s_{\varphi}}^{s(n+\alpha)}
 \int_{\D}
 |Q_g^{\sigma,1}K^\om_{a,0}|\,|K^\om_{a}|\,(1+\varphi')^{N} \omega \,dA\\
 &=\|g\|_{\B^s_{\varphi}}^{s(N-1)}
 \int_{\D}
|Q_g^{\sigma,1}K^\om_{a,0}|\,|K^\om_{a}|\,(1+\varphi')^{N} \omega \,dA.
 \end{align*}
Finally, we check \eqref{eqn:lem3:sharp:lower:estimate:norm:word:4} for $j=3$:
Just note that $\left|\frac{\partial\omega}{\partial z}\right|=2\om\varphi'$, so
\begin{align*}
A_3
&\le\int_{\D}
       |g|^{N\sigma}|g'|^{N-1}|T_gK^\om_{a,0}|\,|K^\om_a|\,
                                    \bigl|\tfrac{\partial\omega}{\partial z}\bigr|\,dA
\\ 
&=\int_{\D}
       |g|^{N\sigma}|g'|^{N-1}|T_g  K^\om_{a,0}|\,|K^\om_a|\,\om \,\varphi'dA
\\ 
&\le\int_{\D} \biggl(\frac{|g|^{\sigma}|g'|}{1+\varphi'}\biggr)^{N-1}  |Q_g^{\sigma,1}K^\om_{a,0}|\,|K^\om_{a}|\, (1+\varphi')^{N} \omega \,dA,
\\ &\le \|g\|_{ \B^s_{\varphi}}^{s(N-1)}\int_{\D}|Q_g^{\sigma,1}K^\om_{a,0}|\,|K^\om_{a}|\,(1+\varphi')^{N} \omega \,dA.\qedhere
\end{align*}
\end{proof}
The induction step for the proof of Proposition \ref{prop:sharp:lower:estimate:norm:word:1} is done in the following result.

\begin{proposition}\label{lem:sharp:lower:estimate:norm:word:4}
Let $\omega\in \SW$ and $0<p<\infty$.
Let  $\ell\in\N$,  $\sigma\in\Q$, $\sigma>0$, and $s=\sigma+1$. Assume that
\begin{equation}
\label{eqn:lem4:sharp:lower:estimate:norm:word1}
\|g\|_{\B^s_{\varphi}}^{s\ell}\lesssim
 \opnorm{Q^{\sigma,\ell}_g}_{\Lp2}  
 \qquad(g\in\H(\overline{\D})).
\end{equation}
Then
\begin{equation}
\label{eqn:lem4:sharp:lower:estimate:norm:word2}
\|g\|_{\B^s_{\varphi}}^{s(\ell+1)}\lesssim
\opnorm{Q_g^{\sigma,\ell+1}}_{\Lp2} 
\qquad(g\in\H(\overline{\D})).
\end{equation}
\end{proposition}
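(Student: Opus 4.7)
The plan is to adapt the Bergman-kernel / Stokes' theorem argument used in Proposition \ref{lem:sharp:lower:estimate:norm:word:3} by iterating the integration by parts step $\ell+1$ times, so that $T_g^{\ell+1}$ appears explicitly in the resulting integrand and can be paired with $|g|^{\sigma(\ell+1)}$ to form $Q_g^{\sigma,\ell+1}$. The induction hypothesis will control the branching terms that arise at every intermediate step.

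By Proposition \ref{prop:norm:dilations} we may assume $g\in\H(\overline{\D})$. Fix $a\in\D$ and choose a large integer $N=N(\ell,\sigma)$ (to be specified); set $K^\omega_{a,0}(z)=zK_a^\omega(z)$, write $T^{(j)}_g:=T_g^jK^\omega_{a,0}$, and observe the key identity $(T^{(j+1)}_g)'(z)=g'(z)T^{(j)}_g(z)$. With $\widetilde{G}_a=g^{N\sigma}(g')^NK^\omega_{a,0}\in\H(\overline{\D})$, exactly as in \eqref{eqn:lem3:sharp:lower:estimate:norm:word:1},
\[
\|g\|_{\B^s_\varphi}^{sN}\simeq\sup_{a\in\D}\frac{\omega(a)\tau(a)^2|\widetilde{G}_a(a)|}{(1+\varphi'(a))^N},
\]
and $\widetilde{G}_a(a)=\lim_{r\to1^-}\int_{D_r}\widetilde{G}_a\,\overline{K^\omega_a}\,\omega\,dA$ by the reproducing property.

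I would then iterate the Stokes' procedure of the base case. At step $1$, write $\widetilde{G}_a\overline{K^\omega_a}\omega=g^{N\sigma}(g')^{N-1}\omega\cdot(T^{(1)}_g)'\overline{K^\omega_a}$ and integrate by parts as in Proposition \ref{lem:sharp:lower:estimate:norm:word:3}, splitting $\partial_z[g^{N\sigma}(g')^{N-1}\omega]$ into three terms (a ``main'' term carrying an extra $g'$, a ``$g''$'' branch, and a ``$\varphi'$'' branch). The main term can be rewritten as $N\sigma\,g^{N\sigma-1}(g')^{N-1}\omega\,(T^{(2)}_g)'$ and integrated by parts again; iterating $\ell+1$ times, the final main term involves $T^{(\ell+1)}_g$, while the branches at step $j\in\{1,\ldots,\ell+1\}$ are paired with $T^{(j)}_g$. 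In every integral factor out $|Q_g^{\sigma,j}K^\omega_{a,0}|=|g|^{j\sigma}|T^{(j)}_g|$ together with $(1+\varphi')^N\omega|K^\omega_a|$; the remaining coefficient can be bounded using Theorem \ref{prop:radicality:estimateintro}, the second-derivative estimate \eqref{eqn:estimate:second:derivative}, and the factorisation scheme of $A_1$--$A_3$ in the proof of the base case. A direct computation (analogous to the one yielding $s(N-1)$ there) shows that the coefficient in front of $|Q_g^{\sigma,j}K^\omega_{a,0}|$ is bounded by $\|g\|_{\B^s_\varphi}^{s(N-j)}$. An analog of Lemma \ref{lem:estimate:integrals} with $Q_g^{\sigma,j}$ replacing $Q_g^{\sigma,1}$ (obtained by the same $L^p$-duality and kernel estimates used there) then yields
\[
\int_\D|Q_g^{\sigma,j}K^\omega_{a,0}||K^\omega_a|(1+\varphi')^N\omega\,dA\lesssim\opnorm{Q_g^{\sigma,j}}_{\Lp2}\,\frac{(1+\varphi'(a))^N}{\omega(a)\tau(a)^2},\qquad 1\le j\le\ell+1.
\]

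Collecting everything gives
\[
\|g\|_{\B^s_\varphi}^{sN}\lesssim\sum_{j=1}^{\ell+1}\|g\|_{\B^s_\varphi}^{s(N-j)}\opnorm{Q_g^{\sigma,j}}_{\Lp2}.
\]
Dividing by $\|g\|_{\B^s_\varphi}^{s(N-\ell-1)}$ and invoking Proposition \ref{prop:upper:estimate:norm:Q} to bound $\opnorm{Q_g^{\sigma,j}}_{\Lp2}\lesssim\|g\|_{\B^s_\varphi}^{sj}$ for $j\le\ell$, a Young-type inequality applied to each intermediate term $\|g\|_{\B^s_\varphi}^{s(\ell+1-j)}\opnorm{Q_g^{\sigma,j}}_{\Lp2}$ (splitting the $\B^s_\varphi$-factor into a power less than $1$ of $\|g\|_{\B^s_\varphi}^{s(\ell+1)}$ and a power of $\opnorm{Q_g^{\sigma,\ell+1}}_{\Lp2}$ supplied by the induction hypothesis at level $\ell$) produces a bootstrap inequality of the form
\[
\|g\|_{\B^s_\varphi}^{s(\ell+1)}\lesssim\opnorm{Q_g^{\sigma,\ell+1}}_{\Lp2}+\bigl(\|g\|_{\B^s_\varphi}^{s(\ell+1)}\bigr)^{1-\theta}\opnorm{Q_g^{\sigma,\ell+1}}_{\Lp2}^{\theta}
\]
for some $\theta\in(0,1)$, which closes to \eqref{eqn:lem4:sharp:lower:estimate:norm:word2} exactly as in the final step of Proposition \ref{lem:sharp:lower:estimate:norm:word:3}.

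The main obstacle is the combinatorial bookkeeping of the $\ell+1$ iterations of Stokes' theorem and of the many branching terms they produce, together with verifying that the correct power $\|g\|_{\B^s_\varphi}^{s(N-j)}$ is obtained after factoring out $|Q_g^{\sigma,j}K^\omega_{a,0}|$ at each level. The vanishing of the boundary integrals when $r\to1^-$ is secured by the assumption $g\in\H(\overline{\D})$, which guarantees that every $T^{(j)}_g$ and every derivative of $g$ extends continuously to $\overline{\D}$.
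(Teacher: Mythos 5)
Your plan diverges substantially from the paper's, and it contains a genuine gap at the point where you try to close the bootstrap. The paper does \emph{not} iterate the Bergman-kernel/Stokes argument: that machinery is used only for the base case $\ell=1$. For the induction step the paper works with an arbitrary test function $f\in\Ap2(0)$, writes $\|Q_g^{\sigma,\ell}f\|_{\Lp2}^N=\|g^{N\sigma\ell}(T_g^{\ell}f)^N\|_{A^{p/N}_{\om_p}}$, applies the Littlewood--Paley formula \eqref{eq:LP2}, and exploits the fact that \emph{every} term of $\bigl(g^{N\sigma\ell}(T_g^{\ell}f)^N\bigr)'$ contains a factor $g'\cdot T_g^{\ell}f=(T_g^{\ell+1}f)'$. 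After a second application of \eqref{eq:LP2} and H\"older, every resulting term carries exactly one factor $\|Q_g^{\sigma,\ell+1}f\|_{\Lp2}$, yielding $\opnorm{Q^{\sigma,\ell}_g}_{\Lp2}^N\lesssim\opnorm{Q^{\sigma,\ell+1}_g}_{\Lp2}\,\|g\|_{\B^s_{\varphi}}^{s\ell(N-\frac{\ell+1}{\ell})}$; the induction hypothesis is then used only on the left-hand side to replace $\opnorm{Q^{\sigma,\ell}_g}_{\Lp2}^N$ by $\|g\|_{\B^s_{\varphi}}^{s\ell N}$, and dividing finishes the proof.

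The gap in your version is structural. In your iterated-Stokes scheme the branch terms produced at step $j$ (the $g''$-branch, the $\varphi'$-branch, and the lower-power-of-$g$ branch) are paired with $T_g^{j}K^{\om}_{a,0}$ and hence yield factors $\opnorm{Q_g^{\sigma,j}}_{\Lp2}$ with $j\le\ell$, \emph{not} $\opnorm{Q_g^{\sigma,\ell+1}}_{\Lp2}$. Your final inequality
$\|g\|_{\B^s_\varphi}^{sN}\lesssim\sum_{j=1}^{\ell+1}\|g\|_{\B^s_\varphi}^{s(N-j)}\opnorm{Q_g^{\sigma,j}}_{\Lp2}$
is then vacuous: by Proposition \ref{prop:upper:estimate:norm:Q} and the induction hypothesis one has $\|g\|_{\B^s_\varphi}^{s(N-j)}\opnorm{Q_g^{\sigma,j}}_{\Lp2}\simeq\|g\|_{\B^s_\varphi}^{sN}$ for every $j\le\ell$, so after dividing by $\|g\|_{\B^s_\varphi}^{s(N-\ell-1)}$ you are left with $\|g\|_{\B^s_\varphi}^{s(\ell+1)}\lesssim\opnorm{Q_g^{\sigma,\ell+1}}_{\Lp2}+C\,\|g\|_{\B^s_\varphi}^{s(\ell+1)}$ with a constant $C$ you do not control, and the second term cannot be absorbed. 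The ``Young-type inequality'' you invoke would require trading a positive power of $\|g\|_{\B^s_\varphi}$ for a positive power of $\opnorm{Q_g^{\sigma,\ell+1}}_{\Lp2}$, but the induction hypothesis \eqref{eqn:lem4:sharp:lower:estimate:norm:word1} only bounds $\|g\|_{\B^s_\varphi}^{s\ell}$ from above by $\opnorm{Q_g^{\sigma,\ell}}_{\Lp2}$ --- it says nothing about level $\ell+1$, which is precisely what you are trying to prove. To make any kernel-based induction work you would have to guarantee that every single term after all integrations by parts contains a factor $T_g^{\ell+1}K^{\om}_{a,0}$, which your branching scheme does not do; the paper's test-function argument achieves exactly this and is the missing idea.
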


\begin{proof}
Let $N=kn$, with $k\in\N$ large enough. Let $f\in \Ap2(0)$ with $\|f\|_{\Ap2}=1$. Then
\begin{equation*}
\|Q_g^{\sigma,\ell}f\|_{\Lp2}^N=    
\||g|^{\sigma\ell}\,T_g^{\ell}f\|_{\Lp2}^N=
\|g^{N\sigma\ell}\,(T_g^{\ell}f)^N\|_{A^{\frac{p}N}_{\om_p}}.
\end{equation*}
Now 
\begin{align*}
\bigl(g^{N\sigma\ell}\,(T_g^{\ell}f)^N\bigr)'  
&=N\sigma\ell\, g' g^{N\sigma\ell-1}(T_g^{\ell}f)^N
+Ng^{N\sigma\ell} g' (T_g^{\ell}f)^{N-1}\, T_g^{\ell-1}f\\
&=N\sigma\ell\,(T_g^{\ell+1}f)' g^{N\sigma\ell-1}(T_g^{\ell}f)^{N-1}\\
&\,\,\,\,+N(T_g^{\ell+1}f)' g^{N\sigma\ell}  (T_g^{\ell}f)^{N-2}\, T_g^{\ell-1}f,
\end{align*}
so $\|Q_g^{\sigma,\ell}f\|_{\Lp2}^N \lesssim A+B$, where 
\begin{align*}
A&=\|(T_g^{\ell+1}f)' g^{N\sigma\ell-1}(T_g^{\ell}f)^{N-1}    \|_{A^{\frac{p}N}_{W_{p,N}}},\\
B&=\|(T_g^{\ell+1}f)' g^{N\sigma\ell}  (T_g^{\ell}f)^{N-2}\, T_g^{\ell-1}f   \|_{A^{\frac{p}N}_{W_{p,N}}},
\end{align*}
and $W_{p,N}=\om^{p/2}\left(1+\varphi'\right)^{-\frac{p}{N}}$.
We will show that 
\begin{equation}
\label{eqn:lem4:sharp:lower:estimate:norm:word3}
 A+B\lesssim \opnorm{Q_g^{\sigma,\ell+1}}_{\Lp2}\,
  \|g\|_{\B^s_{\varphi}}^{s\ell(N-\frac{\ell+1}{\ell})}. 
\end{equation}
\noindent{\sf Estimate of $A$:}
\begin{align*}
A
&\lesssim\|\bigl(T_g^{\ell+1}f g^{N\sigma\ell-1}(T_g^{\ell}f)^{N-1}    \bigr)'\|_{ A^{\frac{p}N}_{W_{p,N}} } 
  +\|T_g^{\ell+1}f \bigl(g^{N\sigma\ell-1}(T_g^{\ell}f)^{N-1}    \bigr)'\|_{A^{\frac{p}N}_{W_{p,N}} } \\
&\lesssim\|T_g^{\ell+1}f g^{N\sigma\ell-1}(T_g^{\ell}f)^{N-1}    \|_{A^{\frac{p}N}_{\om_p}}  
  +\|(T_g^{\ell+1}f) g'g^{N\sigma\ell-2}(T_g^{\ell}f)^{N-1}    \|_{A^{\frac{p}N}_{W_{p,N}}}  \\
&\,\,\,  +\|T_g^{\ell+1}f g'g^{N\sigma\ell-1}(T_g^{\ell}f)^{N-2}T_g^{\ell-1}f    \|_{A^{\frac{p}N}_{W_{p,N}}}=A_1+A_2+A_3.
\end{align*}
{\sf Estimate of $A_1$:}
\begin{align*}
|T_g^{\ell+1}f| |g|^{N\sigma\ell-1}|T_g^{\ell}f|^{N-1}
&=\bigl(|g|^{\sigma(\ell+1)}|T_g^{\ell+1}f|\bigr)\,
\bigl(|g|^{\sigma\ell}|T_g^{\ell}f|\bigr)^a\,|T_g^{\ell}f|^b
\\
&=|Q_g^{\sigma,\ell+1}f|\,|Q_g^{\sigma,\ell}f|^a|T_g^{\ell}f|^b,
\end{align*}
where $a$ and $b$ are the solutions of the system
\[
\left\{
\begin{array}{lcl}
N\sigma\ell-1 & = & \sigma(\ell+1)+a\sigma\ell\\
N-1                & = & a+b,
\end{array}
\right.
\]
namely, $a=N-1-\frac{s}{\sigma\ell}$ and $b=\frac{s}{\sigma\ell}$. Since $1+a+b=N$, we may apply H\"{o}lder's inequality with exponents $p_1=N$, $p_2=\frac{N}a$ and $p_3=\frac{N}b$  and Theorem~{\ref{prop:radicality:estimateintro}}  to get 
\begin{align*}
 A_1
 &\le \|Q_g^{\sigma,\ell+1}f\|_{\Lp2}\, 
      \|Q_g^{\sigma,\ell}f\|^a_{\Lp2}\,
      \|T_g^{\ell}f\|^b_{\Ap2} \
\\&\le \opnorm{Q_g^{\sigma,\ell+1}}_{\Lp2}\, 
      \opnorm{Q_g^{\sigma,\ell}}^a_{\Lp2}\,
      \|T_g^{\ell}\|^b_{\Ap2}\\
&\lesssim\opnorm{Q_g^{\sigma,\ell+1}}_{\Lp2}\,
  \|g\|_{\B^s_{\varphi}}^{s\ell a}\,\|g\|_{\B^1_{\varphi}}^{\ell b}
  \lesssim
  \opnorm{Q_g^{\sigma,\ell+1}}_{\Lp2}\,
 \|g\|_{\B^s_{\varphi}}^{s\ell a+\ell b},      
\end{align*}
where we have also  used the estimate
\eqref{eqn:upper:estimate:norm:Q}.
Since 
\begin{align}
\label{eqn:lem4:sharp:lower:estimate:norm:word4}
s\ell a+\ell b
&=s\ell(N-1)-\tfrac{s^2}{\sigma}+\tfrac{s}{\sigma}=s\ell(N-1)+(1-s)\tfrac{s}{\sigma}\\
\nonumber
&=s\ell(N-1)-s
=s\ell(N-\tfrac{\ell+1}{\ell}),
\end{align}
we deduce that
\begin{equation}
\label{eqn:lem4:sharp:lower:estimate:norm:word5}
A_1\lesssim\opnorm{Q_g^{\sigma,\ell+1}}_{\Lp2}\,
  \|g\|_{\B^s_{\varphi}}^{s\ell(N-\frac{\ell+1}{\ell})}.
\end{equation}

\noindent{\sf Estimate of $A_2$:}
\begin{align*}
&|T_g^{\ell+1}f| |g'| |g|^{N\sigma\ell-2}|T_g^{\ell}f|^{N-1}\left(1+\varphi' \right)^{-1}
\\&=(|g'|\left(1+\varphi' \right)^{-1})\,\bigl(|g|^{\sigma(\ell+1)}|T_g^{\ell+1}f|\bigr)\,
\bigl(|g|^{\sigma\ell}|T_g^{\ell}f|\bigr)^a\,|T_g^{\ell}f|^b
\\
&=(|g'|\left(1+\varphi' \right)^{-1})\,|Q_g^{\sigma,\ell+1}f|\,|Q_g^{\sigma,\ell}f|^a|T_g^{\ell}f|^b,
\end{align*}
where  $a=N-1-\frac{s+1}{\sigma\ell}$ and $b=\frac{s+1}{\sigma\ell}$. Since $1+a+b=N$, we may apply H\"{o}lder's inequality with exponents $p_1=N$, $p_2=\frac{N}a$ and $p_3=\frac{N}b$ and Theorem~{\ref{prop:radicality:estimateintro}}  to get 

\begin{align}
\label{eqn:lem4:sharp:lower:estimate:norm:word6}
 A_2
 &\lesssim\|g\|_{\B^1_{\varphi}}\,
   \|Q_g^{\sigma,\ell+1}f\|_{\Lp2}\,
      \|Q_g^{\sigma,\ell}f\|^a_{\Lp2}\,
      \|T_g^{\ell}f\|^b_{\Ap2} \\
\nonumber
&\lesssim \|g\|_{\B^s_{\varphi}}\,
\opnorm{Q_g^{\sigma,\ell+1}}_{\Lp2}\, 
\opnorm{Q_g^{\sigma,\ell}}^a_{\Lp2}\,
      \|T_g^{\ell}\|^b_{\Ap2}\\
\nonumber      
&\lesssim
\opnorm{Q_g^{\sigma,\ell+1}}_{\Lp2}\, 
\|g\|_{\B^s_{\varphi}}^{s\ell a+\ell b+1}
=\opnorm{Q_g^{\sigma,\ell+1}}_{\Lp2}\,
\|g\|_{\B^s_{\varphi}}^{s\ell(N-\frac{\ell+1}{\ell})},
\end{align}
where we have also used
\eqref{eqn:upper:estimate:norm:Q} and 
$s\ell a+\ell b+1=s\ell(N-\frac{\ell+1}{\ell})$.
\vspace*{6pt}

\noindent{\sf Estimate of $A_3$:}
\begin{equation*}\begin{split}
&|T_g^{\ell+1}f| |g'| |g|^{N\sigma\ell-1}|T_g^{\ell}f|^{N-2}|T_g^{\ell-1}f|\left(1+\varphi' \right)^{-1}
\\ &=(|g'|\left(1+\varphi' \right)^{-1})\,|Q_g^{\sigma,\ell+1}f|\,|Q_g^{\sigma,\ell}f|^a|Q_g^{\sigma,\ell-1}f||T_g^{\ell}f|^b,
\end{split}\end{equation*}
where  $a=N-2-\frac1{\sigma\ell}$ and $b=\frac1{\sigma\ell}$. Since $1+a+1+b=N$, we may apply H\"{o}lder's inequality with exponents $p_1=N$, $p_2=\frac{N}a$, $p_3=N$ and $p_4=\frac{N}b$   and Theorem~{\ref{prop:radicality:estimateintro}} to get 
\begin{align}
\label{eqn:lem4:sharp:lower:estimate:norm:word7}
 A_3
 &\le \|g\|_{\B^1_{\varphi}}\,
   \|Q_g^{\sigma,\ell+1}f\|_{\Lp2}\,
      \|Q_g^{\sigma,\ell}f\|^a_{\Lp2}\,
      \|Q_g^{\sigma,\ell-1}f\|_{\Lp2}\,
      \|T_g^{\ell}f\|^b_{\Ap2} \\
\nonumber
&\lesssim
\opnorm{Q_g^{\sigma,\ell+1}}_{\Lp2}\, 
\|g\|_{\B^s_{\varphi}}^{s\ell a+s(\ell-1)+\ell b+1} 
=\opnorm{Q_g^{\sigma,\ell+1}}_{\Lp2}\,
 \|g\|_{\B^s_{\varphi}}^{s\ell(N-\frac{\ell+1}{\ell})} ,
\end{align}
where we have also used 
\eqref{eqn:upper:estimate:norm:Q} and 
$s\ell a+s(\ell-1)+\ell b+1=s\ell(N-\frac{\ell+1}{\ell})$.\vspace*{6pt}

\noindent{\sf Estimate of $B$:}
\begin{align*}
B
&\lesssim\|\bigl((T_g^{\ell+1}f)\, g^{N\sigma\ell}\,(T_g^{\ell}f)^{N-2}\, (T_g^{\ell-1}f)   \bigr)'\|_{A^{\frac{p}N}_{W_{p,N} }} \\  
&\,\,\,\,  +\|T_g^{\ell+1}f\, \bigl(g^{N\sigma\ell}\,(T_g^{\ell}f)^{N-2}\,T_g^{\ell-1}f   \bigr)'\|_{A^{\frac{p}N}_{W_{p,N}}}  \\
&\lesssim\|(T_g^{\ell+1}f)\, g^{N\sigma\ell}\,(T_g^{\ell}f)^{N-2}\,T_g^{\ell-1}f   \|_{A^{\frac{p}N}_{\om_p}} \\ 
&\,\,\,\,   +\|(T_g^{\ell+1}f)\, g'g^{N\sigma\ell-1}(T_g^{\ell}f)^{N-2}\, T_g^{\ell-1}f   \|_{A^{\frac{p}N}_{W_{p,N}}}   \\
&\,\,\,  +\|(T_g^{\ell+1}f)\, g^{N\sigma\ell}g'(T_g^{\ell}f)^{N-3}(T_g^{\ell-1}f)^2   \|_{A^{\frac{p}N}_{W_{p,N}}} \\
&\,\,\,  +\|(T_g^{\ell+1}f)\, g^{N\sigma\ell}(T_g^{\ell}f)^{N-2}(T_g^{\ell-1}f)'   \|_{A^{\frac{p}N}_{W_{p,N}}} 
=B_1+A_3+B_2+B_3.
\end{align*}

{\sf Estimate of $B_1$:}
\begin{equation*}
|T_g^{\ell+1}f|\, |g|^{N\sigma\ell}\,|T_g^{\ell}f|^{N-2}\,|T_g^{\ell-1}f|
=|Q_g^{\sigma,\ell+1}f|\,
|Q_g^{\sigma,\ell}f|^{N-2}\,|Q_g^{\sigma,\ell-1}f|
\end{equation*}
By applying H\"{o}lder's inequality with exponents $p_1=N$, $p_2=\frac{N}{N-2}$ and $p_3=N$, and using estimate \eqref{eqn:upper:estimate:norm:Q}, we obtain that
\begin{align*}
 B_1
 &\le \|Q_g^{\sigma,\ell+1}f\|_{\Lp2}\, 
      \|Q_g^{\sigma,\ell}f\|^{N-2}_{\Lp2}\,
      \|Q_g^{\sigma,\ell-1}f\|_{\Lp2} \\
      \nonumber
&\le\opnorm{Q_g^{\sigma,\ell+1}}_{\Lp2}\, 
      \opnorm{Q_g^{\sigma,\ell}}^{N-2}_{\Lp2}\,
      \opnorm{Q_g^{\sigma,\ell-1}}_{\Lp2} \\
      \nonumber
&\lesssim  \opnorm{Q_g^{\sigma,\ell+1}}_{\Lp2}\, 
         \|g\|_{\B^s_{\varphi}}^{s\ell(N-\frac{\ell+1}{\ell})}.
\end{align*}

\noindent{\sf Estimate of $B_2$:}
\begin{equation*}\begin{split}
&|T_g^{\ell+1}f|  |g|^{N\sigma\ell}|g'||T_g^{\ell}f|^{N-3}|T_g^{\ell-1}f|^2\left(1+\varphi' \right)^{-1}
\\ &=(|g|^{\sigma}|g'|\left(1+\varphi' \right)^{-1})\,|Q_g^{\sigma,\ell+1}f|\,|Q_g^{\sigma,\ell}f|^{N-3}\,|Q_g^{\sigma,\ell-1}f|^2.
\end{split}\end{equation*}
We apply H\"{o}lder's inequality with exponents $p_1=N$, $p_2=\frac{N}{N-3}$ and $p_3=\frac{N}2$ and use \eqref{eqn:upper:estimate:norm:Q}  to get 
\begin{align}
\label{eqn:lem4:sharp:lower:estimate:norm:word10}
 B_2
&\lesssim \|g\|_{\B^s_{\varphi}}^s\,
\opnorm{Q_g^{\sigma,\ell+1}}_{\Lp2}\, 
      \opnorm{Q_g^{\sigma,\ell}}^{N-3}_{\Lp2}\,
      \opnorm{Q_g^{\sigma,\ell-1}}^2_{\Lp2}\\
\nonumber
&\lesssim \opnorm{Q_g^{\sigma,\ell+1}}_{\Lp2}\,
 \|g\|_{\B^s_{\varphi}}^{s\ell(N-\frac{\ell+1}{\ell})}.
\end{align}

\noindent{\sf Estimate of $B_3$ for $\ell=1$:}
\begin{align*}
&|T_g^{\ell+1}f|\,  |g|^{N\sigma\ell}\,|T_g^{\ell}f|^{N-2}\,|(T_g^{\ell-1}f)'|\left(1+\varphi' \right)^{-1}
\\&=|T_g^2f|\,|g|^{N\sigma}\,|T_gf|^{N-2}\,(|f'|\left(1+\varphi' \right)^{-1})\\
&=|Q_g^{\sigma,2}f|\,|Q_g^{\sigma,1}f|^{N-2}\,(|f'|\left(1+\varphi' \right)^{-1}).
\end{align*}
By applying H\"{o}lder's inequality with exponents $p_1=N$, $p_2=\frac{N}{N-2}$ and $p_3=N$ and using \eqref{eqn:upper:estimate:norm:Q},  we obtain that
\begin{align}
\label{eqn:lem4:sharp:lower:estimate:norm:word110}
 B_3
 &\lesssim \|Q_g^{\sigma,2}f\|_{\Lp2}\,
      \|Q_g^{\sigma,1}f\|^{N-2}_{\Lp2}\,
      \|f\|_{\Ap2} \\
\nonumber
&\lesssim
\opnorm{Q_g^{\sigma,2}}_{\Lp2}\, 
\|g\|_{\B^s_{\varphi}}^{s(N-2)} 
=\opnorm{Q_g^{\sigma,\ell+1}}_{\Lp2}\,
  \|g\|_{\B^s_{\varphi}}^{s\ell(N-\frac{\ell+1}{\ell})}.
\end{align}

\noindent{\sf Estimate of $B_3$ for $\ell>1$:}
\begin{align*}
&|T_g^{\ell+1}f|  |g|^{N\sigma\ell}|T_g^{\ell}f|^{N-2}|(T_g^{\ell-1}f)'|\left(1+\varphi' \right)^{-1}
\\&=|T_g^{\ell+1}f|  |g|^{N\sigma\ell}|T_g^{\ell}f|^{N-2}|g'||T_g^{\ell-2}f|\left(1+\varphi' \right)^{-1}\\
&=(|g|^{\sigma}|g'|\left(1+\varphi' \right)^{-1})|Q_g^{\sigma,\ell+1}f|\,|Q_g^{\sigma,\ell}f|^{N-2}\,|Q_g^{\sigma,\ell-2}f|.
\end{align*}
By applying H\"{o}lder's inequality with exponents $p_1=N$, $p_2=\frac{N}{N-2}$ and $p_3=N$,   and using   \eqref{eqn:upper:estimate:norm:Q}, we obtain that
\begin{align}
\label{eqn:lem4:sharp:lower:estimate:norm:word11}
 B_3
 &\le \|g\|_{\B^s_{\varphi}}^s\,
     \|Q_g^{\sigma,\ell+1}f\|_{\Lp2}\,
      \|Q_g^{\sigma,\ell}f\|^{N-2}_{\Lp2}\,
      \|Q_g^{\sigma,\ell-2}f\|_{\Lp2} \\
\nonumber
&\lesssim
\opnorm{Q_g^{\sigma,\ell+1}}_{\Lp2}\, \|g\|_{\B^s_{\varphi}}^{s\ell(N-2)+s(\ell-2)+s}
\\
\nonumber
&=\opnorm{Q_g^{\sigma,\ell+1}}_{\Lp2}\,
\|g\|_{\B^s_{\varphi}}^{s\ell(N-\frac{\ell+1}{\ell})}.
\end{align}
Finally, estimates \eqref{eqn:lem4:sharp:lower:estimate:norm:word5}-\eqref{eqn:lem4:sharp:lower:estimate:norm:word11} together with the hypotheses
\eqref{eqn:lem4:sharp:lower:estimate:norm:word1} show that
\begin{equation*}
\|g\|_{\B^s_{\varphi}}^{s\ell N}\lesssim\opnorm{Q^{\sigma,\ell}_g}_{\Lp2}^N
\lesssim
\opnorm{Q^{\sigma,\ell+1}_g}_{\Lp2}\,
\|g\|_{\B^s_{\varphi}}^{s\ell(N-\frac{\ell+1}{\ell})},
\end{equation*}
and hence estimate \eqref{eqn:lem4:sharp:lower:estimate:norm:word2} follows, which completes the proof of Proposition~{\ref{lem:sharp:lower:estimate:norm:word:4}}. 
\end{proof}

\subsection{Proof of Corollary~{\ref{cor:main:thm}}}
By Theorem~{\ref{thm:sharp:estimate:norm:word}}, if $L_{g,0}$ is bounded on $\Ap2$ then $g\in\B^{s_0}_{\varphi}$. On the other hand, since $s_j\le s_0$,
  Theorem~{\ref{prop:radicality:estimateintro}} shows that $g\in\B^{s_j}_{\varphi}$, for $j=1,\dots,J$. Then, by applying again Theorem~{\ref{thm:sharp:estimate:norm:word}}, we get 
\begin{equation*}
\|L_g\|_{\Ap2}
\le\sum_{j=0}^J\|L_{g,j}\|_{\Ap2}
\simeq\sum_{j=0}^J\|g\|_{\B^{s_j}_{\varphi}}^{N_j}
\lesssim\sum_{j=0}^J\|g\|_{\B^{s_0}_{\varphi}}^{N_j},
\end{equation*}
so $L_{g}$ is bounded on $\Ap2$.

Conversely, assume that $L_{g}$ is bounded on $\Ap2$. Then 
\begin{align*}
\|g_r\|_{\B^{s_0}_{\varphi}}^{N_0}\simeq\|L_{g_r,0}\|_{\Ap2}
&\lesssim\|L_{g_r}\|_{\Ap2}+\sum_{j=1}^J\|L_{g_r,j}\|_{\Ap2}
\\
&\lesssim\|L_{g_r}\|_{\Ap2}
+\sum_{j=1}^J\|g_r\|_{\B^{s_j}_{\varphi}}^{N_j}
\\
&\lesssim\|L_{g}\|_{\Ap2}
+\sum_{j=1}^J\|g_r\|_{\B^{s_0}_{\varphi}}^{N_j},
\end{align*} 
by Theorems~{\ref{thm:sharp:estimate:norm:word}-\ref{prop:radicality:estimateintro}} and Proposition~{\ref{prop:norm:dilations}}. 
Since $N_j<N_0$, for $j=1,\dots,J$, the above estimate shows that
$\sup_{0<r<1}\|g_r\|_{\B^{s_0}_{\varphi}}<\infty$, so $g\in\B^{s_0}_{\varphi}$, by \eqref{eqn:radialized:symbol:Bloch}, and therefore $L_{g,0}$ is bounded on $\Ap2$.

\section{Examples}\label{examples}

In the next result we provide simple conditions  on the function $\varphi$ which guarantee that the weight $\omega(z)=e^{-2\varphi(z)}$ lies in $\SW$.
\begin{proposition}\label{prop:example1}	
	Let $\varphi$ be a positive increasing $C^2$ function on $[0,1)$ satisfying the following conditions:
	\begin{enumerate}[label={\sf(\roman*)}, topsep=3pt, leftmargin=32pt,itemsep=3pt] 
		\item\label{item:111}  
$\lim_{r\to 1^-}\varphi(r)=\lim_{r\to 1^-}\varphi'(r)=\infty$.
		\item\label{item:113} $\varphi'$ is increasing, $\varphi'(0)=0$, and $\varphi''(0)>0$.
		\item\label{item:114}  There exists $\delta>0$ and a positive decreasing  $C^1$ function $\phi$ on $[0,1)$ so that $\tau(r)=(1+\varphi'(r))^{-\delta}\phi(r)$ satisfies 
		$\tau(r)^{-2}\simeq\varphi''(r)+\varphi'(r)$ and  $\lim_{r\to 1^-}\tau'(r)=
         \lim_{r\to 1^-}\tau'(r)\log\tau(r)=0$.
	\end{enumerate}  
	Then $\varphi$ extends to a radial function on $\D$ \textup{(}which we continue calling $\varphi$\textup{)} such that $\omega(z)=e^{-2\varphi(z)}$  is a smooth rapidly decreasing weight.
\end{proposition}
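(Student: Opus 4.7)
The plan is to verify each of the four defining conditions \ref{rapidly:decreasing:condition:a}--\ref{rapidly:decreasing:condition:e} of $\SW$ for the radial extension $\varphi(z):=\varphi(|z|)$ and the auxiliary function $\tau$ supplied by hypothesis \ref{item:114}. First I would check that $z\mapsto\varphi(|z|)$ is a $C^2$ function on $\D$. The only delicate point is the origin, where $\varphi'(0)=0$ forces the first partials to vanish and, together with $\lim_{r\to 0^+}\varphi'(r)/r=\varphi''(0)$ (L'H\^opital plus continuity of $\varphi''$), ensures that the second partials extend continuously. Condition \ref{rapidly:decreasing:condition:a} then follows: positivity and monotonicity are provided by \ref{item:111}--\ref{item:113}, and $\Delta\varphi(z)=\varphi''(r)+\varphi'(r)/r$ is strictly positive because $\varphi''(0)>0$ together with the monotonicity of $\varphi'$ forces $\varphi'(r)>0$ for every $r\in(0,1)$, while at the origin $\Delta\varphi(0)=2\varphi''(0)>0$.

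Second, I would verify \ref{rapidly:decreasing:condition:b}. The function $\tau(r)=(1+\varphi'(r))^{-\delta}\phi(r)$ is a product of two positive decreasing $C^1$ functions on $[0,1)$, hence itself positive, decreasing and $C^1$; $\tau(r)\to 0$ as $r\to 1^-$ because $\varphi'(r)\to\infty$ while $\phi\le\phi(0)$, and $\tau'(r)\to 0$ is assumed in \ref{item:114}. To obtain $(\Delta\varphi(z))^{-1/2}\simeq\tau(|z|)$ I would split $[0,1)$ at some $r_0\in(0,1)$: on $[r_0,1)$ the estimate $\varphi'(r)/r\simeq\varphi'(r)$ gives $\Delta\varphi\simeq\varphi''+\varphi'\simeq\tau^{-2}$ via the hypothesis, while on $[0,r_0]$ both $\Delta\varphi$ and $\tau^{-2}$ are positive continuous functions on a compact interval and are therefore comparable.

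For condition \ref{rapidly:decreasing:condition:c} I would use the second alternative \eqref{eqn:condition:tauprime}: since $\log(1/\tau)=-\log\tau$, the hypothesis $\tau'(r)\log\tau(r)\to 0$ is exactly $\tau'(r)\log(1/\tau(r))\to 0$. Finally, for \ref{rapidly:decreasing:condition:e}, convexity of $\varphi$ is immediate from $\varphi''\ge 0$, and for the essential decrease of $(1+\varphi')\tau^\eta$ I would pick any $\eta>1/\delta$; then
\begin{equation*}
(1+\varphi'(r))\,\tau(r)^\eta=(1+\varphi'(r))^{1-\eta\delta}\,\phi(r)^\eta
\end{equation*}
is a product of two positive decreasing functions on $[0,1)$ (the first factor because $1-\eta\delta<0$ and $\varphi'$ is increasing), hence genuinely decreasing, which is stronger than what is needed.

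The whole argument amounts to a systematic checklist, so no serious obstacle is expected; the only subtlety is ensuring that every $\simeq$-comparison is valid on the full interval $[0,1)$ rather than merely near the endpoint, and this is handled by combining the asymptotic hypotheses near $r=1$ with the standard continuity/compactness argument on closed subintervals $[0,r_0]$.
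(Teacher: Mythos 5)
Your proof is correct and follows essentially the same route as the paper: extend radially, check $C^2$-smoothness and positivity of $\Delta\varphi$ at the origin using $\varphi'(0)=0$ and $\varphi''(0)>0$, read off \ref{rapidly:decreasing:condition:b} and \ref{rapidly:decreasing:condition:c} from hypothesis \ref{item:114} (with the comparison $\Delta\varphi\simeq\varphi''+\varphi'$ handled near $1$ and by compactness near $0$), and verify \ref{rapidly:decreasing:condition:e} by computing $(1+\varphi')\tau^{\eta}$ explicitly. The only cosmetic difference is your choice $\eta>1/\delta$ where the paper takes $\eta=1/\delta$, giving $(1+\varphi')\tau^{1/\delta}=\phi^{1/\delta}$ directly; both yield a genuinely decreasing function.
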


\begin{proof}
	Since $\varphi\in C^2_{\R}([0,1))$, its radial extension $\varphi$ is continuous on $\D$  and $C^2$ on $\D\setminus\{0\}$.
	Moreover,  the hypothesis $\varphi'(0)=0$ gives that  
	\begin{equation*}
		\lim_{z\to0}\frac{\partial\varphi}{\partial z}(z)=\lim_{z\to0}\frac{\partial\varphi}{\partial\overline{z}}(z)=
		\lim_{z\to0}\frac{\partial^2\varphi}{\partial z^2}(z)=\lim_{z\to0}\frac{\partial^2\varphi}{\partial\overline{z}^2}(z)=0	
	\end{equation*}
	and 
	\begin{equation*}
		\lim_{z\to0}\frac{\partial^2\varphi}{\partial z\partial\overline{z}}(z)=\tfrac12\varphi''(0).
	\end{equation*}
	Therefore $\varphi\in C^2(\D)$. Now $\Delta\varphi(z)=\varphi''(|z|)+\frac1{|z|}\varphi'(|z|)>0$, for $z\in\D\setminus\{0\}$,
	 and $\Delta\varphi(0)=2\varphi''(0)$  (by \ref{item:113}), so \ref{rapidly:decreasing:condition:a}  holds. 
    Moreover, since $1+\varphi'$ is a positive increasing function on $[0,1)$, $\tau$ is decreasing on $[0,1)$, so  
	 \ref{rapidly:decreasing:condition:b}  and \ref{rapidly:decreasing:condition:c} directly follow from  \ref{item:114}.
	Finally, $\varphi$ is convex (because $\varphi'$ is increasing) and $(1+\varphi')\tau^{1/\delta}=\phi^{1/\delta}$ is decreasing, and therefore \ref{rapidly:decreasing:condition:e} holds.
	And that ends the proof.
\end{proof}

\begin{proposition}\label{prop:example2}
	Let $\varphi$ be a positive increasing $C^2$ function on $[0,1)$ satisfying \eqref{varphi} and conditions 
    \ref{item:111} and \ref{item:113} in the statement of Proposition \ref{prop:example1}.
	Then the function $\psi=e^\varphi$ also satisfies \eqref{varphi} and all the hypotheses of Proposition \ref{prop:example1}.
\end{proposition}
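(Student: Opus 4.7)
The plan is a direct verification that the four conditions of Proposition~\ref{prop:example1} together with \eqref{varphi} transfer from $\varphi$ to $\psi=e^\varphi$. Using
\[
\psi'=\psi\,\varphi',\qquad \psi''=\psi\bigl((\varphi')^2+\varphi''\bigr),
\]
conditions \ref{item:111} and \ref{item:113} for $\psi$ are immediate: $\psi,\psi'\to\infty$ because $\varphi,\varphi'\to\infty$; $\psi''>0$ on $[0,1)$ since $\varphi''\ge 0$ and $(\varphi')^2\ge 0$ (with $\varphi''(0)>0$ preventing the degenerate case); $\psi'(0)=\psi(0)\varphi'(0)=0$; and $\psi''(0)=\psi(0)\varphi''(0)>0$. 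For \eqref{varphi} applied to $\psi$, I would write
\[
\frac{\psi''\psi}{(1+\psi')^2}=\frac{\psi^2\bigl((\varphi')^2+\varphi''\bigr)}{(1+\psi\varphi')^2},
\]
and note that \eqref{varphi} for $\varphi$ forces $\varphi''/(\varphi')^2\le C/\varphi\to 0$, so the right-hand side tends to $1$ as $r\to 1^-$; combined with continuity on $[0,1)$ the quotient is bounded.

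For condition \ref{item:114}, I propose the choice $\tilde\delta=\tfrac12$ and $\tilde\phi(r)=(1+\varphi'(r))^{-1/2}$, which is positive, decreasing (since $\varphi'$ is increasing), and $C^1$ on $[0,1)$. Then
\[
\tilde\tau(r)=(1+\psi'(r))^{-1/2}(1+\varphi'(r))^{-1/2}.
\]
The comparison $\tilde\tau^{-2}\simeq \psi''+\psi'$ comes from the asymptotic identities $1+\psi'\simeq\psi\varphi'$ and $\psi''+\psi'\simeq\psi(\varphi')^2$ as $r\to 1^-$ (again using $\varphi''\ll(\varphi')^2$), together with the fact that both sides are continuous and strictly positive on $[0,1)$ and their ratio has a positive finite limit at both endpoints of any neighborhood.

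The main obstacle is checking the limits $\lim_{r\to 1^-}\tilde\tau'(r)=\lim_{r\to 1^-}\tilde\tau'(r)\log\tilde\tau(r)=0$. I would use the logarithmic derivative
\[
\frac{\tilde\tau'(r)}{\tilde\tau(r)}=-\tfrac12\Bigl(\tfrac{\psi''}{1+\psi'}+\tfrac{\varphi''}{1+\varphi'}\Bigr),
\]
and the estimates (for $r$ close to $1$)
\[
\tilde\tau\simeq \psi^{-1/2}(\varphi')^{-1},\quad \tfrac{\psi''}{1+\psi'}\simeq \varphi',\quad \tfrac{\varphi''}{1+\varphi'}\le C\,\tfrac{\varphi'}{\varphi},
\]
where the last bound uses \eqref{varphi} for $\varphi$. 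Multiplying through yields $|\tilde\tau'|\lesssim \psi^{-1/2}$, which goes to $0$. For the second limit, $|\log\tilde\tau|\lesssim \varphi+\log\varphi'\lesssim \varphi$ near $1$, so $|\tilde\tau'\log\tilde\tau|\lesssim e^{-\varphi/2}\varphi\to 0$. This is the step where the exponential growth of $\psi$ is crucial: it kills the slower divergence of $\log\tilde\tau$, whereas only polynomial decay of $\tilde\tau$ would not suffice. Collecting everything, $\psi$ satisfies \eqref{varphi} and all hypotheses of Proposition~\ref{prop:example1}.
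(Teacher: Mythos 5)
Your proposal is correct and follows essentially the same route as the paper: the same formulas $\psi'=\psi\varphi'$, $\psi''=\psi(\varphi''+(\varphi')^2)$, the same choice $\tau_\psi=(1+\psi')^{-1/2}(1+\varphi')^{-1/2}$ with $\tilde\phi=(1+\varphi')^{-1/2}$, the same key comparisons $1+\psi'\simeq\psi(1+\varphi')$ and $\varphi''+(\varphi')^2\simeq(1+\varphi')^2$, and the same bound $|\tau_\psi'|\lesssim\psi^{-1/2}$. The only cosmetic differences are that you differentiate via the logarithmic derivative and assert $\log(1+\varphi')\lesssim\varphi$ directly (which does follow from \eqref{varphi}), whereas the paper reaches the same conclusion by an explicit L'H\^{o}pital computation for $e^{-\varphi/2}\log(1+\varphi')$.
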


\begin{proof}
	It is clear that $\psi$ is a positive increasing $C^2$ function on $[0,1)$ such that $\psi(r)\to\infty$, as $r\to1^{-}$. Moreover, $\psi'=\varphi'\psi$, $\psi''=\psi(\varphi''+(\varphi')^2)$, and, in particular,
    $\psi'$ is increasing, $\psi'(r)\to\infty$, as $r\to1^{-}$,
	$\psi'(0)=0$ and  $\psi''(0)=\psi(0)\varphi''(0)>0$, so $\psi$ satisfies \ref{item:111} and \ref{item:113}.
	
	By \eqref{varphi} and the fact that $\varphi$ is a positive increasing function, we have that $\varphi''(r)+\varphi'(r)^2\lesssim (1+\varphi'(r))^2$. On the other hand, by \ref{item:111} and  \ref{item:113}, 
	$(1+\varphi'(r))^2\simeq 1+\varphi'(r)^2\lesssim\varphi''(r)+\varphi'(r)^2$. Therefore 
\begin{equation}\label{eqn:pre:laplacian:varphi}
\varphi''(r)+\varphi'(r)^2\simeq (1+\varphi'(r))^2,
\end{equation}
and, in particular, 
\begin{equation*}
\psi''(r)+\psi'(r)\simeq\psi(r)(1+\varphi'(r))^2
\qquad\mbox{and}\qquad
\psi''(r)\simeq \psi(r)(1+\varphi'(r))^2.
\end{equation*}
Now the estimate 
\begin{equation}\label{eqn:estimate:1:plus:psiprime}
1+\psi'(r)\simeq\psi(r)(1+\varphi'(r))
\end{equation}
shows that 
\begin{equation}\label{eqn:estimate:psidoubleprime}
\psi''(r)+\psi'(r)\simeq\psi''(r)\simeq(1+\psi'(r))(1+\varphi'(r)),
\end{equation} and so it is clear that
$\tau_{\psi}(r):=(1+\psi'(r))^{-1/2}(1+\varphi'(r))^{-1/2}$
 satisfies
$\tau_{\psi}(r)^{-2}\simeq\psi''(r)+\psi'(r)$. Note that $(1+\varphi')^{-1/2}$ is a positive decreasing  $C^1$ function on $[0,1)$.
Thus in order to show that $\psi$ satisfies \ref{item:114} we only have to check that
\begin{align}
\label{eqn:lim:tauprime}
&\lim_{r\to 1^-}\tau_{\psi}'(r)=0\qquad\mbox{and}
\\
\label{eqn:lim:tauprime:log}
&\lim_{r\to 1^-}\tau_{\psi}'(r)\log\tau_{\psi}(r)=0.
\end{align}
Now \eqref{eqn:estimate:psidoubleprime},
 \eqref{eqn:estimate:1:plus:psiprime}, and \eqref{eqn:pre:laplacian:varphi}  show that
\begin{align*}
-2\tau'_{\psi}(r)
&=\frac{\psi''(r)}{(1+\psi'(r))^{3/2}(1+\varphi'(r))^{1/2}}
  +\frac{\varphi''(r)}{(1+\psi'(r))^{1/2}(1+\varphi'(r))^{3/2}}
\\
&\simeq
 (1+\varphi'(r))^{1/2}(1+\psi'(r))^{-1/2}
 +\varphi''(r)\psi(r)^{-1/2}(1+\varphi'(r))^{-2}
\\
&\lesssim\psi(r)^{-1/2},
\end{align*}
which clearly implies \eqref{eqn:lim:tauprime}.
%
%$\tau'_{\psi}(r)
%=\tau_{\varphi}'(r)\psi(r)^{-1/2}-\frac12\tau_{\varphi}(r)\varphi'(r)\psi(r)^{-1/2}$, \eqref{eqn:lim:tauprime} holds because $\lim_{r\to 1^-}\tau_{\varphi}'(r)\psi(r)^{-1/2}=\lim_{r\to 1^-}\tau_{\varphi}(r)\varphi'(r)\psi(r)^{-1/2}=0$. The last limit is a consequence of the following estimate (which is equivalent to \eqref{eqn:laplacian:varphi}):
%\begin{equation}
%\label{eqn:estimate:tau:varphi}
%\tau_{\varphi}(r)\simeq\frac1{1+\varphi'(r)}.
%\end{equation}
On the other hand, \eqref{eqn:estimate:1:plus:psiprime} implies that
\begin{align*}
-2\log\tau_{\psi}(r)
&=\log(1+\psi'(r))+\log(1+\varphi'(r))
\\
&\lesssim\log\psi(r)+\log(1+\varphi'(r)),
\end{align*}
and so, by the preceding estimate, we obtain that
\begin{equation*}
|\tau_{\psi}'(r)\log\tau_{\psi}(r)|
\lesssim\psi(r)^{-1/2}\log\psi(r)+e^{-\frac12\varphi(r)}\log(1+\varphi'(r)).
\end{equation*}
It is clear that $\lim_{r\to1^{-}}\psi(r)^{-1/2}\log\psi(r)=0$.
Moreover, L'H\^{o}pital's rule gives that 
\begin{equation*}
\lim_{r\to1^{-}}e^{-\frac12\varphi(r)}\log(1+\varphi'(r))=
\lim_{r\to1^{-}}\frac{2\varphi''(r)}{\varphi'(r)(1+\varphi'(r))e^{\frac12\varphi(r)}}=0,
\end{equation*}
since \eqref{varphi} gives the estimate
\begin{equation*}
\frac{\varphi''(r)}{\varphi'(r)(1+\varphi'(r))e^{\frac12\varphi(r)}}
\lesssim\frac1{\varphi(r)e^{\frac12\varphi(r)}}
\qquad(\tfrac12<r<1).
\end{equation*}
Hence \eqref{eqn:lim:tauprime:log} holds.
Finally, $\psi$ satisfies \eqref{varphi} because
\begin{equation*}
\frac{\psi''(r)\psi(r)}{(1+\psi'(r))^2}=
\frac{\psi(r)^2\bigl(\varphi''(r)+\varphi'(r)^2\bigr)}{(1+\psi'(r))^2}\stackrel{\mbox{\tiny$(\ast)$}}{\simeq}
\frac{\varphi''(r)+\varphi'(r)^2}{(1+\varphi'(r))^2}\stackrel{\mbox{\tiny$(\star)$}}{\simeq}1,
\end{equation*}
where $(\ast)$ and $(\star)$ follow from the estimates \eqref{eqn:estimate:1:plus:psiprime} and \eqref{eqn:pre:laplacian:varphi}, respectively.
\end{proof}

Since, for any $\alpha,c>0$,
$\varphi_{\alpha,c}(r)=\frac{c}{(1-r^2)^{\alpha}}$
is a positive increasing $C^2$ function on $[0,1)$ satisfying \eqref{varphi} and conditions 
    \ref{item:111} and \ref{item:113} in the statement of Proposition \ref{prop:example1}, Propositions \ref{prop:example1} and \ref{prop:example2}	together with a straightforward induction argument show the following corollary, which gives weights in $\SW$ decreasing to $0$ 
''exponentially'' as fast as you want.

\begin{corollary}\label{cor:examples}
For any $n\in \N_0$ the radial weight $\omega_n$ defined by \eqref{eqn:examples:intro} belongs to $\SW$. 
\end{corollary}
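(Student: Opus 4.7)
The plan is induction on $n\in\N_0$ using Propositions~\ref{prop:example1} and~\ref{prop:example2} as the engine. Setting $\varphi_n(r):=\exp_n(g_{\alpha,c}(r))$, we have $\omega_n=e^{-2\varphi_n}$ with $\varphi_0=g_{\alpha,c}$ and $\varphi_n=e^{\varphi_{n-1}}$ for $n\ge 1$. The induction hypothesis will be that $\varphi_n$ satisfies \eqref{varphi} together with conditions~\ref{item:111} and~\ref{item:113} of Proposition~\ref{prop:example1}. Given this hypothesis at level $n$, Proposition~\ref{prop:example2} immediately yields that $\varphi_{n+1}=e^{\varphi_n}$ inherits \eqref{varphi}, \ref{item:111}, and \ref{item:113}, and additionally satisfies \ref{item:114}; Proposition~\ref{prop:example1} then gives $\omega_{n+1}\in\SW$, and the induction hypothesis propagates to the next level. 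So everything reduces to the base case.

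For the base case, I would verify \eqref{varphi} and conditions \ref{item:111}, \ref{item:113}, \ref{item:114} directly for $\varphi_0(r)=c/(1-r^2)^\alpha$. Conditions \ref{item:111} and \ref{item:113} are read off from $\varphi_0'(r)=2c\alpha r/(1-r^2)^{\alpha+1}$ and $\varphi_0''(0)=2c\alpha>0$. A short computation gives the asymptotics
\[
\varphi_0(r)\simeq(1-r)^{-\alpha},\quad \varphi_0'(r)\simeq(1-r)^{-(\alpha+1)},\quad \varphi_0''(r)+\varphi_0'(r)\simeq(1-r)^{-(\alpha+2)}
\]
as $r\to 1^-$, from which \eqref{varphi} follows immediately by dividing numerator and denominator. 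For \ref{item:114}, I propose
\[
\delta:=\frac{\alpha+2}{2(\alpha+1)},\qquad \tau(r):=e^{-r}\bigl(1+\varphi_0'(r)\bigr)^{-\delta},\qquad \phi(r):=e^{-r},
\]
so that $\phi$ is positive, $C^1$, and strictly decreasing on $[0,1)$. The displayed asymptotics then yield $\tau(r)^{-2}\simeq\varphi_0''(r)+\varphi_0'(r)$ on $[0,1)$ and $\tau(r)\simeq(1-r)^{(\alpha+2)/2}$ near $1$, which comfortably gives $\tau'(r)\to 0$ and $\tau'(r)\log\tau(r)\to 0$.

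The only non-routine step is choosing $\tau$ and $\phi$ so that all four requirements of~\ref{item:114} are met simultaneously: the naive choice $\tau(r)=(1-r^2)^{(\alpha+2)/2}$ forces $\phi$ to be a non-constant function that need not be monotone, so the factor $e^{-r}$ in my proposal is inserted purely to make $\phi$ strictly decreasing without disturbing the correct order of magnitude of $\tau$. Once the base case is settled, the recursion $\varphi_n=e^{\varphi_{n-1}}$ plus Proposition~\ref{prop:example2} finishes everything mechanically.
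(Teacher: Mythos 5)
Your proof is correct and follows essentially the same route as the paper: verify that $g_{\alpha,c}$ satisfies \eqref{varphi} together with conditions \ref{item:111} and \ref{item:113}, then iterate Propositions~\ref{prop:example1} and~\ref{prop:example2} by induction on $n$. If anything, you are more explicit than the paper about the base case $n=0$, where condition \ref{item:114} must be checked directly for $g_{\alpha,c}$ (the paper subsumes this in the phrase ``straightforward induction argument''); your choice $\delta=\tfrac{\alpha+2}{2(\alpha+1)}$, $\phi(r)=e^{-r}$ does the job correctly.
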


\end{document}